\newtheoremstyle{thm}{1.5ex}{1.5ex}{\itshape\rmfamily}{}
{\bfseries\rmfamily}{}{2ex}{}
\newtheoremstyle{rem}{1.3ex}{1.3ex}{\rmfamily}{}
{\itshape}
{} {1.5ex}{}
\theoremstyle{thm}
\newtheorem{thm}{Theorem}[section]
\newtheorem{cor}[thm]{Corollary}
\newtheorem{lemma}[thm]{Lemma}
\newtheorem{prop}[thm]{Proposition}
\theoremstyle{definition}
\newtheorem{hyp}[thm]{Hypothesis}
\newtheorem{remark}[thm] {Remark}
\newtheorem{defn}[thm]{Definition}
\definecolor{purple}{rgb}{0.65, 0, 1}
\definecolor{orange}{rgb}{1,.5,0}
\newcommand{\e}{\varepsilon}
\newcommand{\pc}[2]{\bar \Omega_{#1}({#2})}
\begin{document}

\title{Hamiltonian ODE's on a Space of Deficient Measures}
\vspace{-10cm}
\author{L.~Chayes$^*$, W.~Gangbo$^{**}$ and H.~K.~Lei$^{ \dagger *}$}
\vspace{-2cm}
\date{}
\maketitle
\vspace{-4mm}
\centerline{${}^*$\textit{Department of Mathematics, UCLA}}
\centerline{${}^{**}$\textit{Department of Mathematics, Georgia Institute of Technology}}
\centerline{${}^\dagger$\textit{Department of Mathematics, California Institute of Technology}}
\abstract{We continue the study (initiated in \cite{wilfrid_ambrosio}) of Borel measures whose time evolution is provided by an interacting Hamiltonian structure.  Here, the principal focus is the development and advancement of \textit{deficency} in the measure caused by displacement of mass to infinity in finite time.
We introduce -- and study in its own right -- a regularization scheme based on a dissipative mechanism which naturally degrades mass according to distance traveled (in phase space).  Our principal results are obtained based on some dynamical considerations in the form of a condition which forbids mass to return from infinity.

\setcounter{tocdepth}{2}
\tableofcontents

%
%
%
%
%
%
%
%

\section*{Introduction}

In this work, we further the study of time evolution for certain Borel measures whose underlying dynamic is provided by a \textit{Hamiltonian} structure.  
In particular, for a given measure $\mu_{t}$ on $D:= 2d$--dimensional \textit{phase space} for a fluid or particle system in $d$--dimensions, one is led to consider a ``Hamiltonian functional'',  $\mathscr H(\mu_t)$, whose gradient with respect to the Wasserstein metric (see Equation \eqref{grad_ham}) provides the velocity field which in turn evolves the measure.  While the mass conserved case has already been considered in e.g., \cite{wilfrid_ambrosio}, our concern in this note is the problem of Hamiltonian dynamics where the kinematics allow for the possibility that particles may reach infinity in finite time.  (As can be readily checked, dynamics subject to an external potential with a super--quadratic drop to negative infinity shall satisfy such a property.)  

Motivated by the fact that mass conserved dynamics can be described by an appropriate continuity equation,  as described in \cite{wilfrid_ambrosio}, we begin by developing a weak theory of a \emph{deficient} continuity equation where mass is degraded according to distance traveled in phase space.  It is worth emphasizing that this is a natural regularization for which degradation of mass is everpresent; moreover, this regularization allows us to look at infinite volume measures from the outset.  On the other hand, it is noted that while the work in \cite{wilfrid_ambrosio} employs the Wasserstein distance -- which is equivalent to vague convergence along with convergence of second moments, our result is much more modest: In the present note we shall content ourselves, ultimately, with distributional convergence, i.e., weak convergence restricted to finite volumes. 

This restriction to finite volumes induces certain dynamical considerations.  Indeed, it is almost a tautology that the dynamics cannot be well--described distributionally without some notion that particles cannot ``return from infinity''.  (Such a condition can be established in a variety of circumstances, the most trivial example being in the case of a radially symmetric potential, where the existence of infinitely many ``outward'' maxima clearly define regions of no return.)  A general version of such a condition, which we refer to as a dynamical hypothesis, will be described in Section \ref{d}.
With such a dynamical hypothesis in hand, we shall indeed be able to extract some limiting dynamics as our regularization parameter tends to zero and establish a mass convergence result as detailed in Section \ref{sec_mass_conv}.

%
%
%
%
%
%
%
%

\section{Preliminaries}

In this section we will introduce the deficient continuity equation and establish some basic properties.

\subsection{Deficient Equation and \emph{a priori} Estimates}\label{sec_def_cont}

\begin{defn}
We will denote by $\mathscr M$ the space of all finite Borel measures generated by the open sets in $\mathbb R^D$.  Given some $\alpha \geq 0$ and a Borel measure $\mu \in \mathscr M$, we define 
\[ M_{\alpha}(\mu) = \mathbb E_{\mu}(e^{\alpha |X|})\]
to be the $\alpha$--exponential moment of $\mu$.  We also let $\mathcal M_{\alpha}$ denote the set of all Borel measures with finite $\alpha$--exponential moment.  
\end{defn}


Before we state a preliminary existence result for fixed velocity fields, we will make the following observation:

\begin{prop}\label{prop:erz}
Let  $T > 0$ and let $v: [0,T] \times \mathbb R^D \rightarrow \mathbb R^D$ be a Borel vector field such that $v_t:=v(t, \cdot)$ is locally Lipschitz for every $t \in [0,T]$.  Let us assume that for every compact $K \subset \mathbb R^D,$ $f_K \in L^1(0,T),$ where 
\[ 
f_K(t)= \sup_{x \in K} |v_t| + \mathrm{Lip}(v_t, K).  
\] 
\begin{itemize}
\item[(i)] Then for all $x$ in $\mathbb R^D$, there exists a $\tau(x)>0$ such that there exists a (unique) solution to the ODE   
\[ \dot X_t = v_t(X_t), ~~~X_0 = x,\] 
on $[0, \tau(x))$. Further, either $\tau(x) = T$ or $t \mapsto |X_t|$ is unbounded on $[0, \tau(x))$, in which case 
\[ \int_0^{\tau(x)} |\dot X_t|~dt = \infty.\]   
\item[(ii)] The function $\tau: \mathbb R^D \rightarrow (0,\infty)$ is lower semicontinuous and for $t \in (0,T)$ the function $X_t$ is one-to-one on  the open set 
\[\mathbb S_t:=\{ x \in \mathbb R^D \mid \tau(x)>t\}.\] 
\end{itemize}
\end{prop}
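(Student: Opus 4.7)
The plan is to treat this as a Carath\'eodory-type ODE whose vector field is locally Lipschitz in space with $L^1$-in-time bounds on compacts, and to assemble each assertion from local Picard existence together with Gr\"onwall estimates. For part (i), fix $x$ and pick $R>0$, setting $K := \overline{B(x,R)}$. Since $f_K \in L^1(0,T)$, choose $\delta \in (0,T]$ with $\int_0^\delta f_K(s)\,ds < R$. On $C([0,\delta];K)$ equipped with a weighted sup-norm $\|Y\|_\lambda := \sup_t \exp(-\lambda \int_0^t f_K(s)\,ds)\,|Y_t|$ for $\lambda$ large, the Picard operator $(\Phi Y)_t := x + \int_0^t v_s(Y_s)\,ds$ is a contraction mapping $K$-valued curves to $K$-valued curves, yielding local existence and uniqueness. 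Define $\tau(x)$ as the supremum of those $s \in (0,T]$ admitting a solution on $[0,s]$; uniqueness concatenates the local pieces into a single solution on $[0,\tau(x))$.

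For the blow-up dichotomy, suppose $\tau(x) < T$ and that $|X_t|$ remains bounded on $[0,\tau(x))$, say $X_t \in K'$ for a compact $K'$. Then $|\dot X_t| \le f_{K'}(t) \in L^1(0,T)$, so $\int_0^{\tau(x)} |\dot X_t|\,dt$ is finite; the Cauchy criterion yields a limit $X_{\tau(x)^-}$, and applying the local existence step at time $\tau(x)$ extends the solution past $\tau(x)$, contradicting maximality. Hence $|X_t|$ must be unbounded on $[0,\tau(x))$, and the elementary bound $|X_t - x| \le \int_0^t |\dot X_s|\,ds$ then forces $\int_0^{\tau(x)} |\dot X_t|\,dt = \infty$.

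For part (ii), lower semicontinuity of $\tau$ reduces to continuous dependence on initial data. Given $t < \tau(x_0)$, the curve $s \mapsto X_s^{x_0}$ lies in a compact $K$ on $[0,t]$; enlarge $K$ to a compact $K'$ containing $K$ in its interior. For $x$ near $x_0$, a Gr\"onwall-plus-bootstrap argument using the integrable bound $\mathrm{Lip}(v_s, K')$ gives $|X_s^x - X_s^{x_0}| \le |x-x_0|\exp\!\bigl(\int_0^t \mathrm{Lip}(v_s, K')\,ds\bigr)$ so long as $X^x$ remains in $K'$; taking $|x-x_0|$ sufficiently small enforces this, yielding $\tau(x) > t$. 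Injectivity of $X_t$ on $\mathbb S_t$ is backward uniqueness: if $x_1, x_2 \in \mathbb S_t$ with $X_t^{x_1} = X_t^{x_2} =: y$, the reversed curves $Y^i_s := X^{x_i}_{t-s}$ both solve $\dot Y_s = -v_{t-s}(Y_s)$ with $Y_0 = y$ and remain in a common compact set on $[0,t]$; Gr\"onwall applied to $|Y^1 - Y^2|$ forces $Y^1 \equiv Y^2$ and hence $x_1 = x_2$.

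The step most in need of care is the blow-up alternative -- ruling out the scenario in which $\tau(x) < T$ while $|X_t|$ stays bounded -- as it is precisely here that the interplay between local Lipschitzness and the $L^1$-integrability of $f_K$ is essential, and where one must invoke continuity of the indefinite integral to restart the flow at time $\tau(x)$. The remaining arguments are routine applications of the contraction principle and Gr\"onwall's inequality adapted to $L^1$-in-time Lipschitz data.
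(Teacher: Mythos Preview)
Your proposal is correct and follows essentially the same route as the paper's proof: local Carath\'eodory existence (which the paper simply attributes to ``standard ODE theory'' while you sketch the weighted-norm Picard argument), the same blow-up alternative via extension of the integral identity to $\tau(x)$, a Gr\"onwall-plus-bootstrap for lower semicontinuity of $\tau$, and time reversal for injectivity. The only organizational difference is that the paper packages the continuous-dependence step by first proving that the sets $\mathbb B_t(b)=\{x:\tau(x)>t,\ X(x,\cdot)[0,t]\subset D_b\}$ are open, which is exactly your bootstrap argument in disguise.
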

\begin{proof}
For each $x$, existence up to some maximal time $\tau(x)$ follows by standard ODE theory given the assumption on $v_t$.  Furthermore, the solution trajectories are continuous.  If $\tau(x)<T$ and $t \mapsto |X_t|$ is bounded on $[0, \tau(x))$  then $t \mapsto X_t$ is Lipschitz on that set and so, it admits a left limit $X_{\tau(x)}$ at $\tau(x).$ Then the identity $X_t=X_0+ \int_0^t v_s \circ X_s~ds$ can first be extended to $t=\tau(x)$ and then  beyond. In particular, if $|X_{\tau(x)}|$ is bounded, then we could extend the solution of the ODE to a larger interval, contradicting the maximality of $\tau(x)$. 

Let $D_b$ denote the ball of radius $b$ around the origin, and for $t<T$ set 
\[ \mathbb B_t(b) = \{x \in \mathbb R^D \mid \tau(x)>t, X(x, \cdot)[0,t] \subset D_b\}.\] 
We shall next demonstrate that 
$\mathbb B_{t}(b)$ is an open set.  Let $x \in \mathbb B_t(b)$ and $y$ satisfying $|x - y| < \delta$ with $\delta$ to be specified momentarily: For $L > 0$ suppose that $\delta$ has been chosen so that 
\begin{equation}\label{eq:conditionBb1} \delta < L e^{-\int_0^t f_{D_{b + L}}(s)~ds}. \end{equation}
We claim that under these conditions, $y \in \mathbb B_{t}(b+L)$.  Indeed, clearly $y \in \mathbb B_{\vartheta}(b+L)$ for $\vartheta > 0$ sufficiently small; let us suppose that $\vartheta$ is maximal and assume towards a contradiction that $\vartheta < t$.  Letting $r(t): = |X_t(x) - X_t(y)|$ we directly see that for any $t' < \vartheta$, 
\[ \frac{dr}{dt} (t') \leq  r(t') f_{D_{b+L}}(t').\]
Thus, certainly, by Gronwall's inequality $r(t') < L$ for any $t' \leq \vartheta$, but continuity then also implies that $r(\vartheta + \eta) < L$ for some sufficiently small $\eta > 0$, contradicting the maximality of $\vartheta$.  We conclude that $\vartheta \geq t$. So $y \in \mathbb B_{t}(b+L)$.  Now, since $X(x, \cdot)[0, t]$ is compact we have that $x \in \mathbb B_{t}(b')$  for some $b' < b$; finally, choosing $L = b- b'$ we have $y \in \mathbb B_t(b)$ and we have established that $\mathbb B_t(b)$ is an open set.  

As for the semicontinuity, let  $x \in \mathbb R^D$ and choose an arbitrary positive number $a$ smaller than $\tau(x)$ and then choose $t \in (\tau(x)-a, \tau(x))$.   As $X(x, \cdot)[0,t]$ is a compact set, it is contained in a ball $D_{b}$. Choose $L=1$, say, and $\delta$ -- with $\delta$ as in  Equation (\ref{eq:conditionBb1}) (with $\delta \ll 1$). Since 
$D_{\delta}(x)$, the open ball of center $x$ and radius $\delta$, is contained in $\mathbb B_t(b+1)$ and we have $\tau \geq t> \tau(x)-a$ on $D_{\delta}(x)$. This proves that $\liminf_{y \rightarrow x} \tau(y) \geq \tau(x)-a. $ As $a$ is arbitrary we conclude that $\liminf_{y \rightarrow x} \tau(y) \geq \tau(x).$ Thus $\tau$ is lower semicontinuous
 and, moreover, it follows that $\mathbb S_t$ an open set. 

Finally, if $x,y \in \mathbb S_t$ are such that $X_t(x)=X_t(y)$ then the functions $s \rightarrow r_x(s):=X_{t-s}(x)$ and $s \rightarrow r_y(s):=X_{t-s}(y)$ satisfy 
\[r_x(0)=r_y(0), \quad \dot r_x= w_s(r_x), \quad \dot r_y= w_s(r_y), \qquad w_s:=-v_{t-s}\]
on $[0,t]$. Hence, $r_x(t)=r_y(t)$ i.e. $x=y,$ which proves that $X_t$ is one-to-one on $\mathbb S_t.$ 
\end{proof}

\begin{remark}\label{measurability} Let $v$ denote a velocity field satisfying the hypothesis of Proposition \ref{prop:erz}. 

(i) To ensure that $f_K$ is Borel measurable it suffices to assume that $v(\cdot, x)$ is Borel measurable for every $x \in  \mathbb R^D.$ 

(ii) Let $\mathcal O$ be the set of $(x,t)$ such that $0<t<\tau(x)$. The lower semicontinuity of $\tau$ ensures that $\mathcal O$ is an open subset of $\mathbb R^D \times (0,\infty).$ Also, $(x,t) \rightarrow X_t(x)$ is continuous on $\mathcal O$.
\end{remark}




%
%
\begin{prop}\label{preserv_exp}
For fixed $\alpha_0 > 0$, $\alpha > 0$, and $0 < T < \infty$, let $\mu_0 \in \mathcal M_{\alpha_0}$ denote some initial Borel measure on $\mathbb R^D$, and let $v_t$ denote a velocity field satisfying the hypothesis of Proposition \ref{prop:erz}.  Then 

\begin{itemize}
\item[(i)] there exists $(\mu_t)_{t \in [0, T]}$ such that 
\begin{equation} \label{partial_cont} \partial_t \mu_t + \nabla \cdot (v_t \mu_t) = -\e |v_t| \mu_t,\end{equation}
in the sense of distribution:
\[ \forall \varphi \in C_c^\infty(\mathbb R^D \times (0, T)),~~~\int_0^T \int_{\mathbb R^D} (\partial_t \varphi + \langle v_t, \nabla \varphi \rangle)~d\mu_t~dt = -\int_0^T \int_{\mathbb R^D} \e |v_t| ~\varphi ~d\mu_t~dt;\]  

\item[(ii)] the measure $\mu_t$ is supported by $X_t(\mathbb S_t)$ and 
$$ 
\int_{\mathbb R^D} \psi ~d\mu_t= \int_{\mathbb S_t} (R_t \psi) \circ X_t   ~d\mu_0,
$$
where $R_t$ is defined as in Equation \eqref{defn_Rt};

\item[(iii)] furthermore, if $\alpha \leq \min\{\alpha_0, \e\}$, $M_{\alpha}(\mu_t)$ is monotonically nonincreasing in $t$. In particular the total mass $M_{0}(\mu_t)$  is monotonically nonincreasing in $t$. 
\end{itemize}
\end{prop}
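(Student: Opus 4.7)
The plan is to define $\mu_t$ directly by the pushforward-with-damping formula of (ii) and then verify (i) and (iii) by direct computation. I would take
\[ R_t(x) := \exp\!\Bigl(-\varepsilon \int_0^t |v_s(X_s(x))|\,ds\Bigr), \qquad x \in \mathbb S_t, \]
(matching \eqref{defn_Rt}) and set
\[ \int_{\mathbb R^D} \psi\,d\mu_t := \int_{\mathbb S_t} R_t(x)\,\psi(X_t(x))\,d\mu_0(x) \]
for every bounded Borel $\psi$. Lower semicontinuity of $\tau$ makes $\mathbb S_t$ open (hence Borel), and joint continuity of $(x,s)\mapsto X_s(x)$ on $\mathcal O$ (Remark \ref{measurability}(ii)) ensures $R_t$ and $\psi\circ X_t$ are Borel on $\mathbb S_t$. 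Since $R_t \le 1$ this yields a finite Borel measure whose support lies in $X_t(\mathbb S_t)$, which proves (ii) immediately because $X_t$ is one-to-one on $\mathbb S_t$ by Proposition \ref{prop:erz}(ii).

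For (i), pick $\varphi \in C_c^\infty(\mathbb R^D \times (0,T))$, swap orders of integration via Fubini, and apply the fundamental theorem of calculus to
\[ \frac{d}{dt}\bigl[R_t(x)\,\varphi(X_t(x), t)\bigr] = R_t(x)\bigl(\partial_t \varphi + \langle v_t, \nabla \varphi\rangle\bigr)(X_t(x),t) - \varepsilon\,R_t(x)\,|v_t(X_t(x))|\,\varphi(X_t(x),t). \]
Integrating over $t \in [0, \tau(x)) \cap [0,T]$ and then over $d\mu_0(x)$ yields, after rearrangement, the distributional form of \eqref{partial_cont}. The main technical obstacle is ruling out a boundary contribution at $t = \tau(x)$ when $\tau(x) < T$; this is precisely where Proposition \ref{prop:erz}(i) is essential, since it states that $|X_t(x)|$ is unbounded on $[0,\tau(x))$, forcing $\varphi(X_t(x),t)=0$ for $t$ close enough to $\tau(x)$. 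In other words, particles leaving $\mathbb S_t$ escape to infinity and thus leave the spatial support of $\varphi$, so no spurious source/sink term arises from the moving domain $\mathbb S_t$. The boundary contributions at $t=0,T$ vanish trivially by compactness of the time support of $\varphi$.

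For (iii), set $F_t(x) := e^{\alpha |X_t(x)|} R_t(x)$, so that $M_\alpha(\mu_t) = \int_{\mathbb S_t} F_t\,d\mu_0$. Differentiating and using $\tfrac{d}{dt}|X_t| \le |\dot X_t| = |v_t(X_t)|$ together with $\dot R_t = -\varepsilon\,|v_t(X_t)|\,R_t$ gives
\[ \frac{dF_t}{dt}(x) \le (\alpha-\varepsilon)\,|v_t(X_t(x))|\,F_t(x) \le 0 \]
whenever $\alpha \le \varepsilon$. Since $\{\mathbb S_t\}$ is nested downward in $t$, for $s<t$
\[ M_\alpha(\mu_t) = \int_{\mathbb S_t} F_t\,d\mu_0 \le \int_{\mathbb S_t} F_s\,d\mu_0 \le \int_{\mathbb S_s} F_s\,d\mu_0 = M_\alpha(\mu_s), \]
where the pointwise bound $F_0(x) = e^{\alpha |x|} \le e^{\alpha_0 |x|}$ (using $\alpha \le \alpha_0$) provides the integrability needed to justify the monotone pointwise estimate. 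Specializing $\alpha = 0$ (so that $F_t = R_t \le 1$) gives the total mass statement.
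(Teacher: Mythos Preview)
Your proof is correct, but it takes a genuinely different route from the paper's.

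The paper proceeds by truncation: it introduces smooth cutoffs $\phi^b:\mathbb R^D\to D_{b+1}$, replaces $v$ by the globally bounded field $v^b=v\circ\phi^b$, and builds approximate measures $\mu_t^b = R_t^b\,(X_t^b\#\mu_0)$ whose flows exist for all time. It then verifies the deficient continuity equation at the truncated level (where everything is compactly supported and manipulations are unproblematic) and passes to the limit $b\to\infty$, using that $X_t^b=X_t$ on $\mathbb B_t(b)$ and that $R_t^b\circ X_t^b\to 0$ on $\{t\ge\tau(x)\}$. The moment monotonicity (iii) is likewise proved first for $\mu_t^b$ via the differential identity and then transferred by dominated convergence.

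You instead define $\mu_t$ directly by the representation formula and differentiate $t\mapsto R_t(x)\varphi(X_t(x),t)$ under the integral. The crucial step---controlling the boundary contribution at $t=\tau(x)$---you handle by invoking Proposition~\ref{prop:erz}(i): since $|X_t(x)|\to\infty$ as $t\uparrow\tau(x)$ (in fact $\int_0^{\tau(x)}|\dot X_s|\,ds=\infty$ already forces $R_t(x)\to 0$, which would suffice even without the spatial escape), the compactly supported $\varphi$ sees nothing there. Your argument for (iii) is the pointwise inequality $\partial_t F_t\le 0$ combined with the nesting $\mathbb S_t\subset\mathbb S_s$, which is cleaner than the paper's route through the truncated moments.

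What each buys: the paper's truncation is more robust---it sidesteps any direct analysis of the flow near blow-up and makes the measurability and dominated-convergence bookkeeping mechanical. Your approach is more elementary and transparent, but it leans on the qualitative behaviour of $X_t$ at $\tau(x)$, which you correctly extract from Proposition~\ref{prop:erz}. Both are valid; yours is arguably the more natural reading of the formula in (ii).
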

\begin{proof}
For $b$ be a positive integer we select a map $\phi^b: \mathbb R^D \rightarrow D_{b+1}$ of class $C^2$  of Lipschitz constant less than or equal to $b$ such that $\phi^b(x)=x$ for $|x| \leq b,$  and $\phi^b(x)=0$ for $|x| \geq 2+b.$ Let $X_t^b$ be the solution   of the ODE as described in Proposition \ref{prop:erz} when $v$ is substituted by $v^b= v \circ \phi^b.$ Note that  as 
\[|\dot X_t^b| \leq f_{D_{b+1}}, \quad  f_{D_{b+1}} \in L^1(0,T),\] 
$X^b_t$ exists for all $t \in [0,T]$ and  is invertible.  
Let $\mathbb B_t(b)$ be as above and observe that 
\begin{equation}\label{eq:flowsconcide} 
x \in  \mathbb B_t(b), \quad s \in [0,t] \quad \implies \quad X^b_s(x)= X_s(x).
\end{equation} 
Indeed, for $x \in  \mathbb B_t(b)$ and $s \leq t$ 
\[X_s(x)=x + \int_0^s v_\tau (X_\tau(x))d\tau=x + \int_0^s v_\tau^b (X_\tau(x))d\tau . \]
Set 
$$
\mu_t^{*, b}= X^b_{t} \# \mu_0, \quad \mu_t^b = R_t^b\mu_t^{*, b} 
$$ 
where $R^b_t$ is defined on $[0,T] \times \mathbb R^D$ by 
$$
R^b_t \circ X^b_t = \exp\Bigl(-\int_0^t \e | \dot X^b_s|~d\tau \Bigr).
$$
Similarly, for $t<T$ and $x \in \mathbb S_t$ we define 
\begin{equation}\label{defn_Rt}
R_t \circ X_t = \exp\Bigl(-\int_0^t \e | \dot X_s|~d\tau \Bigr).
\end{equation}
As 
\[ \partial_t \mu^b_t + \nabla \cdot (v^b_t R^b_t) = -\e |v^b_t| R^b_t \quad \hbox{and} \quad 
\partial_t \mu^{*, b}_t + \nabla \cdot (v^b_t \mu^{*, b}_t) =0\]  
in the sense of distribution on $(0,T) \times \mathbb R^D$, 
\begin{equation} \label{partial_contb}  \partial_t \mu^b_t + \nabla \cdot (v^b_t \mu^b_t) = -\e |v^b_t| \mu^b_t\end{equation}
in the sense of distribution on $(0,T) \times \mathbb R^D.$ We use Equation (\ref{eq:flowsconcide}) to observe for $t<T$ 
\begin{equation}\label{eq:flimitofRb} 
\lim_{b \rightarrow \infty} R^b_t \circ X^b_t (x)= 
 \begin{cases} R_t \circ X_t (x) & \text{if $t< \tau(x)$,}
\\
0 & \text{if $t \geq \tau(x)$}
\end{cases}.  
\end{equation} 
As $(t,x) \rightarrow  R^b_t \circ X^b_t (x)$ is continuous the limiting function in Equation (\ref{eq:flimitofRb}) is a Borel function. For $t \in [0,T)$ we define the measure $\mu_t$ supported by $X_t(\mathbb S_t)$ by 
$$
\int_{\mathbb R^D} \psi d\mu_t= \int_{\mathbb S_t} (R_t \psi) \circ X_t   ~d\mu_0
$$ 
for $\psi \in C_b(\mathbb R^D).$  By Equations (\ref{eq:flowsconcide}) and (\ref{eq:flimitofRb}) 
\begin{equation}\label{eq:flimitofmub} 
\lim_{b \rightarrow \infty} \int_0^T \Bigl( \int_{\mathbb R^D} \varphi(t,y) ~d\mu_t^b(y) \Bigr) ~dt=   
\int_0^T \Bigl( \int_{\mathbb R^D} \varphi(t,y) ~d\mu_t (y) \Bigr) ~dt 
\end{equation} 
for $\varphi \in C_b([0,T] \times \mathbb R^D).$

{\bf 1.} Claim: $(\mu)_{t \in [0, T]}$ satisfies Equation (\ref{partial_cont}).

{\it \hskip 0.2in Proof of claim $1$.} In light of Equation (\ref{partial_contb}) it suffices to show for arbitrary $\varphi \in C_c^1((0,T) \times \mathbb R^D)$ that  
\[ \lim_{b \rightarrow \infty} \int_0^T \Bigl( \int_{\mathbb R^D} \langle \nabla \varphi, v^b \rangle ~d\mu_t^b(y) \Bigr) dt=   
\int_0^T \Bigl( \int_{\mathbb R^D} \langle \nabla \varphi, v \rangle ~d\mu_t (y) \Bigr) ~dt \] 
and 
\[ \lim_{b \rightarrow \infty} \int_0^T \Bigl( \int_{\mathbb R^D}  \varphi |v^b| ~d\mu_t^b(y) \Bigr) ~dt=   
\int_0^T \Bigl( \int_{\mathbb R^D}  \varphi | v | ~d\mu_t (y) \Bigr) ~dt. \]  
Let $r > 0$ be chosen so that -- say -- the set $[r,T-r]\times B_{1/r}$ contains the support of $\varphi$.  Now let $\omega$ be such that $|v| < \omega$ on $[r,T-r]\times B_{1/r}$.  Then once $b > \omega$, we have $\langle \nabla \varphi, v^b \rangle=\langle \nabla \varphi, v \rangle$ and so, Equation (\ref{eq:flimitofmub}) yields the first identity of the claim. We obtain the second identity in a similar manner.

In the above, the left hand sides actually \emph{equal} the right hand sides once $b$ is large enough so that $D_b$ subsumes the support of $\varphi$ since then $v^b = v$ and we may apply Equation \eqref{eq:flimitofmub}.

{\bf 2.} Claim: Let $\alpha \leq \min\{\alpha_0, \e\}$. Then $M_{\infty, \alpha}(\mu_t)$ is monotonically nonincreasing in $t$

{\it \hskip 0.2in Proof of claim $2$.} Let $0 \leq t_1<t_2 \leq T$. As  $v^b$ is of compact support Equation (\ref{partial_contb}) implies 
\begin{equation} \label{partial_cont.ter} 
{d \over dt} \int \psi ~d\mu_t^b= \int (\langle \nabla \psi, v^b_t \rangle -\e \psi |v^b_t|) ~d \mu_t.\end{equation}
so with $\psi = e^{\alpha |x|}$, we get  
\[ {d \over dt} M_{\infty, \alpha}(\mu_t^b)=\int |v^b_t| e^{\alpha |y|} \Bigl( \alpha \langle  {y\over |y|}, {v^b_t\over |v^b_t|} \rangle -\e \Bigr) ~d\mu_t \leq 0\] 
holds if $\alpha \leq \e$, and hence, 
\begin{equation} \label{partial_cont.bis} 
M_{\infty, \alpha}(\mu_{t_2}^b) \leq M_{\infty, \alpha}(\mu_{t_1}^b).
\end{equation}
As  $\alpha \leq \e$ we have 
\[  R^b_t \circ X^b_t (x)  \leq   
\exp -\alpha \Bigl(  \int_0^t  |\dot X^b_s(x)| ~ds \Bigr) 
 \]
and so, the inequality  
\[ |X^b_t(x)-x| \leq \int_0^t |\dot X^b_s(x)| ~ds\] yields  
\[  R^b_t \circ X^b_t (x) e^{\alpha |X^b_t (x)|} \leq   e^{\alpha |x|} 
 \]
Since $x \rightarrow \exp(\alpha |x|)$ belongs to $L^1(\mu_0)$, we may use  Equations (\ref{eq:flowsconcide}) and (\ref{eq:flimitofRb}) and apply the Lebesgue dominated convergence theorem to obtain that 
\[  
\lim_{b \rightarrow \infty} \int_{\mathbb R^D} R^b_t \circ X^b_t (x) e^{\alpha |X^b_t (x)|} ~d\mu_0(x)= 
\int_{\mathbb S_t} R_t \circ X_t (x) e^{\alpha |X_t (x)|} ~d\mu_0(x)
\] 
which shows that $M_{\infty, \alpha}(\mu_{t}^b)$ tends to $M_{\infty, \alpha}(\mu_{t})$ as $b$ tends to $\infty.$ This together with (\ref{partial_cont.bis}) proves the claim. 
\end{proof}
%
%
\begin{remark}\label{re:bounded_moment} 
We make some remarks on some (almost) automatic extensions of these results.

(i) The fact that the $\alpha$-exponential moment of the solution of (\ref{partial_cont}) decreases in time ensures that we can repeat Proposition \ref{preserv_exp} on the interval $[T,2T]$, $\cdots$, $[nT,(n+1)T]$ to obtain that Equation (\ref{partial_cont}) is satisfied on $[0,\infty) \times \mathbb R^D.$

(ii) If $v$ is a velocity field satisfying the hypothesis of Proposition \ref{prop:erz} and only the $m^{\text{th}}$ moment of $\mu_0$ is finite (i.e., $\mu_0$ may not have an $\alpha$--exponential moment) then the $m^{\text{th}}$ moment of $\mu_t$ stays bounded on $[0,T).$ (In the particular case where $m=0$ the total mass of $\mu_t$ is less than or equal to that of $\mu_0$.)  Indeed,  let us define 
\[ S_t(x) = \int_0^t |\dot X_\tau(x)|~d\tau. \]
Then 
\[ \int_{\mathbb R^D} |y|^m~d\mu_t(y) =\int_{\mathbb S_t} e^{-\e S_t(x)} |X_t (x)|^m ~d\mu_0(x)
\leq \int e^{-\e S_t(x)} (|x| + S_t(x))^m ~d\mu_0 \]
Now we divide the integral into $\{x: S_t(x) < |x|\}$ and $\{x: S_t(x) > |x|\}$.  The integral over the former region is bounded by $2^m$ times the initial moment, whereas the integral over the latter region is bounded by a constant depending on $\e.$  

(iii) Assume $\alpha$, $v$ and $\mu_0$ are as in Proposition \ref{preserv_exp}. Let $t \rightarrow \mu_t$ be the solution obtained in that proposition and let $\varphi \in C^1(\mathbb R^d)$.  Then we claim, that after some computations along the lines of the proof of said Proposition, that 
\begin{equation}\label{e:cont_path1}
\left|\int_{\mathbb R^D} \varphi ~d\mu_{t_2}-\int_{\mathbb R^D} \varphi ~d\mu_{t_1}\right| \leq (1+\e) \|\varphi\|_{C^1}\int_{t_1}^{t_2} \int_{\mathbb R^D} |v_s| ~d\mu_s
ds.
\end{equation} 
Indeed, we can first obtain the inequality in Equation (\ref{e:cont_path1}) for $\varphi \in C^1_c(\mathbb R^D).$  An approximation argument then yields the general case.
\end{remark} 
\subsection{Limiting Measures and Equations}\label{sec_lim_general}

Let $\mu_t^\varepsilon$ denote a (distributional) solution to the deficient continuity equation 
\begin{equation}\label{deficient_cont_eq} \partial_t\mu_t^\e + \nabla \cdot (\mu_t^\e v_t^\e) = -\varepsilon|v_t^\e|\mu_t^\e.\end{equation}
We will now establish existence of the necessary $\varepsilon \rightarrow 0$ limiting measures. We remark that here we will retrieve the limit abstractly, making no statement about the limiting dynamics.  We will address \emph{Hamiltonian} dynamics in the following section.

As before, we denote by $X_t$ the characteristic in Equation \eqref{deficient_cont_eq}: $\dot X_t = v_t(X_t)$, $X_0 = \text{Id}$. 

%
\begin{remark}\label{dist_narrow}
We point out that in this section and the next, we will use the weakest form of convergence of measures: Distributional convergence, i.e., $\mu_n \rightharpoonup \mu$ if 
\[\forall \varphi \in C_c^\infty(\mathbb R^D), ~~~ \lim_{n\rightarrow \infty} \int \varphi~d\mu_n = \int \varphi~d\mu.\] 
However,  if $\{\int \psi~d\mu_n\}_{n \in \mathbb N}$ is bounded for some nonnegative $\psi \in C(\mathbb R^D)$ such that $\psi(x)$ tends uniformly to $\infty$ as $|x|$ tends to $\infty$ (e.g., a moment condition) then distributional convergence is equivalent to narrow convergence which is defined as $\mu_n \underset{^n}{\rightharpoonup}\mu$ if
\[ \forall \varphi \in C_b(\mathbb R^D),~~~ \lim_{n\rightarrow \infty} \int \varphi~d\mu_n = \int \varphi~d\mu.\]  
So we will often (automatically) acquire narrow convergence but utilize $C_c^\infty$ functions to carry out the relevant arguments. 

We shall also use weak$^\ast$ convergence which is defined as $\mu_n \underset{^\ast}{\rightharpoonup} \mu$ if
\[ \forall \varphi \in C_0(\mathbb R^D),~~~ \lim_{n\rightarrow \infty} \int \varphi~d\mu_n = \int \varphi~d\mu.\]    
Since our measures are no longer probability measures, we prefer to not speak of tightness but instead think of them as Radon measures and abstractly extract a narrow limit point -- which may very well have mass much less than the sequence from which it originated but is none the less a Radon measure.  (Indeed, by the Riesz representation theorem, the dual of $C_0(\mathbb R^D)$ is isometrically isomorphic to the space of all Radon measures.)  Then if we wish to establish some property of the limiting measure (e.g., that it satisfies some suitable equation) it is enough to work with functions in $C_c^\infty(\mathbb R^D)$, as was discussed in the previous paragraph.  

Finally, as far as convergence of measures of sets are concerned, by standard properties of Radon measures it is the case that if $\mu_n \underset{^\ast}{\rightharpoonup} \mu$ and $A$ is a Borel set, then 
\begin{equation}\label{set_conv} \mu(A^\circ) \leq \liminf_n \mu_n(A) \leq \limsup_n \mu_n(A) \leq \mu(\bar A).\end{equation}
We refer the reader to e.g., \cite{EG}, Chapter 1.9 for such results.   
\end{remark}

First let us extract a narrow continuity statement for measures satisfying the deficient continuity equation:
%
%
\begin{lemma}\label{lemma_cont_def_cont} Let $v^\e_t$ be as in Proposition \ref{prop:erz} and let $\e \in (0,1)$. Suppose $(\mu_t^\e)_{t \in [0, T]}$ satisfies the deficient continuity Equation \eqref{deficient_cont_eq}.  Then $t \rightarrow \mu_t^\e$ is a continuous path in $\mathscr M$, when the latter space is endowed with the distributional convergence topology. Moreover, if the $\alpha^{\mbox{th}}$ moment of $\mu_0^\e$ is finite for some $\alpha>0$ then $t \rightarrow \mu_t^\e$ is a continuous path in $\mathscr M$ for the narrow convergence topology.
\end{lemma}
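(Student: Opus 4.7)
My plan is to establish distributional continuity directly from the deficient continuity equation and then to upgrade to narrow continuity under the moment hypothesis by invoking the equivalence recorded in Remark \ref{dist_narrow}.

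For the distributional part, I fix $\varphi \in C_c^\infty(\mathbb R^D)$ and set $K := \operatorname{supp}\varphi$. Testing Equation \eqref{deficient_cont_eq} against product test functions of the form $\varphi(x)\eta(t)$, with $\eta \in C_c^\infty(0,T)$, yields in the distributional sense on $(0,T)$ the identity
$$\frac{d}{dt}\int \varphi \, d\mu_t^\e \; = \; \int \bigl(\langle \nabla\varphi, v_t^\e\rangle - \e\, \varphi\, |v_t^\e|\bigr) \, d\mu_t^\e.$$
Since $\varphi$ and $\nabla\varphi$ vanish off $K$, where $|v_t^\e|$ is bounded by $f_K(t)$, and since $\mu_t^\e(\mathbb R^D) \leq \mu_0^\e(\mathbb R^D)$ (total mass is nonincreasing by Proposition \ref{preserv_exp}(iii) with $\alpha = 0$), the right-hand side is dominated in absolute value by $(1+\e)\|\varphi\|_{C^1} \mu_0^\e(\mathbb R^D) f_K(t)$, which belongs to $L^1(0,T)$. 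Integrating from $t_1$ to $t_2$ therefore gives
$$\Bigl| \int \varphi \, d\mu_{t_2}^\e - \int \varphi \, d\mu_{t_1}^\e \Bigr| \; \leq \; (1+\e)\|\varphi\|_{C^1} \mu_0^\e(\mathbb R^D) \int_{t_1}^{t_2} f_K(s) \, ds,$$
which vanishes as $t_2 \to t_1$ by absolute continuity of the integral. This is continuity in the distributional topology.

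For the narrow upgrade, the finite $\alpha$-moment hypothesis on $\mu_0^\e$ combined with Remark \ref{re:bounded_moment}(ii) provides a uniform bound $\sup_{t \in [0,T)} \int |x|^\alpha \, d\mu_t^\e < \infty$. Taking $\psi(x) = |x|^\alpha$ in the criterion recorded at the end of Remark \ref{dist_narrow}, the distributional convergence $\mu_{t_n}^\e \rightharpoonup \mu_{t_0}^\e$ established above along any sequence $t_n \to t_0$ automatically upgrades to narrow convergence, yielding the second assertion. The main technical point is the passage from the distributional formulation of Equation \eqref{deficient_cont_eq} on $(0,T) \times \mathbb R^D$ to the pointwise-in-time identity for $t \mapsto \int \varphi \, d\mu_t^\e$: one tests against separated products $\varphi(x)\eta(t)$ and approximates $\mathbf 1_{[t_1,t_2]}$ by smooth bumps in $t$, relying on $f_K \in L^1(0,T)$ and the bounded mass of $\mu_t^\e$ to justify the limiting procedure. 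Once this is in place, everything else is essentially free.
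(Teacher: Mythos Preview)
Your proof is correct and follows essentially the same approach as the paper: differentiate $t \mapsto \int \varphi\,d\mu_t^\e$ using the equation, bound the derivative by an $L^1(0,T)$ function via the local bound on $v_t^\e$ and a mass bound, conclude $W^{1,1}$ hence continuity, and then upgrade to narrow continuity using the uniform moment bound from Remark \ref{re:bounded_moment}. The only cosmetic differences are that the paper bounds by $k_D^\e(t)=\sup_D|v_t^\e|$ rather than the slightly larger $f_K$, and it writes out the $C_c^\infty$ approximation of $C_b$ functions explicitly where you instead invoke the equivalence recorded in Remark \ref{dist_narrow}.
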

\begin{proof} Let $\varphi \in C_c^\infty(\mathbb R^D)$. By Equation (\ref{partial_cont}) the distributional derivative of $t \rightarrow g^\e_\varphi(t):=\int \varphi~d\mu_t^\e$ exists and is equal to
\[\int_{D \cap X_t^\e(\mathbb S_t)} (\langle \nabla \varphi, v_t^\e \rangle -\e |v_t^\e| \varphi)~d\mu_t^\e  \]
where $D$ is an open ball containing the support of $\varphi$ and $\mathbb S_t$ is defined in Proposition \ref{prop:erz}, corresponding to $v_t^\e$. Let 
\begin{equation}\label{eq:vel-upperbound} k_{D}^\e(t)=\sup_{x \in D} |v^\e_t|.\end{equation} 
We have 
\[ |\langle \nabla \varphi, v_t^\e \rangle -\e |v_t^\e| \varphi| \leq C k_{D}^\e(t) \]
for a constant $C$ depending only on $\varphi$ but independent of $t$.  As $k_{D}^\e \in L^1(0,T)$ we conclude that 
$g^\e_\varphi \in W^{1,1}(0,T)$ and so, it is continuous.  

Having established continuity in the distributional topology, continuity in the narrow topology is readily established under the stated conditions by standard approximation arguments.  Indeed, for $\varphi \in C_b^\infty$ we may write 
$\varphi = \varphi_n + \varphi - \varphi_n$ with 
$\varphi_n \in C_c^\infty$ satisfying $\varphi_n \leq \varphi$ and 
$\varphi - \varphi_n$ supported only outside a large region (tending to all of $\mathbb R^D$
as $n \to \infty$).  Remark \ref{re:bounded_moment} ensures if 
the $m^{\text{th}}$ moment of $\mu_0^\e$ is finite for some $m>0$
then  the $m^{\text{th}}$ moment of $\mu_t^\e$ remains uniformly bounded on $[0,T).$
This provides us with a uniform tightness condition that can be used to estimate the 
``non--compact'' portion of $\varphi$.
\end{proof}
%
%
\begin{remark}\label{rem_finite} Further suppose in Lemma \ref{lemma_cont_def_cont} that, e.g., $|\mu^\e_t| \leq 1$ and that for any compact set $K \subset \mathbb R^D$, there exists some constant $C(K) > 0$ such that for all $\e$,
\begin{equation}\label{poly_bound} \sup_{t \in [0, T], ~x \in K}|v_t^\e(x)| \leq C(K).
\end{equation}
That is, the velocity fields are locally bounded, uniformly for all $\e$ and $t$ of interest.  

Let us record that in the proof of that Lemma we have exhibited a functional $\varphi \rightarrow C_\varphi$ of $C_c^\infty(\mathbb R^D)$ into $(0,\infty)$ such that if $D$ is an open ball containing the support of $\varphi$ then 
\[||g^\e_\varphi||_{W^{1,1}(0,T)} \leq C_\varphi \int_0^T k_D(t)~dt, \qquad 
g^\e_\varphi(t):=\int \varphi~d\mu_t^\e. \]
But since $k_D \in L^\infty(0,T)$, we have in fact that $g_\varphi^\e \in W^{1, \infty}(0,T)$ for all $\e$ and hence it is Lipschitz, with Lipschitz constant $\|k_D\|_{L^\infty}$.  Thus, it is emphasized, that if the velocity field is bounded uniformly in $\e$, so is the resulting estimate on the relevant time derivative.
\end{remark}

\begin{prop}\label{narrow_conv}
Suppose $\sup_\e|\mu_0^\e|<\infty$ and we have $v_t^\e$ and $\mu_t^\e$ such that Equations \eqref{deficient_cont_eq} and \eqref{poly_bound} hold.  Then there exists a sequence $(\e_n)$ decreasing to $0$ such that $(\mu^{\e_n}_t)$ has a distributional limits $\mu_t$ as $n$ tends to $\infty$, for all $t \in [0,T].$ Furthermore, $t \rightarrow \mu_t$ is  continuous for the distributional convergence. 
\end{prop}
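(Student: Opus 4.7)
The strategy is a standard Arzel\`a--Ascoli plus diagonal extraction argument, leveraging the uniform Lipschitz estimates from Remark \ref{rem_finite}. First I would fix a countable set $\{\varphi_k\}_{k \in \mathbb N} \subset C_c^\infty(\mathbb R^D)$ which is dense in $C_c(\mathbb R^D)$ in the uniform norm (for instance, by exhausting $\mathbb R^D$ by balls and choosing a countable dense family of test functions supported in each ball). For each $k$, set $g_{\varphi_k}^\e(t) := \int \varphi_k \, d\mu_t^\e$. The uniform mass bound $\sup_\e |\mu_0^\e| < \infty$ combined with Proposition \ref{preserv_exp}(iii) (or the $m = 0$ case of Remark \ref{re:bounded_moment}(ii)) gives $|g_{\varphi_k}^\e(t)| \le \|\varphi_k\|_\infty \sup_\e |\mu_0^\e|$, which is an $\e$-uniform sup bound. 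Remark \ref{rem_finite} combined with hypothesis \eqref{poly_bound} yields that $g_{\varphi_k}^\e$ is Lipschitz on $[0,T]$ with a Lipschitz constant $L_k = C_{\varphi_k}\, C(D_k)$ that depends on $\varphi_k$ but is independent of $\e$.

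For each fixed $k$, the family $\{g_{\varphi_k}^\e\}_\e$ is therefore equi-Lipschitz and equi-bounded on $[0,T]$, so Arzel\`a--Ascoli yields a subsequence along which $g_{\varphi_k}^\e$ converges uniformly on $[0,T]$. A standard diagonal extraction produces a single sequence $\e_n \downarrow 0$ such that for every $k$, $g_{\varphi_k}^{\e_n}$ converges uniformly on $[0,T]$ to some continuous function $h_k$. For each $t \in [0,T]$, the map $\varphi_k \mapsto h_k(t)$ extends to a bounded, positive linear functional on $C_c(\mathbb R^D)$ (positivity is preserved since $\mu_t^{\e_n} \ge 0$, and boundedness follows from the uniform total mass bound together with $\|\cdot\|_\infty$-density of $\{\varphi_k\}$). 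By the Riesz representation theorem there is a Radon measure $\mu_t$ with $\int \varphi_k \, d\mu_t = h_k(t)$ for every $k$.

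To upgrade from the countable family to all of $C_c^\infty(\mathbb R^D)$, given $\varphi \in C_c^\infty(\mathbb R^D)$ choose $\varphi_k$ supported in a common ball with $\|\varphi - \varphi_k\|_\infty$ arbitrarily small. Using $\sup_n |\mu_t^{\e_n}| \le \sup_\e |\mu_0^\e|$ and $|\mu_t| \le \sup_\e |\mu_0^\e|$, we get
\begin{equation*}
\Bigl|\int \varphi \, d\mu_t^{\e_n} - \int \varphi \, d\mu_t\Bigr| \le 2\sup_\e |\mu_0^\e|\,\|\varphi - \varphi_k\|_\infty + \Bigl|\int \varphi_k \, d\mu_t^{\e_n} - \int \varphi_k \, d\mu_t\Bigr|,
\end{equation*}
and taking $n \to \infty$ then $k$ appropriately yields distributional convergence $\mu_t^{\e_n} \rightharpoonup \mu_t$ for every $t \in [0,T]$. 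Finally, continuity of $t \mapsto \mu_t$ in the distributional topology is immediate: for each $\varphi \in C_c^\infty$, the function $t \mapsto \int \varphi \, d\mu_t$ is the pointwise (indeed uniform) limit of the equi-Lipschitz functions $g_\varphi^{\e_n}$ and therefore itself Lipschitz on $[0,T]$.

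The only nontrivial point is the diagonal extraction combined with a clean identification of the limiting functional as a Radon measure; the uniform mass bound is what makes this identification work and also what allows the density argument to convert convergence on the countable family $\{\varphi_k\}$ into distributional convergence against every $\varphi \in C_c^\infty$. Everything else is a direct consequence of the estimates already packaged in Lemma \ref{lemma_cont_def_cont} and Remark \ref{rem_finite}.
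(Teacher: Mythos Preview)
Your argument is correct and complete. The essential inputs---the uniform mass bound from Remark \ref{re:bounded_moment} and the $\e$--independent Lipschitz estimate from Remark \ref{rem_finite}---are exactly those the paper uses, but you organize the compactness differently. The paper diagonalizes over a countable dense set of \emph{times} (namely $(0,T)\cap\mathbb Q$), invoking Banach--Alaoglu at each rational $t$ to extract weak$^\ast$ limits, and then spends two separate claims extending the limiting curve to all $t$ and verifying convergence there. You instead diagonalize over a countable dense set of \emph{test functions} and apply Arzel\`a--Ascoli in $C([0,T])$ for each one; this yields uniform-in-$t$ convergence of $g_{\varphi_k}^{\e_n}$ immediately, so the extension to all $t$ and the continuity of $t\mapsto\mu_t$ come for free. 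Your route is somewhat more direct for this particular statement; the paper's route is closer in spirit to the standard measure-theoretic picture (weak$^\ast$ compactness of bounded sets of Radon measures) and makes the role of the Lipschitz estimate in passing from rational to irrational times more explicit. One small remark: your phrase ``dense in $C_c(\mathbb R^D)$ in the uniform norm'' should be read as density in $C_0(\mathbb R^D)$ (which is separable), but since your three-term estimate only uses $\|\varphi-\varphi_k\|_\infty$ and the uniform mass bound, this is harmless.
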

\begin{proof} Assume for instance that $|\mu_0^\e| \leq 1.$ Remark \ref{re:bounded_moment} ensures that $|\mu_t^\e| \leq 1$ uniformly in $t$ and $\e.$ Using a diagonal sequence argument we can apply the Banach--Alaoglu theorem to obtain  $(\e_n) \subset (0,\infty)$ converging to $0$ as $n$ tends to $\infty$ such that $(\mu_t^{\e_n})$ converges in the distributional sense to some $\mu_t \in \mathscr M$ for e.g., each $t \in \mathscr D:=(0,T) \cap \mathbb Q.$

{\bf 1.} Claim: If $t \in (0,T)$ then for any sequence $(t_k) \subset \mathscr D$ converging to $t$ we have that $\mu_{t_k} \rightharpoonup \mu_t$ for some $\mu_t \in \mathscr M$ independently of the sequence $(t_k).$  (That is, the limit can be extended to all $t \in[0, T]$.)

{\it \hskip 0.2in Proof of claim.} By the Banach--Alaoglu theorem the set $(\mu_t)_{t \in \mathscr D }$ is pre-compact for the distributional topology. Let $(t_k), (t^\ast_k)\subset \mathscr D$ be sequences converging to $t$ as $k$ tends to $\infty$ and suppose that $\mu_{t_k} \rightharpoonup \nu$ and $\mu_{t_k^\ast} \rightharpoonup \nu^\ast$ as $k$ tends to $\infty$. Let $D$ be an open ball of radius $r$ containing the support of an arbitrarily fixed function $\varphi \in C_c^\infty(\mathbb R^D)$, set $f_D(t)=C(D)$ and as in Remark \ref{rem_finite} set  \[g^\e_\varphi(t):=\int \varphi~d\mu_t^\e, \qquad g_\varphi(t):=\int \varphi~d\mu_t. \]  
Let $C_\varphi$ be as in Remark \ref{rem_finite}, then since the estimates are uniform in $\e$ we may let $\e_n \rightarrow 0$ in that Remark to obtain   
\begin{equation}\label{eq:lipschitz_varphi} |g_\varphi(t_k) -g_\varphi(t^\ast_k)| \lesssim C_\varphi |t_k-t^\ast_k| \cdot C(D). 
\end{equation} 
Letting $k$ tend to $\infty$ we obtain  
\[ \int_{\mathbb R^D} \varphi ~d\nu =\int_{\mathbb R^D} \varphi ~d\nu^\ast.\] 
As $\varphi \in C_c^\infty(\mathbb R^D)$ is arbitrary we conclude that $\nu=\nu^\ast$ which proves the claim.

{\bf 2.} Claim: If $t \in (0,T)$ then $(\mu_t^{\e_n})$ converges in the distributional sense to $\mu_t \in \mathscr M.$

{\it \hskip 0.2in Proof of claim.} We use Equation (\ref{eq:lipschitz_varphi}) and the way $t \rightarrow \mu_t$ has been extended to $(0,T)$ to obtain for $t , t^* \in (0, T)$
\begin{equation}\label{eq:lipschitz_varphi2} 
|g_\varphi(t) -g_\varphi(t^\ast)| \lesssim \bar C |t-t^\ast| \qquad \forall t, t^\ast \in (0,T), \qquad \bar C:= C_\varphi \cdot C(D). 
\end{equation} 
Fix $t \in (0,T)$. Then for $t_k \in \mathscr D$ we have 
\[\begin{split}|g^{\e_n}_\varphi(t)-g_\varphi(t)| &\leq  |g^{\e_n}_\varphi(t)-g^{\e_n}_\varphi(t_k)|+ |g^{\e_n}_\varphi(t_k)-g_\varphi(t_k)|+ |g_\varphi(t_k)-g_\varphi(t)|\\
&\lesssim \bar C |t-t_k|+|g^{\e_n}_\varphi(t_k)-g_\varphi(t_k)|
. \end{split}\]
We first let $n$ tend to $\infty$ and then $t_k$ tend to $t$ to conclude that 
\[\lim_{n \rightarrow \infty} g^{\e_n}_\varphi(t)=g_\varphi(t)\] 
which proves the claim.

\begin{remark}
We remark that it is in fact also possible to abstractly retrieve some limiting velocity fields $(v_t)_{t \in [0, T]}$ so that together with the limiting measures $(\mu_t)_{t \in [0, T]}$ the continuity equation 
\begin{equation}\label{cont_eq} \partial_t \mu_t + \nabla \cdot (\mu_t v_t) = 0\end{equation}
is satisfied.  Indeed, the basis for such a result is Lemma 7.2 in \cite{wilfrid_ambrosio}, which can be adapted to deficient measures \emph{mutatis mutantis}.  In our case we also have the additional complication that the velocity fields have only a \emph{local} bound.  However, this can be dispensed with by inserting another diagonalization procedure where we consider finite volume measures, $\mu_t^{\e, L}$, which are supported in regions of scale $L$.  Then we may first take $\e$ to zero, and then $L$ to infinity.  For the principal results of this note, this route will be avoided due (in part) to the fact that the velocity field and its limit must (and will) be produced on the basis of an explicit dynamical structure.
\end{remark}

\end{proof}

%
%
%
%
%
%
%
%
\subsection{Hamiltonian Dynamics With Mass Dissipation}\label{e_reg_exist}
\begin{defn}\label{defn_ham}
Given $\mu$ a measure on $\mathbb R^D$ where $D = 2d$ -- and where we denote $x = (p,q)$ -- we define our Hamiltonian to be 
\begin{equation}\label{eq_ham} \mathscr H(\mu) = \frac{1}{2} \int |p|^2~d\mu(x) + \frac{1}{2}\int (W * \mu)(q)~d\mu(x) + \int \Phi(q)~d\mu(x), \end{equation}
where $W$ and $\Phi$ are both functions of $q$ and $W$ is even.  We further assume that: 
\begin{itemize} 
\item[$\circ$] $W \in C_c^2(\mathbb R^d)$.
\item[$\circ$] $\Phi \in C^2(\mathbb R^d)$.
\end{itemize} 
We let $a$ denote the range of the interaction -- i.e., $W$ is supported on a ball of radius $a$.  Although not always strictly necessary, we shall further assume, with no essential loss of generality, that $\Phi$ is polynomially bounded:
$$
|\Phi(q)| \leq B_{1}|q|^{b_2}
$$
for finite constants $B_{1}$ and $b_{2}$.

Formally, the gradient of $\mathscr H$ with respect to the $2$--Wasserstein  metric at $\mu$ is the functional 
\begin{equation}\label{grad_ham} 
 (p,q) \rightarrow \nabla_{W} \mathscr H(\mu)(p,q) =[p, \nabla (W \ast \mu + \Phi)(q)]  =:V_\mu(p,q)
 \end{equation} 
provided that $\mu$ is sufficiently well--behaved (cfr., e.g., \cite{AGN} or \cite{villani_old}).  We shall use Equation \eqref{grad_ham} in order to define the relevant dynamics.   

Let $J$ be the $D \times D$ symplectic matrix so that $J(p,q)=(-q,p)$.  We say that $(\mu_t^\e)_{t \in [0, T]}$ solves the {\it deficient Hamiltonian ODE} with initial condition $\mu_0$ if it satisfies 
\begin{equation}\label{dhamode} \partial_t \mu_t^\e + \nabla \cdot (J \nabla_W \mathscr H(\mu_t^\e) \mu_t^\e) = -\varepsilon |J \nabla_W \mathscr H(\mu_t^\e)|\mu_t^\e. 
\end{equation}
Similarly, we say that $(\mu_t)_{t \in [0, T]}$ solves the {\it Hamiltonian ODE} with initial condition $\mu_0$ if it satisfies 
\begin{equation}\label{dhamode_limit} \partial_t \mu_t + \nabla \cdot (J \nabla_W \mathscr H(\mu_t) \mu_t) = 0 .\end{equation}
Equations~\eqref{dhamode} and \eqref{dhamode_limit} are again understood in the appropriate distributional sense, by testing against functions $\varphi \in C_c^\infty((0,T) \times \mathbb R^D)$.  
\end{defn}

To enable us to extract limiting Hamiltonian dynamics, let us now prove: 
%
\begin{lemma}\label{lemma_ham}  Set $V_\mu= J \nabla_{W} \mathscr H(\mu)$ and suppose  
\begin{itemize}  
\item[$\circ$] $(\mu_n)$  is of uniformly bounded total mass and converges to $\mu$ in the distributional sense.
\item[$\circ$] We have the tightness in $p$ condition: $\lim_{r \rightarrow \infty}C_r(\bar q)=0$ for all $\bar q$ where
\[ C_r(\bar q)=\sup_{n \in \mathbb N} \int_{ B^c_r \times \mathbb R^d} |\nabla W|(\bar q -q) d\mu^n(p,q).\] 
\end{itemize} 
Then  $(V_{\mu_n})$ converges uniformly to $V_\mu$ on compact sets.  So, in particular, 
\[ V_{\mu_n}\mu_n \rightharpoonup V_\mu\mu \quad \hbox{and} \quad |V_{\mu_n}| \mu_n \rightharpoonup |V_\mu| \mu\]
in the sense of distribution. 
\end{lemma}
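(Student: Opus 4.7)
The plan is to reduce everything to a statement about $\nabla(W*\mu_n)(\bar q)$, since the other ingredients of $V_\mu$ are $\mu$-independent. Writing $V_\mu(p,q)=[-\nabla(W*\mu)(q)-\nabla\Phi(q),\,p]$, the $q$-component $p$ does not involve $\mu$ at all, and $\nabla\Phi$ is fixed. So uniform convergence of $V_{\mu_n}$ to $V_\mu$ on any compact set $K\subset\mathbb R^D$ is equivalent to uniform convergence of $\bar q\mapsto\nabla(W*\mu_n)(\bar q)$ to $\nabla(W*\mu)(\bar q)$ on the projection $K_q$ of $K$ onto $q$-space.

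For a fixed $\bar q$, I would first split the integral $\int\nabla W(\bar q-q)\,d\mu_n(p,q)$ according to whether $|p|\le r$ or $|p|>r$. The large-$p$ tail is controlled directly by the hypothesis through $C_r(\bar q)$, and the corresponding tail for the limit $\mu$ is obtained by applying Fatou's lemma to a continuous $p$-cutoff: picking a continuous $\chi$ with $\mathbf 1_{|p|>r}\le\chi(p)\le\mathbf 1_{|p|>r-1}$ and using distributional (in fact narrow, since total masses are uniformly bounded) convergence gives $\int\chi(p)|\nabla W|(\bar q-q)\,d\mu\le\liminf_n\int\chi(p)|\nabla W|(\bar q-q)\,d\mu_n\le C_{r-1}(\bar q)$. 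The remaining, $|p|\le r$, piece is handled by introducing a compactly supported smooth cutoff $\zeta_r(p)$ with $\zeta_r\equiv1$ on $B_r$: the integrand $\zeta_r(p)\nabla W(\bar q-q)$ lies in $C_c^\infty(\mathbb R^D)$, so distributional convergence of $\mu_n$ to $\mu$ gives convergence of the corresponding integrals. Combining these three pieces yields pointwise convergence $\nabla(W*\mu_n)(\bar q)\to\nabla(W*\mu)(\bar q)$ for each $\bar q$.

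To upgrade pointwise convergence to uniform convergence on $K_q$, I would invoke equicontinuity. Because $W\in C_c^2$, $\nabla W$ is Lipschitz with some constant $L$, and $\bar q\mapsto\int\nabla W(\bar q-q)\,d\mu_n$ has Lipschitz constant at most $L\,|\mu_n|$, which is uniform in $n$ by hypothesis. Equicontinuity of a sequence of functions on the compact set $K_q$, together with pointwise convergence, forces uniform convergence by a standard Arzelà--Ascoli argument. This is the step I expect to require the most care, since it is the one place where the tightness condition, the smoothness of $W$, and the uniform mass bound all have to be used simultaneously.

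Once $V_{\mu_n}\to V_\mu$ uniformly on compacts and $V_\mu$ is continuous (clear from continuity of $\nabla W*\mu$ and $\nabla\Phi$), the two distributional convergences follow by a standard decomposition. For $\varphi\in C_c^\infty(\mathbb R^D)$ with support in a compact $K$,
\begin{equation*}
\int\varphi\,V_{\mu_n}\,d\mu_n-\int\varphi\,V_\mu\,d\mu
=\int\varphi\,(V_{\mu_n}-V_\mu)\,d\mu_n+\int\varphi\,V_\mu\,d(\mu_n-\mu);
\end{equation*}
the first term is bounded by $\|\varphi\|_\infty\,\|V_{\mu_n}-V_\mu\|_{L^\infty(K)}\,\sup_n|\mu_n|\to0$, and the second vanishes in the limit since $\varphi\,V_\mu\in C_c(\mathbb R^D)$ (and distributional convergence extends to $C_c$ test functions under the uniform mass bound by density of $C_c^\infty$ in $C_c$). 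Replacing $V_{\mu_n}$ by $|V_{\mu_n}|$ and using $\bigl||V_{\mu_n}|-|V_\mu|\bigr|\le|V_{\mu_n}-V_\mu|$ gives $|V_{\mu_n}|\mu_n\rightharpoonup|V_\mu|\mu$ by the same argument.
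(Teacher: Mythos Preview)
Your proof is correct and follows essentially the same route as the paper: reduce to uniform convergence of $\nabla W*\mu_n$ on compacts, obtain equicontinuity from the uniform mass bound and $W\in C_c^2$ (the paper phrases this as boundedness in $W^{1,\infty}$ and invokes Arzel\`a--Ascoli), and establish pointwise convergence by splitting the integral with a continuous $p$-cutoff, controlling the tail via $C_r(\bar q)$ and the compactly supported piece via distributional convergence. One small remark: your parenthetical ``in fact narrow, since total masses are uniformly bounded'' is not quite right---a uniform mass bound alone does not upgrade distributional to narrow convergence---but you do not actually need it, since the inequality $\int\chi|\nabla W|\,d\mu\le\liminf_n\int\chi|\nabla W|\,d\mu_n$ for nonnegative continuous integrands follows from distributional convergence by monotone approximation from below by $C_c$ functions (this is exactly what the paper does as well).
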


\begin{proof} It suffices to show that $(\nabla W * \mu_n)$ converges uniformly to $\nabla W * \mu$ on compact sets. As $(\mu_n)$  is of uniformly bounded total mass, $(\nabla W * \mu_n)$ is a bounded subset of $W^{1,\infty}(\mathbb R^d)$  
and so,  by the Ascoli-Arzela theorem $(\nabla W * \mu_n)$ is precompact in 
$C(B_Q)$
-- where $B_{Q}$ is a ball of radius $Q$ in $\mathbb R^{d}$ --
 for any $Q>0$. To show that $(\nabla W * \mu_n)$ converges uniformly to $\nabla W * \mu$ on $B_Q$ it suffices to show that it converges pointwise  to $\nabla W * \mu$. 

For $r>0$ let $\theta_r \in C(\mathbb R)$ be a monotone nondecreasing continuous function such that $0 \leq \theta_r \leq 1$, 
\[\begin{cases} \theta_r(p) = 1~~~\mbox{for $|p| \geq r$}\\
\theta_r(p) = 0~~~\mbox{for $|p| \leq r -1$}.  
\end{cases}\]
Using for $\mu_n$ the decomposition  
\[ \nabla W * \mu_n(\bar q)= \int_{\mathbb R^{D}}  \nabla W(\bar q - q) (1-\theta_r(|p|))~d\mu_n(p,q)   + 
 \int_{\mathbb R^{D}}  \nabla W(\bar q - q) \theta_r(|p|)~d\mu_n(p,q)
\]
and writing a similar decompostion for $\mu$ we obtain 
\begin{equation}\label{e:vel_convergence1}
 |\nabla W * \mu_n(\bar q)-\nabla W * \mu(\bar q)|  \leq  
|\int_{\mathbb R^{D}}  \nabla W(\bar q - q) (1-\theta_r(|p|))\bigl(d\mu_n(p,q)-d\mu(p,q)\bigr)|
 + 2 C_{r-1}(\bar q)
\end{equation} 
To obtain the upper bound in Equation (\ref{e:vel_convergence1}) we have used distributional convergence: \[  \int_{\mathbb R^D}  |\nabla W(\bar q - q)| \theta_r(|p|)~d\mu(p,q) \leq 
 \limsup_{n\rightarrow \infty} \int_{\mathbb R^D}  |\nabla W(\bar q - q)| \theta_r(|p|)~d\mu_n(p,q) \leq  C_{r-1} (\bar q).  \] 
 Finally, we use that $(p,q) \rightarrow \nabla W(\bar q - q) (1-\theta_r(|p|))$ is of compact support and again utilize the fact that $(\mu_n)$  converges to $\mu$ in the distributional sense to conclude from Equation (\ref{e:vel_convergence1}) that 
 \[  \limsup_{n\rightarrow \infty}  |\nabla W * \mu_n(\bar q)-\nabla W * \mu(\bar q)|  \leq 2 C_{r-1}(\bar q). \]
Letting $r$ tend to $\infty$ we have that $(\nabla W * \mu_n(\bar q))$ converges  pointwise  to $\nabla W * \mu(\bar q)$.
\end{proof}
%
\begin{thm}[Existence of Solution to Deficient Hamiltonian ODE]\label{exist_def_ODE}
For fixed $\alpha_0 > 0$, $\varepsilon > 0$, and $0 < T < \infty$, let $\mu_0 \in \mathcal M_{\alpha_0}$ denote some initial Borel measure on $\mathbb R^D$ and let $\mathscr H$ be the Hamiltonian in Definition \ref{defn_ham}.  Assume for simplicity that the total mass of $\mu_0$ is $1.$ Then there exists a path $t \rightarrow \mu_t^\varepsilon \in \mathcal M_{\alpha}$ where $\alpha < \min\{\alpha_0, \varepsilon\}$ such that 
\begin{itemize}
\item[(i)] $(\mu_t^\varepsilon)_{t \in [0, T]}$ satisfies the deficient Hamiltonian ODE 
\begin{equation} \label{def_eq} 
\partial_t \mu_t^\varepsilon + \nabla \cdot (J \nabla_{W} \mathscr H(\mu_t^\varepsilon)\mu_t^\varepsilon ) = -\varepsilon |J \nabla_{W} \mathscr H(\mu_t^\varepsilon)|\mu_t^\varepsilon. 
\end{equation}  

\item[(ii)] $t \rightarrow \mu_t^\varepsilon \in \mathcal M_\alpha$ is narrowly continuous and $M_{\alpha}(\mu_t^\e)$ is monotonically nonincreasing in $t$.  In particular, $M_0(\mu_t^\e) \leq 1$ for $t \in [0, T]$.
\end{itemize}
\end{thm}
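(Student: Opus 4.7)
The approach I would take is a Schauder fixed-point argument on a space of narrowly continuous measure-valued paths. Fix $\e>0$ and $\alpha \leq \min\{\alpha_0,\e\}$. The plan is to build a convex set $\mathcal K$ of admissible paths together with a self-map $F\colon \mathcal K \to \mathcal K$ whose fixed points are precisely the solutions of Equation~\eqref{def_eq}.

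Let $\mathcal K$ consist of all paths $(\nu_t)_{t\in[0,T]}$ with $\nu_0=\mu_0$, $|\nu_t|\le 1$, $M_\alpha(\nu_t)\le M_\alpha(\mu_0)$, and a uniform modulus-of-continuity bound $\bigl|\int\varphi\,d\nu_t - \int\varphi\,d\nu_s\bigr| \le C_\varphi\,|t-s|$ for every $\varphi\in C_c^\infty(\mathbb R^D)$, with $C_\varphi$ the explicit constant furnished by Remark~\ref{rem_finite} applied to the ($\nu$-independent) velocity bound $|v^\nu_t(p,q)| \le |p| + \|\nabla W\|_\infty + \|\nabla\Phi\|_{L^\infty(\mathrm{supp}\,\varphi)}$. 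Given $\nu\in\mathcal K$, I set
\[
v_t^\nu(p,q) = J\nabla_W\mathscr H(\nu_t)(p,q) = \bigl(-\nabla(W\ast\nu_t + \Phi)(q),\,p\bigr).
\]
Since $W\in C_c^2$, $\Phi\in C^2$, and $|\nu_t|\le 1$, $v^\nu$ is locally Lipschitz and $f_K(t)$ as in Proposition~\ref{prop:erz} is bounded uniformly in $t$ and in $\nu\in\mathcal K$. Proposition~\ref{preserv_exp} then yields a path $F(\nu):=(\mu^\nu_t)_{t\in[0,T]}$ solving the deficient continuity equation driven by $v^\nu$; by Proposition~\ref{preserv_exp}(iii), Remark~\ref{rem_finite}, and Remark~\ref{re:bounded_moment}(iii), $F(\nu)\in\mathcal K$.

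Equipping $\mathcal K$ with the topology of pointwise-in-$t$ narrow convergence (metrizable here by the uniform mass bound and separability), $\mathcal K$ is convex, and compactness follows from uniform tightness (Prokhorov, via the exponential moment bound) combined with the equicontinuity in the definition of $\mathcal K$. The technical heart is the continuity of $F$: if $\nu^n\to\nu$ in $\mathcal K$, then the tightness-in-$p$ hypothesis of Lemma~\ref{lemma_ham} is supplied by the uniform exponential moment, so $v^{\nu^n}_t\to v^\nu_t$ uniformly on compact sets at each $t$; one then uses the Lagrangian representation $\int\psi\,dF(\nu)_t = \int_{\mathbb S^\nu_t}(R^\nu_t\psi)\circ X^\nu_t\,d\mu_0$ from Proposition~\ref{preserv_exp}(ii), standard ODE stability of $X^\nu_t$, and the domination $R^\nu_t\circ X^\nu_t(x)\,e^{\alpha|X^\nu_t(x)|}\le e^{\alpha|x|}$ (extracted from the proof of Proposition~\ref{preserv_exp}) to pass to the limit via dominated convergence. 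Schauder's theorem then produces a fixed point $\mu^\e=F(\mu^\e)\in\mathcal K$, which yields (i); property (ii) is immediate from Proposition~\ref{preserv_exp}(iii) and Lemma~\ref{lemma_cont_def_cont} applied at $\mu^\e$.

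The main obstacle will be justifying the continuity of $F$. Although locally uniform convergence of velocities is handed to us by Lemma~\ref{lemma_ham}, the flows $X^\nu_t$ live on all of $\mathbb R^D$ where $v^\nu$ is only locally bounded (linear in $p$, polynomial in $q$ through $\nabla\Phi$); the flow estimates therefore have to be localized to balls whose complement carries arbitrarily little $\mu_0$-mass, and the uniform exponential-moment bound on $\mathcal K$ is what makes this localization close uniformly in $n$. The same bound controls the travel distance $\int_0^t|\dot X^\nu_s|\,ds$ and hence the stability of the weight $R^\nu_t$, so the scheme should close — but the bookkeeping here is the subtlest part of the argument.
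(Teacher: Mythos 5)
Your fixed-point scheme is viable and takes a genuinely different route from the paper, which is constructive: it runs the time-discretization of Section 6 of \cite{wilfrid_ambrosio}, freezing the velocity $J\nabla_W\mathscr H(\mu^{\e,n}_{kh})$ on each interval $[kh,(k+1)h)$, solving the deficient equation there via Proposition \ref{preserv_exp}, extracting a distributional limit by the diagonal argument of Proposition \ref{narrow_conv}, and passing to the limit in the nonlinear terms through Lemma \ref{lemma_ham}, whose tightness-in-$p$ hypothesis is supplied by exactly the exponential-moment bound you invoke (Equation \eqref{est_a_priori}); an $O(h)$ bound from Remark \ref{re:bounded_moment}(iii) then removes the time discretization of the velocity. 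Your Schauder argument trades this approximation-and-compactness step for compactness of $\mathcal K$ plus continuity of the solution map $F$, and the ingredients you name for continuity (Lemma \ref{lemma_ham} with the uniform $M_\alpha$ bound, Gronwall stability of flows on compact sets, decay of the weight along far-travelling trajectories) are the right ones: they are essentially Lemma \ref{trajec_lemmanew} and Case 2 of the proof of Lemma \ref{rep_form}, which for fixed $\e>0$ go through with $n$ replaced by your index. Two points need more care than your sketch gives them: pointwise-in-$t$ narrow convergence over the uncountable index set $[0,T]$ is not metrizable as stated, so you should instead work in $C([0,T];\mathcal X)$ with $\mathcal X$ the narrowly compact metrizable set of measures of mass at most $1$ and $M_\alpha\le M_\alpha(\mu_0)$, using your equicontinuity and Arzel\`a--Ascoli; and continuity of $F$ at paths whose flow loses particles to infinity before time $t$ must be argued as in Case 2 of Lemma \ref{rep_form} (the weight of a trajectory shadowing an escaping one is below $e^{-\e A}$ for every $A$), not by domination alone. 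As for what each approach buys: Schauder is shorter for bare existence and, since you define $F$ through the representation formula of Proposition \ref{preserv_exp}(ii), the fixed point satisfies the pullback formula of Lemma \ref{rep_form} automatically; the paper's discretized measures, however, are not merely a route to existence --- their explicit pushforward structure is reused throughout Section \ref{sec_mass_conv} (Lemmas \ref{lem_tightness} and \ref{le:bound-on-times1} pull estimates back to $\mu_0$ through the discretized flows, uniformly in $\e$ and $n$), so the constructive scheme provides infrastructure for the $\e\to0$ limit that your existence proof would have to rebuild separately.
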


\begin{proof}
The construction of $\mu^\varepsilon_t$ uses roughly the discretization scheme of $\S 6$ of \cite{wilfrid_ambrosio}, which goes as follows: 
\begin{enumerate}
\item For $n \geq 1$ define the step size $h = T/n$.  
\item We start with $\mu_0^{\e, n} = \mu_0$ and define $v_0^{\e, n} = J \nabla_W \mathscr H(\mu_0^{\e, n})$. 
\item For $t \in [kh, (k+1)h)$ we define $\mu_t^{\e, n}$ to be the solution to the deficient equation given in Proposition \ref{preserv_exp} with the constant velocity field
\[ v_{kh}^{\e, n} = J \nabla_W \mathscr H(\mu_{kh}^{\e, n}).\]
\end{enumerate}
By construction, we therefore see that $(\mu_t^{\e, n})_{t \in [0, T]}$ satisfies 
\begin{equation}\label{disc_ham_eqn} 
\partial_t \mu_t^{\e, n} + \nabla \cdot\left( \mu_t^{\e, n} J\nabla_W \mathscr H(\mu_{[t/h]h}^{\e, n})\right) = - \e \left|J\nabla_W \mathscr H(\mu_{[t/h]h}^{\e, n})\right| \mu_t^{\e, n}.
\end{equation}
Furthermore, Proposition \ref{preserv_exp} allows us to write 
\begin{equation}\label{disc_ham_represent} 
\int_{\mathbb R^D} \varphi~d\mu_t^{\e, n}= \int_{\mathbb S_t^{\e, n}} (R_t^{\e, n} \varphi) \circ X_t^{\e, n} d\mu_0
\end{equation} 
for all $\varphi \in C_c^\infty(\mathbb R^D).$ Here $X^{\e, n}$ is the flow defined by 
\[ \dot X^{\e, n}_t= v^{\e, n}_t (X^{\e, n}_t), \qquad X^{\e, n}_0=x\] 
and 
\[ R_t^{\e, n} \circ X^{\e, n}_t= \exp\Bigl(-\int_0^t \varepsilon | v^{\e, n}_s ( X_s^{\e, n})|~ds \Bigr).\]
%
By Proposition \ref{preserv_exp}, (iii) $t \rightarrow M_\alpha(\mu_{t}^{\e, {n}})$ is monotone nonincreasing. In particular,  the total masses of the $\mu_t^{\e, n_k}$'s are uniformly bounded. Proceeding as in (the proof of) Proposition \ref{narrow_conv} we obtain the existence of an increasing sequence of natural numbers $(n_k)_{k \in \mathbb N}$ such that as $k$ tends to $\infty$,  $(\mu_t^{\e, {n_k}})_{k \in \mathbb N}$ converges in the sense of distribution to a measure $\mu_t^{\e}$ for each $t \in [0,T].$ In order to avoid adding a new subscript we shall write that $(\mu_t^{\e, {n}})_{n \in \mathbb N}$ converges in the sense of distribution to a measure $\mu_t^{\e}$ for each $t \in [0,T]$ where $n$ is restricted to an appropriate subset of $\mathbb N.$ Observing that by disintegration and Markov's inequality that for all $r>0$ and $\bar q \in \mathbb R^d$ we have
\begin{equation}\label{est_a_priori}\int_{B_r^c \times \mathbb R^d}|\nabla W(\bar q-q)|~ d\mu_t^{\e, {n}}(p,q) \leq e^{-\alpha r} \int e^{\alpha |(p,q)|} |\nabla W(\bar q-q)| ~d\mu_t^{\e, {n}}(p,q) \leq e^{-\alpha r} M_{\alpha}(\mu_0) \|W\|_{C^2} ,
 \end{equation}
we can employ Lemma \ref{lemma_ham} to obtain for fixed $t \in [0,T]$ 
\begin{equation}\label{e:con_momentum1}
\mu_t^{\e, {n}} \nabla_W \mathscr H(\mu_t^{\e, {n}}) \quad \hbox{converges in the distributional sense to} \quad \mu_t^{\e} \nabla_W \mathscr H(\mu_t^{\e})
\end{equation} 
and 
\begin{equation}\label{e:con_momentum2}
\mu_t^{\e, {n}}| \nabla_W \mathscr H(\mu_t^{\e, {n}})| \quad \hbox{converges in the distributional sense to} \quad \mu_t^{\e} | \nabla_W \mathscr H(\mu_t^{\e})|.
\end{equation} 


By Remark \ref{re:bounded_moment}, (iii)  there exists a constant $\bar m$ independent of $t \in [0,T]$ and $k \in \mathbb N$ such that 
\[| \nabla_W \mathscr H(\mu_{[t/h]h}^{\e, {n}})- \nabla_W \mathscr H(\mu_t^{\e, {n}})| =| \nabla W \ast \mu_{[t/h]h}^{\e, n} - \nabla W \ast \mu_{t}^{\e, n}| \leq h \bar m. \]
This, together with Equations (\ref{e:con_momentum1}), (\ref{e:con_momentum2}) and the fact that the total masses of $\{\mu_t^{\e, {n}}\}_{t, n}$ are bounded uniformly in $t$ and $n$ imply that 
\begin{equation}\label{e:con_momentum1b}
\mu_{t}^{\e, {n}} \nabla_W \mathscr H(\mu_{[t/h]h}^{\e, {n}}) \quad \hbox{converges in the distributional sense to} \quad \mu_t^{\e} \nabla_W \mathscr H(\mu_t^{\e})
\end{equation} 
and 
\begin{equation}\label{e:con_momentum2b}
\mu_t^{\e, {n}}| \nabla_W \mathscr H(\mu_{[t/h]h}^{\e, {n}})| \quad \hbox{converges in the distributional sense to} \quad \mu_t^{\e} | \nabla_W \mathscr H(\mu_t^{\e})|.
\end{equation} 
We combine Equations (\ref{disc_ham_eqn}), (\ref{e:con_momentum1b}) and (\ref{e:con_momentum2b}) and use the fact that $(\nabla_W \mathscr H(\mu_t^{\e, {n}}) )$ is bounded uniformly in $t, n$ on compact sets to conclude that  $(\mu^\e_t)_{t \in [0, T]}$ satisfies Equation (\ref{def_eq}).  Thus reasoning as in the proof of Proposition \ref{preserv_exp}, $t \rightarrow M_\alpha(\mu_{t}^{\e})$ is monotone nonincreasing.  The narrow continuity claim of item (ii) now immediately follows from Lemma \ref{lemma_cont_def_cont}.
\end{proof}

\begin{remark}
We note that while in order to obtain existence of dynamics with non--zero $\e$ an \emph{a priori} estimate as in Equation \eqref{est_a_priori} already suffices, more is required to obtain some control which is uniform in $\e$ to retrieve limiting ($\e = 0$) dynamics.  Here is where the dynamical considerations will come into play (in particular, see Lemma \ref{lem_tightness}).  
\end{remark}


%
%
%
%
%
%
%
%
\section{Dynamical Hypothesis}\label{d}

Here we will let $(p, q)$=(position, momentum) denote canonical variables and $p_t, q_t$ denote the associated Lagrangian trajectories (or characteristics) with dynamics dictated by the relevant Hamiltonian.  Indeed, we shall have occasion to consider single particle Hamiltonian dynamics with some Hamiltonian $H$ (the Hamiltonian $\mathscr H$ as defined in Definition \ref{defn_ham} is the integrated total Hamiltonian of the whole system).  We recall the cannonical equations of Hamiltonian dynamics: 
\begin{equation}\label{symp_sys} \dot p = - \frac{\partial H}{\partial q},~~~~~
\dot q = \frac{\partial H}{\partial p}.
\end{equation} 
We will assume that the Hamiltonian is given by 
\[H(p, q,t) = \frac{1}{2} |p|^2 + \Phi(q)+ \Psi(t,q).\] 
Here, $\Phi, \Psi(\cdot,t) \in C^2(\mathbb R^d),$ $\Psi$ is a Borel function defined on $[0,\infty) \times \mathbb R^d$ and there exists $B>0$ such that  
\begin{equation}\label{condition-on-Lip(Psi)} 
|\nabla \Psi(t,q)| < B \quad \forall \; t \geq 0, \; q \in \mathbb R^d.
\end{equation} 
Let $u \in C^2(\mathbb R)$ be such that 
\begin{equation}\label{condition-on-Lip(u)} 
u'(r) \geq  B+ \max_{|q|=r} \langle \nabla \Phi(q), \hat q\rangle, \qquad r \geq 0,  
\end{equation}
where $\hat q|q|=q.$ We consider the auxiliary ``Hamiltonian''  
\[\tilde H(p,q)={1 \over 2} \langle p, \hat q \rangle^2+ \Upsilon(q) \quad \hbox{where} \quad  \Upsilon(q)=u(|q|).\]

%
\begin{lemma}[Single Particle Dynamics]\label{rad_outward}
Consider single particle Hamiltonian dynamics Equation (\ref{symp_sys}). 
We define a $\star$--ring by the condition that 
\begin{equation}\label{e:assumption-on-Up} \Upsilon(q) < \Upsilon(q^\star), ~~~\mbox{for all $|q| > |q^\star|$} 
\end{equation}
and assume that $q^\star \not =0.$ 
Fix $\bar t \in [0,T]$. Then for all characteristics which start out  inside  the region bounded by the $\star$--ring, in the sense that $|q_0|<|q^\star|$, either 
\[ |q_t|  \leq |q^\star|, ~~~\mbox{for all $t \geq \bar t$,}\]
or
\[ |q_t| \rightarrow \infty\]
with nonvanishing radial speed, i.e., there exists some some $t_*>\bar t$ such that $|q_{t_*}| = |q^\star|$ and 
\[ \frac{d|q_t|}{dt}(t) \geq \frac{d|q_t|}{dt}(t_\ast) > 0, ~~~\mbox{for all $t \geq t_* $.}\]
\end{lemma}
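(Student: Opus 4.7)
The plan is to use the auxiliary one-dimensional ``energy''
\(\tilde H(p,q) = \tfrac12 \langle p,\hat q\rangle^2 + u(|q|)\)
as a Lyapunov-like quantity along characteristics of \eqref{symp_sys} and run an escape-over-the-barrier argument at the $\star$-ring. Setting $r_t := |q_t|$ and $\hat q_t := q_t/r_t$, differentiating along a characteristic gives
\[
\dot r_t = \langle \hat q_t, p_t\rangle, \qquad
\ddot r_t = \frac{|p_t|^2 - \dot r_t^2}{r_t} - \langle \hat q_t, \nabla\Phi(q_t)\rangle - \langle \hat q_t, \nabla\Psi(t,q_t)\rangle.
\]
The centrifugal term is nonnegative by Cauchy--Schwarz, and \eqref{condition-on-Lip(Psi)} together with \eqref{condition-on-Lip(u)} yields $u'(r) \geq \langle \hat q, \nabla\Phi(q)\rangle + \langle \hat q, \nabla\Psi(t,q)\rangle$. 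Combined, these produce the basic pointwise estimate $\ddot r_t + u'(r_t) \geq 0$, which is the workhorse of everything that follows.

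Next, since $\tilde H(p_t, q_t) = \tfrac12 \dot r_t^2 + u(r_t)$, we have
\[
\frac{d}{dt}\tilde H(p_t,q_t) = \dot r_t\bigl(\ddot r_t + u'(r_t)\bigr),
\]
so $\tilde H$ is nondecreasing on any interval on which $\dot r \geq 0$; this monotonicity is the substitute for the unavailable true energy conservation. For the dichotomy I would assume the trapping alternative fails, so the trajectory exceeds radius $|q^\star|$ at some time past $\bar t$. Since $|q_0|<|q^\star|$, continuity produces a transversal crossing time $t_*$ with $|q_{t_*}| = |q^\star|$, $\dot r(t_*) > 0$, and $|q_t| > |q^\star|$ just past $t_*$. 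If $\dot r$ were to vanish again at some least $t_2 > t_*$, then $\dot r \geq 0$ on $[t_*, t_2]$ would force
\[
u(|q_{t_2}|) = \tilde H(p_{t_2},q_{t_2}) \geq \tilde H(p_{t_*},q_{t_*}) = \tfrac12 \dot r(t_*)^2 + u(|q^\star|) > u(|q^\star|),
\]
contradicting \eqref{e:assumption-on-Up} since $|q_{t_2}| > |q^\star|$. Therefore $\dot r(t)>0$ for all $t \geq t_*$, and feeding the monotonicity back in produces $\dot r(t)^2 \geq \dot r(t_*)^2 + 2\bigl(u(|q^\star|) - u(|q_t|)\bigr) \geq \dot r(t_*)^2$, which simultaneously gives the asserted lower bound on the radial speed and, by integration, $|q_t|\to\infty$.

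The main obstacle I anticipate is the borderline scenario in which the trajectory first touches the ring \emph{tangentially}, i.e., $\dot r = 0$ at that instant, so that intermediate values do not immediately furnish the transversal $t_*$ needed above. To handle this I would exploit the \emph{strict} inequality $|\nabla\Psi| < B$ in \eqref{condition-on-Lip(Psi)}, which sharpens the paragraph-one estimate to $\ddot r + u'(r) > 0$ strictly and rules out the trajectory lingering on the ring without genuine outward motion; a short-time analysis of the $(r,\dot r)$ phase portrait near such a tangential first contact then lets me pick a nearby time with $|q| = |q^\star|$ and $\dot r > 0$, after which the argument of the preceding paragraph closes the proof.
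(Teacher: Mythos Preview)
Your approach is essentially the same as the paper's: both use $\tilde H = \tfrac12\dot r^2 + u(r)$ and the differential inequality $\ddot r + u'(r) > 0$ against the barrier condition \eqref{e:assumption-on-Up}, and your two-step argument (first $\dot r>0$, then $\dot r\ge\dot r(t_*)$ via $\tilde H$-monotonicity) is a mild variant of the paper's single step.

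One correction to your last paragraph: the resolution of the tangential case is not to ``pick a nearby time with $|q|=|q^\star|$ and $\dot r>0$'' --- no such nearby time exists, since the whole thrust of the lemma is that once $r$ exceeds $|q^\star|$ it never returns. Rather, a tangential first contact is simply \emph{impossible}. The paper makes this clean by setting $t_*=\inf\{t\ge\bar t:|q_t|>|q^\star|\}$: if $\dot r(t_*)=0$, then since $r(t)\le|q^\star|$ for $t$ just below $t_*$, Taylor expansion from the left forces $\ddot r(t_*)\le 0$, whereas your strict inequality together with $u'(|q^\star|)\le 0$ (which follows from \eqref{e:assumption-on-Up}) gives $\ddot r(t_*)>0$, a contradiction. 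That is the entire content of the ``short-time analysis'' you anticipated.
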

\begin{proof} 
We first claim that, provided $\frac{d|q|}{dt} > 0$ on an interval, the quantity $\tilde H$ 
is increasing on that interval.  Using Equations (\ref{condition-on-Lip(Psi)}) and (\ref{condition-on-Lip(u)}), direct computations give 
\begin{equation}\label{e:second-der-of-|q|}
{d^2 |q| \over dt^2} ={ \langle \ddot q , q \rangle \over |q|} + { |\dot q|^2 |q|^2- \langle \dot q , q \rangle^2 \over |q|^3} \geq  { \langle \ddot q , q \rangle \over |q|} = -\langle \nabla \Phi(q) +\nabla \Psi(t,q), \hat q \rangle> -\langle \nabla \Upsilon(q), \hat q \rangle 
\end{equation}
where the last inequality is strict due to (the strict inequality in) Equation (\ref{condition-on-Lip(Psi)}).

As $\Upsilon$ depends only on $|q|$,
\begin{equation}\label{e:second-der-of-|q|2}
\langle \nabla \Upsilon(q) , \dot q \rangle = \langle \nabla \Upsilon(q) , \hat q \rangle \frac{d|q_t|}{dt} 
\end{equation}
and so, by  Equation \eqref{e:second-der-of-|q|}  
\[\frac{d\tilde H}{dt} = {d |q| \over dt} {d^2 |q| \over dt^2} +\langle \nabla \Upsilon(q) , \dot q \rangle=  {d |q| \over dt} 
\Bigl( {d^2 |q| \over dt^2} + \langle \nabla \Upsilon(q) , \hat q \rangle \Bigr)> 0.\] 
Suppose now that it is not the case that $|q_t| \leq |q^\star|$ for all $t \geq \bar t$. Set 
$$
t_\ast=\inf \{t : |q(t)|>|q^\star|, \, t \geq \bar t  \}.
$$ 

\textbf{1.}  Claim: We have 
\[|q(t_\ast)|=|q^\star| \quad \hbox{ and} \quad  {d|q| \over dt}(t_\ast)> 0 
\] 

{\it \hskip 0.2in Proof of claim.} What is obvious is that $|q(t_\ast)|=|q^\star|$ and the derivative of $|q|$ at $t_\ast$ is nonnegative.  Assume on the contrary that the derivative vanishes. Then we necessarily have (by e.g., simple expansion) that  the second derivative of $|q|$ at $t_\ast$ is nonpositive and so since Equation (\ref{e:second-der-of-|q|}) reads 
\begin{equation}\label{e:second-der-of-|q|3} 
{d^2|q|\over dt^2} (t_\ast) > -u'(|q^\star|) 
\end{equation} 
whereas Equation (\ref{e:assumption-on-Up}) yields $u'(|q^\star|) \leq 0$ we have a  contradictiton.  

\textbf{2.}  Claim:  $\frac{d|q|}{dt}(t) \geq \frac{d|q|}{dt}(t_*)$ for all $t \geq t_*$.  

{\it \hskip 0.2in Proof of claim.} Assume on the contrary that 
\[E=\Bigl\{ t \geq t_\ast:  {d |q|\over dt}  (t) < {d |q|\over dt}  (t_\ast) \Bigr\} \neq \emptyset. \] 
Let $t_1$ be the infimum of $E.$ First it is noted that $t_1 > t_*$ since $\frac{d^2|q|}{dt^2}(t_*) > 0$ by Equation \eqref{e:second-der-of-|q|3}.  But then 
$$
{d |q|\over dt}  (t_1) = {d |q|\over dt}  (t_\ast) < {d |q|\over dt}  (t) \quad \forall t \in (t_\ast,t_1)
$$
and so, $|q(t_1)|>|q^\star|.$ Thus, 
\[\tilde H(p(t_1),q(t_1)) < \tilde H(p(t_*), q(t_*)) \]
which is a contradiction, since $\tilde H$ is \textit{increasing} on the interval $[t_*, t_1]$.
\end{proof}

\begin{remark}\label{re:rad_outward} Note that in the proof of Lemma \ref{rad_outward} we have proven, in the last claim, the following general result. If $t_\ast \in [0,T)$ satisfies 
\[|q(t_\ast)|=|q^\star|>0 \quad \hbox{ and} \quad  {d|q| \over dt}(t_\ast)> 0 \] 
then $\frac{d|q|}{dt}(t) \geq \frac{d|q|}{dt}(t_*)$ for all $t \geq t_*$. In particular  $\tilde H$ is increasing on the interval $[t_*, T]$.
\end{remark}

We make final assumptions on $u'$ which will allow us to summarize our observations in this section in the following dynamical hypothesis to which we shall refer later:
\begin{hyp}\label{dyn_hyp} 
We postulate existence of $\Upsilon(q)=u(|q|)$ where $u \in C^2(\mathbb R)$ satisfies Equation (\ref{condition-on-Lip(u)}). We assume that there is a sequence $\{q_L^\star\}_{L=1}^\infty$ such that $ |q_L^\star|$ increases to $\infty$ and for all $q_{L}^{\star}$,
\begin{equation}\label{e:increasing_points}
\Upsilon (q)< \Upsilon(q_L^\star) \quad \hbox{for all } \quad |q|> |q^\star_L|.
\end{equation}
As a matter of notation we will sometimes denote generic elements of the sequence by $L^\star$ or just $L$ when the context is clear.

Let us also define the phase space \emph{cylinders} 
\[ \Omega_{L^\star} = \mathbb R^d \times B_{L^\star},\]
where $B_{L}$ denotes the ball in $\mathbb R^{d}$ of radius $L$ centered at the origin,
and refer to them as \emph{spatial} regions of no return.
\end{hyp}

%
\begin{remark}
\label{DSN}
We note that the existence of unbounded (in $q$) spatial regions of no return is certainly guaranteed by the condition that $\Upsilon(q)$ decreases in $|q|$.  Moreover, if additionally $\Upsilon(q) \sim -|q|^{1+R}$ with $R > 1$, then, inevitably, the unbounded motion will reach infinity in finite time.  \end{remark}

\begin{remark}\label{NY}
We emphasize that since the dynamical hypothesis is some uniform in $\e$ control on the dynamics, the no return condition is inherited in any $\e \rightarrow 0$ limit.   
\end{remark}


%
%
%
%

\section{Consequences of Dynamical Hypothesis}\label{sec_mass_conv}

\subsection{Limiting Hamiltonian ODE}\label{sec_defn-set}

%

As a first consequence of the dynamical hypothesis we will retrieve some limiting dynamics (meaning the relevant equation of continuity driven by the appropriate velocity field).  In light of the content of Lemma \ref{lemma_ham}, we see that we are first to estimate, for fixed $t$, the quantity
$$
\tilde C_r(\mu^{\e}_t,\bar q) := \int_{\mathbb R^D}  \theta_r(p)|\nabla W(\bar q-q)|~d\mu^{\e}_t(p,q),
$$
(in the ensuing we will omit the tilde as it should cause no confusion) where $\mu_t^\e$'s are given by Theorem \ref{exist_def_ODE} and $\theta_r$ is supported outside the ball of radius $r$ (Lemma \ref{lem_tightness}).  Further, we will have to produce some control on the time evolution of the relevant velocity fields, which is now not ``automatic'' since $\e$ is tending to zero (Lemma \ref{le:bound-on-times1}).

Since the up and coming argument requires pulling trajectories back to $\mu_0$, we shall work directly with the time discretized measures which, by construction, satisfy the pushforward equation $\mu_t^{\e, n} = X_t^{\e, n} \# \mu_0$ (see Proposition \ref{preserv_exp} as it applies in the proof of Theorem \ref{exist_def_ODE}).  

%
%


\bigskip

We have the following tightness estimate:

\begin{lemma}\label{lem_tightness}
Let $T > 0$ and let $(\mu_t^{\e, n})_{t \in [0, T]}$ be the time discretized measures as constructed in the proof of Theorem \ref{exist_def_ODE}.   Suppose further that 
\begin{itemize}
\item[$\circ$] $W$ is supported on the ball of radius $a$ around the origin and there is some $B \geq 0$ such that 
\[ |W| \leq B ~~~\mbox{and}~~~ |\nabla W| < B;\]
\item[$\circ$] There is a ``bounding'' potential $\Upsilon$, which is uniform in $\e, n$, corresponding to $\Phi$ and $\Psi^n := W * \mu_t^{\e, n}$ (see Equation \eqref{condition-on-Lip(u)} and the display which follows) satisfying the condition in Hypothesis \ref{dyn_hyp}.  
\end{itemize}

Let $Q > 0$ and let $C_r(\mu_t^{\e, n}, \overline q)$ be defined as in Lemma \ref{lemma_ham}, then for $L$ as in Hypothesis \ref{dyn_hyp} sufficiently large so that (cfr., Definition \ref{defn_ham} for the meaning of $a$) \[ |q_L^\star| > Q + a,\]
we have 
\[ \forall \overline q \in B_Q(0), ~~~\forall t \in [0, T], ~~~\forall \e > 0,~~~\forall n \]
the bound 
\[ C_r(\mu_t^{\e, n}, \overline q) \leq B\left(\mu_0(Q_L^\star) + \mu_0(B_{r^*}^c \times \mathbb R^d)\right).\]
Here $Q_L^\star$ denotes the complement of the cylinder $\Omega_L^\star: = \mathbb R^d \times B_{|q_L^\star|}$ and $r$ is sufficiently large so that (at least)
\[r^* = r-1 - M_L T > 0, \]
where 
\[ M_L = B + \sup_{q \in B_{|q_L^\star|}} |\nabla \Phi(q)|.\]
\end{lemma}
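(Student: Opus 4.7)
My first step would be to reduce $C_r(\mu_t^{\e,n},\bar q)$ to a measure estimate. Since $\nabla W$ is supported in a ball of radius $a$ and $|\nabla W|\le B$, while $\theta_r$ vanishes for $|p|\le r-1$, the hypothesis $|q_L^\star|>Q+a$ ensures that for any $\bar q\in B_Q$ the integrand vanishes outside
$$
A_{\bar q}:=\{(p,q):|p|>r-1,\ q\in B_a(\bar q)\}\ \subset\ (B_{r-1}^c\times\mathbb R^d)\cap\Omega_L^\star.
$$
Hence $C_r(\mu_t^{\e,n},\bar q)\le B\,\mu_t^{\e,n}(A_{\bar q})$, and the task is reduced to bounding $\mu_t^{\e,n}(A_{\bar q})$ by $\mu_0(Q_L^\star)+\mu_0(B_{r^\ast}^c\times\mathbb R^d)$.

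Next I would pull back along the flow. By the pushforward representation in Proposition \ref{preserv_exp}(ii) and the pointwise bound $R_t^{\e,n}\le 1$,
$$
\mu_t^{\e,n}(A_{\bar q})\ \le\ \mu_0\bigl((X_t^{\e,n})^{-1}(A_{\bar q})\cap\mathbb S_t^{\e,n}\bigr).
$$
It now suffices to show that every initial point $(p_0,q_0)$ whose trajectory $(p_s,q_s)=X_s^{\e,n}(p_0,q_0)$ satisfies $(p_t,q_t)\in A_{\bar q}$ must lie in $Q_L^\star\cup(B_{r^\ast}^c\times\mathbb R^d)$. Any $(p_0,q_0)$ with $|q_0|\ge|q_L^\star|$ automatically belongs to $Q_L^\star$, yielding the first term of the bound; the real work is the case $|q_0|<|q_L^\star|$.

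For this core case I would invoke the single-particle dynamical hypothesis. On each step $[kh,(k+1)h)$ the discretized flow is single-particle Hamiltonian with time-dependent effective potential $\Phi(q)+\Psi(s,q)$, where $\Psi(s,q):=(W\ast\mu_{[s/h]h}^{\e,n})(q)$; since $|\mu_t^{\e,n}|\le 1$ and $|\nabla W|\le B$, one has $|\nabla\Psi(s,q)|\le B$ uniformly in $s,\e,n$, so the bounding potential $\Upsilon$ assumed in the lemma satisfies the hypotheses of Lemma \ref{rad_outward}. Because $q_t\in B_a(\bar q)\subset B_{Q+a}$ lies \emph{strictly} inside $B_{|q_L^\star|}$, the dichotomy of Lemma \ref{rad_outward} (together with Remark \ref{re:rad_outward}) rules out escape: any exit of $|q_s|$ across $|q_L^\star|$ at an intermediate time $t'<t$ would force $|q_s|\ge|q_L^\star|$ for all $s\ge t'$, contradicting $|q_t|<|q_L^\star|$. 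Therefore $|q_s|\le|q_L^\star|$ throughout $[0,t]$, which gives $|\dot p_s|\le|\nabla\Phi(q_s)|+|\nabla\Psi(s,q_s)|\le M_L$. Integrating and combining with $|p_t|>r-1$ yields $|p_0|>r-1-M_L T=r^\ast$, placing $(p_0,q_0)\in B_{r^\ast}^c\times\mathbb R^d$.

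The main obstacle is this third step: one must verify that the dichotomy of Lemma \ref{rad_outward} survives the piecewise-constant-in-time structure of the discretized velocity field (which it does, since that lemma's hypothesis already admits Borel time dependence of $\Psi$), and one must exploit the strict inequality $|q_t|<|q_L^\star|$ afforded by the gap between $Q+a$ and $|q_L^\star|$ in order to rule out intermediate excursions. The uniformity of $\Upsilon$ in $(\e,n)$, built into the standing assumption of the lemma, is precisely what makes the resulting estimate uniform in the discretization and regularization parameters.
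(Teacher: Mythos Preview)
Your proposal is correct and follows essentially the same approach as the paper: pull back to $\mu_0$ via the representation formula (with $R_t^{\e,n}\le 1$), split according to whether the initial position lies inside or outside $B_{|q_L^\star|}$, and for the inside case use the no-return dichotomy of Lemma~\ref{rad_outward} to force $|q_s|\le |q_L^\star|$ on $[0,t]$, whence the uniform bound $|\dot p_s|\le M_L$ and the momentum estimate $|p_0|\ge r^\ast$. The paper organizes the same argument by naming three sets $\mathcal S,\mathcal G,\mathcal O$ of initial data (stay inside, start inside but exit, start outside) and observing that $\nabla W(\bar q-q_t)$ vanishes on $\mathcal G$; your version instead first restricts to the support $A_{\bar q}$ of the integrand and then argues that its preimage under $X_t^{\e,n}$ cannot meet the analogue of $\mathcal G$, which amounts to the same thing.
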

\begin{proof}
We have that 
\[\begin{split} C_r(\mu_t^{\e, n}, \overline q) &= \int |\nabla W(\overline q - q_t)| \theta_r(p_t)~d\mu_t^{\e, n}\\
&= \int \theta_r(p_t) |\nabla W(\overline q - q_t)| R_t^{\e, n}(p_t, q_t)~d\mu_0(p, q);
\end{split}\]
here it is re--emphasized that the validity of the pull back to $\mu_0$ has been assured by the manner in which the measures $\mu_t^{\e, n}$ have been constructed.  By invoking the dynamical condition, for trajectories starting inside $\Omega_L^\star$ this quantity can be bounded depending on whether the position marginal of the trajectory has ever left $B_{|q_L^\star|}$ by time $t$: If the trajectory never left, then the acceleration can be bounded by $M_L$ whereas if the trajectory leaves, then it is guaranteed not to return by Lemma \ref{rad_outward} and so since $|q_L^\star|$ is outside the interaction range of any point in $B_Q$, $\nabla W(\overline q - q_t) = 0$.  

More precisely, let us partition the space of all possible \emph{initial} conditions in phase space into three sets: 
\[\begin{split}\mathcal S &= \{(p, q) \in \mathbb R^D \mid |q_s| \leq |q_L^\star|~~~\forall s \in [0, t]\}\\
\mathcal G &= \{(p, q) \in \mathbb R^D \mid |q| < |q_L^\star| \mbox{~~~and $\exists \overline t \in (0, t)$ such that $|q_{\overline t}| > |q_L^\star|$}\}\\
\mathcal O &= \{ (p, q) \in \mathbb R^D \mid |q| \geq |q_L^\star|\}.\end{split}\]
Since $|q_t| \geq |q_L^\star| > Q + a$ for $q \in \mathcal G$ so that $\nabla W(\overline q - q_t) \equiv 0$ there, it is the case that we have 
\begin{equation}\label{penult}\begin{split} C_r(\mu_t^{\e, n}, \overline q) &= \int_{\mathcal S \cup \mathcal O} \theta_r(p_t) |\nabla W(\overline q - q_t)|R_t(p_t, q_t)~d\mu_0(p, q)\\
&\leq B \mu_0(Q_L^\star) + B\int_\mathcal S \theta_r(p_t)~d\mu_0(p, q).
\end{split}\end{equation}
Here in the last inequality we have used that $|\nabla W| \leq B$ and $|R_t| \leq 1$.

Since all measures under consideration have mass bounded by one, it is immediate that for all $t \in [0, T]$,
\[ |\nabla W * \mu_t^{\e, n}| < B,~~~|W*\mu_t^{\e, n}| \leq B, \]
Therefore, for $(p, q) \in \mathcal S$, we have that by construction of $\Upsilon$, 
\[ \left|\frac{dp_s}{dt}\right| = \left|\nabla (\Phi + W *\mu_t^{\e, n})(q_s)\right| \leq M_L, ~~~\forall s \in [0, t], \]
from which it directly follows that 
\[ |p_t| \leq M_L t + |p_0|.\]
On the other hand, for the trajectory to contribute to the last integral in \eqref{penult} we must have $\theta_r (p_t) > 0$, so altogether we have 
\[ r - 1 \leq |p_t| \leq M_L t + |p_0|, \]
so that 
\[ |p_0| \geq r - 1 - M_L t = r^*,\]
yielding the conclusion.
\end{proof}
\begin{remark}\label{e_tight}
Let us fix $t \in [0,T]$. 
\begin{enumerate}[(i)]
\item In reference to the above lemma, for all $t \in [0, T]$ the entire bound now resides with $\mu_0$ which is a particular (finite) measure.  Thus we may choose $L^\star$ large so that the first term $\mu_0(Q_L^\star)$ is small and then, by choosing $r$ large, the second term can be made small.  Not only is this uniform in $t$, we further note that the estimates of Lemma \ref{lem_tightness} are uniform in the discretization $n$ and also $\e$, as the dynamical condition uniformly bounds the dynamics. 

\item Let us set 
\[ F^{\e, n}_t:= \nabla W * \mu^{\e,n}_t, \quad F^\e_t = \nabla W * \mu_t^\e\]
and suppose that $\mu_t^{\e, n} \rightharpoonup \mu_t^\e$.  We record that the above Lemma in particular implies that the hypothesis of Lemma \ref{lemma_ham} are satisfied and hence $F_t^{\e, n} \rightarrow F_t^\e$ uniformly on compact sets.  (Recall that we have already established this via different means in the proof of Theorem \ref{exist_def_ODE}).  
\end{enumerate}
\end{remark}

Next we will acquire the required control on the time evolution of the (Hamiltonian) velocity fields.  
Let us denote by
\[v^{\varepsilon,n}_t(p,q)=\bigl[ -\nabla (\Phi+ W \ast \mu^{\varepsilon,n}_{[{ t \over h_n}]h_n}  )(q),p \bigr]:= [-\nabla \Phi(q) + F^{\e, n}_{t_n}(q), p] \]
(where by slight abuse of notation, $t_n= t_n(t)$ is the nearest time discretization point) the relevant velocity field for the time discretized measures $\mu_t^{\e, n}$.

Suppose that $M_\alpha(\mu_0) < \infty$ for some $\alpha > 0$. Formally, from the deficient equation of continuity, we have 
\[ 
\partial_t F^{\varepsilon, n}_t(\bar x)= \int_{\mathbb R^D}\bigl(p\cdot \nabla^2 W(\bar q-q)  -\varepsilon |v_t^{\varepsilon, n}(x)|  \nabla W(\bar q-q) \bigr) ~d \mu_{t}^{\varepsilon, n}(x)=: A^{\varepsilon, n}_{t,2}-\varepsilon A^{\varepsilon, n}_{t,1},
\] 
where the above $\nabla^2$ is notation for the matrix of second derivatives.
This requires justification since, strictly speaking, $W$ is not compactly supported in phase space.  However, since $\e > 0$, we have exponential moments for $\mu_t^{\e, n}$ (see Proposition \ref{preserv_exp}, (iii); here, in hindsight we may regard the $v_t^{\e, n}$ used in the construction of $\mu_t^{\e, n}$ as prescribed) and all relevant functions are $C^1$ so item (iii) of Remark \ref{re:bounded_moment} can be applied.

Recalling once again that for $n < \infty$ we may pullback to the initial measure, we rewrite the above as
\[A^{\varepsilon, n}_{t,2}=\int_{\mathbb R^D}p_t\cdot \nabla^2 W(\bar q-q_t)  R^{\varepsilon, n}_t(p_t, q_t) ~d\mu_0(p,q)  \]
and 
\[A^{\varepsilon, n}_{t,1} = \int_{\mathbb R^D} |v_t^{\varepsilon,n}(p_t,q_t)|  \nabla W(\bar q-q_t) R^{\varepsilon,n}_t(p_t, q_t) ~d\mu_0(p,q). \]
Let $\{q^\star_L\}_{L=1}^\infty$ be the sequence of no--return points as  in Hypothesis \ref{dyn_hyp}. We set $q^\star_0=0.$ Given $q \in \mathbb R^d$,  there exists a unique $L$ such that $|q^\star_{L}| \leq |q| < |q^\star_{L + 1}|.$ Let us define  
\[\mathcal R(q)= B+ \max_{|q'| \leq |q^\star_{L+1}|} |\nabla \Phi(q')| \qquad \forall \;\; |q^\star_L| \leq |q| < |q^\star_{L+1}|. \]   
For the potentials that we have in mind, under the assumption of an exponential moment for $\mu_{0}$, we certainly have that 
\begin{equation}\label{eq:0}
\int_{\mathbb R^{2d}} \mathcal R(q)~d\mu_0< \infty.
\end{equation}

%
%
\begin{lemma} \label{le:bound-on-times1} Let $Q>|q^\star_1|$ and let $D_Q \subset \mathbb R^{D}$ be the ball of radius $Q$, centered at the origin.  Let $\mu_t^{\e, n}$, $v_t^{\e, n}$ and $\mu_0$ be as described and suppose that Equation (\ref{eq:0}) holds.  (In particular, for the polynomially bounded potentials as discussed in Definition \ref{defn_ham}, and for $\mu_{0}$ with an exponential moment, this is the case.)  Then there exists a constant $C_Q$ independent of $\e, n$ such that  
\begin{equation}\label{vb1}\sup_{\bar x,t }  \{ |\nabla F^{\varepsilon, n}_t(\bar x)| \; | \;\bar x \in D_ Q, t \in [0,T] \}\leq C_{Q}  \end{equation}
and  
\begin{equation}\label{vb2}\sup_{\bar x,t }  \{ |\partial_t F^{\varepsilon, n}_t(\bar x)| \; | \;\bar x \in D_ Q, t \in [0,T] \}\leq C_{Q}.  \end{equation} 
\end{lemma}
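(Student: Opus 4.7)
The bound (\ref{vb1}) is immediate: differentiating under the integral,
\[
\nabla F^{\varepsilon,n}_t(\bar x) = \int_{\mathbb R^D} \nabla^2 W(\bar q-q) \, d\mu_t^{\varepsilon,n}(p,q),
\]
so that $|\nabla F^{\varepsilon,n}_t(\bar x)| \leq \|W\|_{C^2}$, using $|\mu_t^{\varepsilon,n}| \leq |\mu_0| = 1$ from Theorem \ref{exist_def_ODE}(ii). For (\ref{vb2}) I start from the decomposition $\partial_t F^{\varepsilon,n}_t = A^{\varepsilon,n}_{t,2} - \varepsilon A^{\varepsilon,n}_{t,1}$ derived in the discussion preceding the lemma, and from the pullback representations of $A^{\varepsilon,n}_{t,1}$ and $A^{\varepsilon,n}_{t,2}$ as integrals against $\mu_0$. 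Since $\nabla W$ and $\nabla^2 W$ are supported in the ball of radius $a$, only trajectories with $|q_t-\bar q| \leq a$ (in particular $|q_t| \leq Q+a$) contribute. The task therefore reduces to bounding $|p_t|$ uniformly in $\varepsilon, n$ along such contributing trajectories.

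This is where Hypothesis \ref{dyn_hyp} comes in. Fix $L_0$ such that $|q_{L_0}^\star| > Q+a$ and split the initial data $(p_0,q_0)$ according to whether $|q_0| < |q_{L_0}^\star|$ or $|q_0| \geq |q_{L_0}^\star|$. In the first set I apply Lemma \ref{rad_outward} with $q^\star = q_{L_0}^\star$: either $|q_s| \leq |q_{L_0}^\star|$ on $[0,t]$, in which case $|\dot p_s| = |\nabla(\Phi + W*\mu^{\varepsilon,n}_{t_n})(q_s)| \leq M_{L_0}$ and hence $|p_t| \leq |p_0| + M_{L_0} T$; or the trajectory exits $B_{|q_{L_0}^\star|}$ and by the no-return conclusion stays outside, so $|q_t| > |q_{L_0}^\star| > Q+a$ and the integrand vanishes. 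In the second set, let $L \geq L_0$ satisfy $|q_L^\star| \leq |q_0| < |q_{L+1}^\star|$ and invoke Lemma \ref{rad_outward} with $q^\star = q_{L+1}^\star$: the parallel dichotomy gives either $|\dot p_s| \leq \mathcal R(q_0)$ on $[0,t]$ and $|p_t| \leq |p_0| + \mathcal R(q_0)\,T$, or the trajectory escapes $B_{|q_{L+1}^\star|}$ and never returns to $\{|q| \leq Q+a\}$. Writing $\widetilde{\mathcal R}(q) := \max(M_{L_0}, \mathcal R(q))$, I conclude that $|p_t| \leq |p_0| + \widetilde{\mathcal R}(q_0)\, T$ on every contributing trajectory.

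Plugging this estimate into $A^{\varepsilon,n}_{t,2}$ and using $|\nabla^2 W| \leq \|W\|_{C^2}$ together with $R^{\varepsilon,n}_t \leq 1$ yields
\[
|A^{\varepsilon,n}_{t,2}| \leq \|W\|_{C^2} \int_{\mathbb R^D} \bigl( |p_0| + \widetilde{\mathcal R}(q_0)\, T \bigr) \, d\mu_0(p,q),
\]
which is finite by the exponential moment hypothesis on $\mu_0$ together with Equation (\ref{eq:0}). For $A^{\varepsilon,n}_{t,1}$, the inequality $|v^{\varepsilon,n}_t(p_t,q_t)| \leq |p_t| + M_{L_0} + B$ on the contributing set (using $|q_t| \leq Q+a < |q_{L_0}^\star|$) gives the analogous bound
\[
\varepsilon\, |A^{\varepsilon,n}_{t,1}| \leq \varepsilon B \int_{\mathbb R^D} \bigl( |p_0| + \widetilde{\mathcal R}(q_0)\, T + M_{L_0} + B \bigr) \, d\mu_0(p,q),
\]
which is similarly finite (and the prefactor $\varepsilon \in (0,1]$ can be absorbed). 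Both bounds depend only on $Q$, $T$, $\mu_0$, $\Phi$, $W$, so they are independent of $\varepsilon$ and $n$, yielding (\ref{vb2}). The principal obstacle is precisely the pointwise bound on $|p_t|$: once the partition adapted to the no-return cylinders of Hypothesis \ref{dyn_hyp} is set up and the dichotomy from Lemma \ref{rad_outward} is applied, the remainder reduces to integrating moments of $\mu_0$.
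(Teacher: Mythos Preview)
Your proposal is correct and follows essentially the same route as the paper: the paper also handles (\ref{vb1}) trivially via $\nabla F^{\varepsilon,n}_t = \nabla^2 W * \mu^{\varepsilon,n}_t$, and for (\ref{vb2}) it partitions the initial data into the three sets $\mathcal S$, $\mathcal G$, $\mathcal O$ (exactly your two cases, with the first split by the Lemma \ref{rad_outward} dichotomy), obtains the same momentum bounds $|p_t| \leq |p_0| + T\mathcal R(q_0)$ on contributing trajectories, and integrates against $\mu_0$ using (\ref{eq:0}). One cosmetic difference: to bound $|v_t(p_t,q_t)|$ on contributing trajectories the paper uses the cruder estimate $|\dot p_t| \leq \mathcal R(q_0)$ (valid on the whole interval $[0,t]$), whereas you exploit $|q_t| \leq Q+a$ to bound $|\dot p_t|$ by the $Q$--dependent constant $M_{L_0}$ at the terminal time; both are fine.
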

\begin{proof} Since $\nabla F^{\varepsilon, n}_t= \nabla^2 W \ast \mu^{\e, n}_t$, the first inequality of the lemma follows as $\nabla^2 W$ is bounded and $\mu^{\e, n}_t$ is a finite measure. It remains to show the second statement which requires a refinement of the proof of Lemma \ref{lem_tightness}. 

Choose $L>0$ such that $|q^\star_L|>Q+a$ and let $\mathcal S,$ $\mathcal G$ and $\mathcal O$ be the sets defined in subsection \ref{sec_defn-set} and corresponding to $q^\star_L$.  Each one of the above sets depends on $\e, n$ as $(p_t, q_t)$ is the flow associated to $v_t^{\e, n}.$ But to alleviate the notation, we don't display these dependences.  Let us provide preliminary estimates for the cases $(p, q) \in \mathcal S, \mathcal G, \mathcal O$: 

If $(p,q) \in \mathcal G$ then $|q_t| > |q^\star_L| >Q+a$ and so, 
\begin{equation}\label{eq:1}
|\nabla W(\bar q-q_t) | |v_t(p_t,q_t)|=0.
\end{equation}

If $(p,q) \in \mathcal S$ then 
\[|\dot p_s| \leq B+ \max_{|q'| \leq |q^\ast_{L}|} |\nabla \Phi(q')|={\mathcal R}(q^\star_{L-1})\] 
for all $s \in [0,t]$ and so, 
\[|v_t(p_t,q_t)|^2= |p_t|^2+ |\dot p_t|^2 \leq \bigl(|p|+T{\mathcal R}(q^\star_{L-1})\bigr)^2 +{\mathcal R}(q^\star_{L-1})^2. \] 
Hence, 
\begin{equation} \label{eq:2} 
|\nabla W(\bar q-q_t) | |v_t(p_t,q_t)| \leq ||\nabla W|| \sqrt{\bigl(|p|+T{\mathcal R}(q^\star_{L-1})\bigr)^2 +{\mathcal R}(q^\star_{L-1})^2}.
\end{equation} 

Finally, consider $(p,q) \in \mathcal O$ and let $L_1$ be such that $|q_{L_1}^\ast | \leq |q| <  |q_{L_1+1}^\ast|$. Note that $L_1 \geq L.$ Suppose that there exists $s \in [0,t)$ such that $|q_s| >  |q_{L_1+1}^\star |$. Let $t_1$ be the smallest such $s$. We have $|q_s| >  |q_{L_1+1}^\star|$ for all $s \in (t_1, t)$ and so, $\nabla W(\bar q-q_t) =0$ for $s \in (t_1, t)$.  If $|q_s| \leq  |q_{L_1+1}^\star|$ for all $s \in [0,t]$ then $|\dot p_s| \leq \mathcal R(q)$ for all $s \in [0,t]$ and so, 
\[|p_t| \leq |p|+t \mathcal R(q). \] 
We therefore have
\[|v_t(p_t,q_t)|^2= |p_t|^2+ |\dot p_t|^2 \leq (|p|+T \mathcal R(q))^2 +\mathcal R(q)^2 \] 
and conclude that 
\begin{equation} \label{eq:3} 
|\nabla W(\bar q-q_t) | |v_t(p_t,q_t)| \leq ||\nabla W|| \sqrt{(|p|+T \mathcal R(q))^2 +\mathcal R(q)^2}.
\end{equation} 

Since by  Equation (\ref{eq:1}) we need not consider $(p, q) \in \mathcal G$ we have
\[A_{t,1}=\int_{\mathcal S \cup \mathcal O} \nabla W(\bar q-q_t) |v_t(p_t,q_t)| R_t(p_t, q_t) ~d\mu_0(p,q).  \]
We can now use Equations (\ref{eq:0}), (\ref{eq:2}) and (\ref{eq:3}) to conclude that 
\begin{equation} \label{eq:first-time-est1} 
\sup_{t , |\bar x| \leq Q}|A_{t,1}(\bar x)|=: C_{1,Q}<\infty.
\end{equation} 
Similar arguments yield 
\begin{equation} \label{eq:first-time-est2} 
\sup_{t , |\bar x| \leq Q}|A_{t,2}(\bar x)|=: C_{2, Q}<\infty.
\end{equation} 
Finally it is noted that these bounds are independent of $\e, n$ since all estimates have been performed with the ``reference'' measure $\mu_0$, regardless of $\e, n$; in particular, while the sets $\mathcal G, \mathcal S, \mathcal O$ themselves may depend on $\e, n$, once the position/momentum bounds have been obtained -- independently of $\e, n$, the measures of these sets are all estimated by the full measure.  This concludes the proof of the theorem.
\end{proof}
 
The required $\e$ tending to zero convergence of velocity field result now follows:
 
 \begin{cor}\label{lim_uc}
Let $K \subset \mathbb R^D$ be a compact set and suppose that $\mu_t^{\e, n} \rightharpoonup \mu_t$ subsequentially for some $\mu_t$ (by this we mean that along some sequence $(\e_k, n_k) \rightarrow (0, \infty)$ we have that $\mu_t^{\e_k, n_k} \rightharpoonup \mu_t$).  Then $v^{\e, n}_t$ converges uniformly to $v_t: = [- \nabla (\Phi + F_t)(q), p]$ on $K \times [0, T]$ (where $F_t = \nabla W * \mu_t$) and consequently, 
\[ v_t^{\e, n} \mu_t^{\e, n} \rightharpoonup v_t \mu_t\]
in the sense of distribution.
\end{cor}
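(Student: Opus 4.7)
The plan is as follows. Writing $t_n := [t/h_n]h_n$ for the associated discretization time, observe that
\[ v^{\e,n}_t(p,q) - v_t(p,q) \;=\; \bigl[\, F^{\e,n}_{t_n}(q) - F_t(q),\; 0\,\bigr], \]
so uniform convergence of $v^{\e,n}_t$ to $v_t$ on $K \times [0,T]$ reduces to showing that $(t,\bar q) \mapsto F^{\e,n}_{t_n}(\bar q)$ converges uniformly to $(t,\bar q) \mapsto F_t(\bar q)$ on $[0,T] \times D_Q$, where $Q$ is chosen so that the $q$-projection of $K$ lies in $D_Q$.

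First I would invoke Lemma \ref{le:bound-on-times1}, which yields uniform-in-$\e,n$ bounds on $|\nabla F^{\e,n}_t(\bar q)|$ and $|\partial_t F^{\e,n}_t(\bar q)|$ over $D_Q \times [0,T]$, together with the trivial bound $|F^{\e,n}_t| \leq B$ (since $|\nabla W| \leq B$ and the total masses are at most $1$). By Ascoli--Arzel\`a the family $\{F^{\e,n}\}_{\e,n}$ is relatively compact in $C([0,T]\times D_Q;\mathbb R^d)$. Along the subsequence $(\e_k, n_k)$ on which $\mu^{\e_k,n_k}_t \rightharpoonup \mu_t$, extract a further subsequence so that $F^{\e_k,n_k}_t(\bar q) \to G_t(\bar q)$ uniformly on $[0,T]\times D_Q$. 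To identify $G$ with $F$, I would fix $t \in [0,T]$ and apply Lemma \ref{lemma_ham} to $\mu^{\e_k,n_k}_t \rightharpoonup \mu_t$; the tightness-in-$p$ hypothesis required there is precisely what Lemma \ref{lem_tightness} supplies, uniformly in $\e,n$ (cf.\ Remark \ref{e_tight}), and one concludes $F^{\e_k,n_k}_t(\bar q) \to F_t(\bar q)$ pointwise in $\bar q$. Hence $G_t \equiv F_t$; as this limit is independent of subsequence, the full sequence $F^{\e_k,n_k} \to F$ uniformly on $[0,T]\times D_Q$. To replace $F^{\e_k,n_k}_t$ by $F^{\e_k,n_k}_{t_{n_k}}$, I invoke the time-derivative bound of Lemma \ref{le:bound-on-times1}:
\[ |F^{\e_k,n_k}_{t_{n_k}}(\bar q) - F^{\e_k,n_k}_t(\bar q)| \leq C_Q |t_{n_k} - t| \leq C_Q h_{n_k} \longrightarrow 0, \]
uniformly in $(t,\bar q) \in [0,T]\times D_Q$. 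This gives the uniform convergence $v^{\e_k,n_k}_t \to v_t$ on $K \times [0,T]$ asserted by the corollary.

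For the consequence that $v^{\e,n}_t\,\mu^{\e,n}_t \rightharpoonup v_t\,\mu_t$ in the sense of distribution, test against $\varphi \in C_c^\infty((0,T)\times\mathbb R^D)$ and split
\[ \int_0^T \!\int \varphi \cdot (v^{\e,n}_t - v_t)\, d\mu^{\e,n}_t \, dt \;+\; \int_0^T \!\int \varphi \cdot v_t \, (d\mu^{\e,n}_t - d\mu_t) \, dt. \]
The first term vanishes in the limit by the uniform convergence above on the compact support of $\varphi$, combined with the uniform mass bound $M_0(\mu^{\e,n}_t) \leq 1$. The second tends to zero by applying the $t$-by-$t$ distributional convergence $\mu^{\e,n}_t \rightharpoonup \mu_t$ against the $C_c$ test function $\varphi\, v_t$ (which is compactly supported and continuous since $F_t = \nabla W * \mu_t$ is), followed by dominated convergence in $t$. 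The main obstacle is coordinating Ascoli--Arzel\`a (uniform-in-$\e,n$ equicontinuity from Lemma \ref{le:bound-on-times1}) with the pointwise-in-$\bar q$ identification of the limit furnished by Lemma \ref{lemma_ham}, and verifying that the tightness-in-$p$ condition required by that lemma survives the subsequential limit; this is exactly guaranteed by the uniform-in-$\e,n$ character of the dynamical estimates in Lemma \ref{lem_tightness}.
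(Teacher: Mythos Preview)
Your proposal is correct and follows essentially the same route as the paper: you invoke Lemma~\ref{le:bound-on-times1} for equicontinuity in both space and time, apply Ascoli--Arzel\`a to extract a uniform limit, identify that limit as $F_t$ via Lemma~\ref{lemma_ham} with the tightness input from Lemma~\ref{lem_tightness} (cf.\ Remark~\ref{e_tight}), and then absorb the discretization error $|F^{\e,n}_{t_n}-F^{\e,n}_t|\leq C_Q|t_n-t|$. The only addition is that you spell out the two-term splitting for the distributional convergence $v^{\e,n}_t\mu^{\e,n}_t\rightharpoonup v_t\mu_t$, which the paper simply asserts follows immediately.
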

\begin{proof} 
Let first recall that $t_n= t_n(t)$ is the nearest time discretization point to $t$.  It is sufficient to show that $F_{t_n}^{\e, n}$ converges to $F_t$ uniformly (from this the distributional convergence immediately follows).  Let us first observe that $F_{t_n}$ is piecewise constant and (only) agrees with $F_t$ at time discretization points.  

Notwithstanding, we begin by showing that $F_t^{\e, n}$ converges to $F_t$ uniformly.  To this end, we have by Lemma \ref{le:bound-on-times1} that the (phase) spatial and time derivatives are bounded and therefore the family $(F_t^{\e, n})$ is pre--compact on $C(K \times [0, T])$ by the Arzela--Ascoli theorem (and all functions in question are uniformly bounded) so there exists a subsequential limit $\tilde F_t$.  On the other hand, inputing the tightness estimate from Lemma \ref{lem_tightness} (in particular see Remark \ref{e_tight}, (i)) into Lemma \ref{lemma_ham}, we conclude that $\tilde F_t = F_t$, so in particular, we have convergence along the original $(\e_k, n_k)$ sequence.  

Finally, let us take into account the discretization: Since by Lemma \ref{le:bound-on-times1} \[ |F_{t_n}^{\e, n} - F_t| \leq |F_{t_n}^{\e, n} - F_t^{\e, n}| + |F_t^{\e, n} - F_t| \leq C_Q |t_n - t| + |F_t^{\e, n} - F_t|\]
(with $Q > 0$ such that $K \subset D_Q$) and $|t_n - t| \rightarrow 0$,  as $n \rightarrow \infty$, the result follows.
\end{proof}

Now we can repeat the logic of the proof of Theorem \ref{exist_def_ODE} to obtain existence of solution to the (limiting) Hamiltonian ODE: 

\begin{thm}[Existence of Solution to Hamiltonian ODE]\label{exist_ham_ODE}
For fixed $0 < T < \infty$ and $\alpha_0 > 0$ let $\mu_0 \in M_{\alpha_0}$ denote some initial Borel measure on $\mathbb R^D$ of finite total mass and let $\mathscr H$ denote the Hamiltonian in Definition \ref{defn_ham}.  Assume for simplicity that the total mass of $\mu_0$ is $1.$ Then there exists a distributional limit of $(\mu_t^{\e, n})_{t \in [0, T]}$ along some subsequence $(\e_k, n_k)$, denoted $( \mu_t)_{t \in [0, T]}$ starting at $\mu_0$ such that
\begin{itemize}
\item[(i)] $t \rightarrow \mu_t \in \mathscr M$ is distributionally continuous and $M_0(\mu_t) \leq 1$ for $t \in [0,T]$.

\item[(ii)] $(\mu_t)_{t \in [0, T]}$ satisfies the  Hamiltonian ODE 
\begin{equation} \label{def_eqnew} 
\partial_t \mu_t + \nabla \cdot (J \nabla_{W} \mathscr H(\mu_t)\mu_t) = 0.
\end{equation}
\end{itemize}
\end{thm}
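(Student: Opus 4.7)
The plan is to extract subsequential distributional limits of the discretized deficient flows $(\mu_t^{\e, n})_{t \in [0,T]}$ produced in the proof of Theorem \ref{exist_def_ODE} and to pass to the limit $(\e, n) \to (0, \infty)$ in the discretized equation \eqref{disc_ham_eqn}. Since the uniform mass bound $M_0(\mu_t^{\e, n}) \leq 1$ is preserved throughout the construction, Banach--Alaoglu combined with a diagonal argument over a countable dense set $\mathscr D \subset [0, T]$ and over a sequence $(\e_k, n_k) \to (0, \infty)$ yields distributional limits $\mu_t$ for $t \in \mathscr D$.

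To extend the limit to every $t \in [0, T]$ and to establish distributional continuity of $t \mapsto \mu_t$, I would reproduce the Lipschitz-in-$t$ argument of Proposition \ref{narrow_conv}. For $\varphi \in C_c^\infty(\mathbb R^D)$ with support in a fixed open ball $D$, Remark \ref{rem_finite} supplies a uniform Lipschitz estimate on $t \mapsto \int \varphi \, d\mu_t^{\e, n}$ as soon as $v_t^{\e, n}$ is locally bounded uniformly in $\e, n$ on $D$. This condition is provided directly by Lemma \ref{le:bound-on-times1}: the velocity field $v_t^{\e, n}(p, q) = [-\nabla(\Phi + F_{t_n}^{\e, n})(q), p]$ has momentum component controlled by the compact projection and a position component dominated by the $\e, n$-uniform sup bound on $F_{t_n}^{\e, n}$ together with the continuous function $\nabla \Phi$.

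With the family $(\mu_t)_{t \in [0, T]}$ in hand, I would test the discretized equation \eqref{disc_ham_eqn} against $\varphi \in C_c^\infty((0, T) \times \mathbb R^D)$ and take $(\e, n) \to (0, \infty)$ term by term. The time-derivative term converges by dominated convergence from pointwise-in-$t$ distributional convergence and the uniform mass bound. The transport term converges to $\int_0^T \int \langle \nabla \varphi, v_t \rangle \, d\mu_t \, dt$ with $v_t = J \nabla_W \mathscr H(\mu_t)$, as a consequence of Corollary \ref{lim_uc}, which combines uniform-on-compact convergence $v_t^{\e, n} \to v_t$ with distributional convergence of $\mu_t^{\e, n}$ to $\mu_t$. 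Finally, the dissipation term vanishes since $\e \to 0$ while $|v_t^{\e, n}|$ is bounded on the compact support of $\varphi$ uniformly in $\e, n$ (again via Lemma \ref{le:bound-on-times1}) and the $\mu_t^{\e, n}$ have uniformly bounded total mass. The conclusion $M_0(\mu_t) \leq 1$ is inherited from $M_0(\mu_t^{\e, n}) \leq 1$ by testing against a smooth approximation of the indicator of a ball and letting the radius tend to infinity.

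The main obstacle is the passage to the limit in the nonlinear transport term. Without some strong control on the velocity fields $v_t^{\e, n}$, one could only extract a subsequential weak limit of the product $v_t^{\e, n} \mu_t^{\e, n}$, which need not agree with $v_t \mu_t$. This difficulty is resolved precisely by Corollary \ref{lim_uc}, which itself rests on the uniform tightness-in-$p$ estimate of Lemma \ref{lem_tightness} and the time-regularity control of Lemma \ref{le:bound-on-times1} -- both genuine consequences of Hypothesis \ref{dyn_hyp} that mass cannot return from infinity. With those ingredients already in place, what remains is essentially an assembly argument mirroring the $n \to \infty$ passage carried out in Theorem \ref{exist_def_ODE}, now applied simultaneously to the pair $(\e, n)$.
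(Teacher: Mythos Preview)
Your proposal is correct and follows essentially the same route as the paper: extract distributional limits via the Proposition \ref{narrow_conv} machinery (Banach--Alaoglu plus a diagonal argument over a dense time set, extended by the uniform-in-$(\e,n)$ Lipschitz estimate), then invoke Corollary \ref{lim_uc} to pass the nonlinear transport term to the limit, with the dissipation term vanishing since $\e \to 0$ against uniformly bounded integrands. The paper's proof is terser but cites exactly the same ingredients and explicitly refers the reader back to the $n\to\infty$ passage in Theorem \ref{exist_def_ODE}, which is precisely the assembly you describe.
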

\begin{proof}For $\e>0$ we let  $(\mu^{\e, n}_t)_{t \in [0, T]}$ be the time discretized solutions from the proof of Theorem \ref{exist_def_ODE} with initial data $\mu_0$.  It follows from the reasoning in the proof of Proposition \ref{narrow_conv} that we have a distributional limiting curve  $(\mu_t)_{t \in [0, T]} \subset \mathscr M$ which is distributionally continuous; here we have taken $\mu_{t_k}^{\e_\ell, n_\ell} \rightharpoonup \mu_{t_k}$ for a dense set of times in $[0, T]$ and the rest of the argument is identical. 

That $M_0(\mu_t) \leq 1$ follows, after a small argument, from distributional convergence (since $M_0(\mu_t^{\e, n})\leq 1$ for all $\e, n$, by the same reasoning as used in the proof of Theorem \ref{exist_def_ODE}).  Finally, Corollary \ref{lim_uc} gives the necessary convergence of the relevant velocity fields to yield the limiting dynamics (again we refer the reader to the proof of Theorem \ref{exist_def_ODE}).  

\end{proof} 


\subsection{Closeness of Trajectories and Representation Formulae}

In the ensuing subsection we will need stronger properties of $\mu_t^\e$ and associated trajectories.  The key result in this subsection is a pullback formula for the measures $\mu_t^\e$ (Lemma \ref{rep_form}).  As a consequence we will immediately be able to extract a limiting Hamiltonian ODE statement which does not explicitly involve time discretization (Theorem \ref{ett}).  Let us first review the setting of Proposition \ref {prop:erz}:

\bigskip

Let $v^n_t:\mathbb R^D \times [0,T] \rightarrow \mathbb R^D$ be a sequence of Borel maps such that for each compact set $K \subset \mathbb R^D$, the $f_K^n $ are in $L^\infty(0,T)$, where 
\[f_K^n(t)= \sup_{x \in K} |v^n_t| + \text {Lip}(v_t^n,K).\] 
Also let $v_t: \mathbb R^D \times [0, T] \rightarrow \mathbb R^D$ be another Borel map with
\[ f_K(t)= \sup_{x \in K} |v_t| + \text{ Lip}(v_t,K).\]
As in Proposition \ref{prop:erz} we let e.g., $[0, \tau^n(x))$ be the maximal interval on which the ODE
\[ \dot X_t^n = v_t^n(X_t^n), ~~~X_0^n = x \]
admits a unique solution.  

\bigskip

We begin with an abstract closeness of trajectories result.  

\begin{lemma}\label{trajec_lemmanew} 
Let $(v^n_s: \mathbb R^D \times [0, t]  \rightarrow \mathbb R^D \mid n \in \mathbb N)$ be a sequence of velocity fields as described.  Let $t > 0$ and suppose that $v^n_s$ converges uniformly to some limiting velocity field $v_s$ on  $K \times [0, t]$ for any compact set $K \subset \mathbb R^D$.  Suppose further that 
\[
\sup_{n \in \mathbb N} \|f^n_K(t)\|_{L^\infty(0 ,t)} := f_K < \infty.
\]
Let $L > 0$ and define
\[\mathbb B_t(L) = \{ x \in \mathbb R^D \mid X(\cdot, x)[0, t] \subset D_L\}, \]
where $D_L \subset \mathbb R^D$ denotes the (phase space) ball of radius $L$ centered around the origin.
Then given any $\delta > 0$, there exists $n_0(t, L,\delta)$ such that if $n \geq n_0$, 
\[ \sup_{x \in \mathbb B_t(L)}\sup_{s \in [0,t]} |X_s^n(x) - X_s(x)| \leq \delta.\] 
\end{lemma}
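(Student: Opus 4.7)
The plan is to use a standard Gronwall argument, with a bootstrapping step to control the region in which the trajectories live. Fix $t > 0$ and $L > 0$, and let $x \in \mathbb B_t(L)$, so that the limiting trajectory $X_s(x)$ lies in $D_L$ for every $s \in [0,t]$. I want to compare $X_s^n(x)$ with $X_s(x)$ as long as $X_s^n(x)$ has not left, say, the slightly enlarged ball $D_{L+1}$. To this end, define
\[
\vartheta_n(x) = \sup\{ s \in [0,t] : X_\sigma^n(x) \in D_{L+1} \text{ for all } \sigma \in [0,s] \},
\]
and set $\e_n := \sup_{(y,s) \in \overline{D_{L+1}} \times [0,t]} |v_s^n(y) - v_s(y)|$, which tends to $0$ by the assumed uniform convergence on compact sets.

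For $s < \vartheta_n(x)$, both $X_s^n(x)$ and $X_s(x)$ lie in $\overline{D_{L+1}}$, so letting $r_n(s) = |X_s^n(x) - X_s(x)|$, I can split
\[
|v_s^n(X_s^n(x)) - v_s(X_s(x))| \leq |v_s^n(X_s^n(x)) - v_s^n(X_s(x))| + |v_s^n(X_s(x)) - v_s(X_s(x))| \leq f_{D_{L+1}} \, r_n(s) + \e_n,
\]
using the uniform Lipschitz bound $f_{D_{L+1}} := \sup_n \|f^n_{D_{L+1}}\|_{L^\infty(0,t)} < \infty$. Since $r_n(0) = 0$, Gronwall's inequality yields
\[
r_n(s) \leq \e_n \, t \, e^{f_{D_{L+1}} t} =: \e_n \cdot C(t,L) \qquad \text{for all } s < \vartheta_n(x).
\]
This bound is uniform in $x \in \mathbb B_t(L)$ since only the ball radius enters.

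The bootstrap step: choose $n_0$ so large that for $n \geq n_0$ we have $\e_n \cdot C(t,L) < \min(\tfrac12, \delta)$. I claim $\vartheta_n(x) = t$ for all $n \geq n_0$ and all $x \in \mathbb B_t(L)$. Suppose, for contradiction, that $\vartheta_n(x) < t$ for some such $x$. Passing to the left limit at $\vartheta_n(x)$, the Gronwall estimate gives $r_n(\vartheta_n(x)) \leq \tfrac12$, hence
\[
|X^n_{\vartheta_n(x)}(x)| \leq |X_{\vartheta_n(x)}(x)| + \tfrac12 \leq L + \tfrac12 < L + 1.
\]
By continuity of $X^n_\cdot(x)$, this means $X^n_\sigma(x) \in D_{L+1}$ on a strictly larger interval, contradicting the maximality of $\vartheta_n(x)$. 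Therefore $\vartheta_n(x) = t$, and the Gronwall bound then yields $\sup_{s \in [0,t]} r_n(s) \leq \delta$ uniformly in $x \in \mathbb B_t(L)$, completing the proof.

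The main (mild) obstacle is precisely this bootstrap: one cannot immediately apply Gronwall on all of $[0,t]$ because the approximating trajectory could a priori leave the region where the Lipschitz bound is available. The device of defining $\vartheta_n(x)$ and closing the argument by continuity resolves it cleanly, and is essentially the same mechanism already used in the openness proof for $\mathbb B_t(b)$ in Proposition \ref{prop:erz}.
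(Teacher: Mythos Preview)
Your proof is correct and follows essentially the same approach as the paper: both arguments define a maximal time up to which a Gronwall estimate is valid, use the Lipschitz bound on a slightly enlarged ball together with the uniform convergence of $v^n$ to $v$ to control $|X^n_s - X_s|$, and then close up by a continuity/bootstrap step showing this maximal time must equal $t$. The only cosmetic differences are that the paper wraps the argument in an outer contradiction (assuming the conclusion fails along some sequence $(n_k,x_k,s_k)$) and defines the maximal time $\vartheta^k$ via the condition $|X^{n_k}_s - X_s| \le 2\delta$, whereas you work directly and define $\vartheta_n(x)$ via $X^n_s \in D_{L+1}$; your formulation is slightly cleaner since the Lipschitz bound on $D_{L+1}$ is then unambiguously available throughout.
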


\begin{proof} 
Assume on the contrary that the conclusions do not hold. If so, there exists $\delta>0$ and an increasing sequence $(n_k) \subset \mathbb N$, sequences $(x_k) \subset \mathbb B_t(L)$ and $(s_k) \subset [0,t]$ such that 
\begin{equation}\label{ginnew}  |X^{n_k}_{s_k}(x_k)-X_{s_k}(x_k)|>4\delta. 
\end{equation} 
For each $k$ set 
\[ \vartheta^k = \sup_{\tau \geq 0} \left\{ \tau \; | \; |X_s^{n_k}(x_k) - X_s(x_k)| \leq 2\delta~~~\mbox{for all }s \in [0,\tau] \right\}.\]  
Next choose $\sigma>0$ such that 
\begin{equation}\label{choicenew}  
  \sigma \exp\Bigl(f_{D_{L+\delta}}t \Bigr)< f_{D_{L+\delta}} \delta.
\end{equation}
As $(v^n_s)$ converges uniformly to $v_s$ on $D_L \times [0,t]$ we may choose a positive integer $k_0$ so that 
$$
 \sup_{D_L \times [0,t]}   |v^{n_k}_s-v_s| < \sigma
$$ 
for all $k \geq k_0.$

We claim that $\vartheta^k \geq t$ for $k \geq k_0.$ Assume towards a contradiction that $\vartheta^k < t.$ For almost every $s \in (0,\vartheta^k)$ we have 
\[
\begin{split}
\frac{d}{ds} |X_s^{n_k}(x_k) - X_s(x_k)| &\leq |v_s^{n_k}(X_s^{n_k}(x_k)) - v_s(X_s(x_k))|\\
&\leq |v_s^{n_k}(X_s^{n_k}(x_k)) - v_s^{n_k}(X_s(x_k))| + |v_s^{n_k}(X_s(x_k)) - v_s(X_s(x_k))|\\
&\leq f_{D_{L + \delta}}|X_s^{n_k}(x_k) - X_s(x_k)| + \sigma.
\end{split} 
\]
A Gronwall type integral inequality and Equation (\ref{choicenew}) yield   
\[ 
|X_s^{n_k}(x_k) - X_s(x_k)| \leq \frac{\sigma}{f_{D_{L + \delta}}}\cdot\exp(f_{D_{L+\delta}} s)<\delta. 
\]
This proves that $X^{n_k}(x_k, \cdot)[0, \vartheta^k)$ is contained in $D_{L+\delta}.$ Hence the solution $X^{n_k}(x_k, \cdot)$ of the ODE  can be extended to an interval of positive length $[\vartheta^k, \vartheta^k+a_k]$ such that by continuity 
\[
|X_s^{n_k}(x_k) - X_s(x_k)| \leq 2\delta 
\]
on $[0, \vartheta^k+a_k]$. This contradicts the maximality of $\vartheta^k$ and proves the claim. We eventually use the fact that $\vartheta^k \geq t$ for $k \geq k_0$ to contradict Equation (\ref{ginnew}).
\end{proof}

The reasoning behind the above set of ideas now allows us to deduce a representation formula for $\mu_t^\e$ from the representation formula for $\mu_t^{\e, n}$, i.e., the measure at time $t$ is expressible as the push--forward of $\mu_0$, augmented with the depreciation in mass provided by $R_t^\e$.  Let us first recall that 
\[ \mathbb S_t^\e = \{ x \in \mathbb R^D: \tau^\e(x) > t\}, \]
where $\tau^\e(x)$ is such that the ODE $\dot X_t^\e(x) = v_t^\e(X_t^\e), X_0^\e = \mbox{Id}$ has a unique solution on $[0, \tau(x))$ (see Proposition \ref{prop:erz}).  

\begin{lemma}\label{rep_form}
Let $\e > 0, t > 0$ be fixed and let $(\mu_s^\e)_{s \in [0, t]}$ be as constructed in Theorem \ref{exist_def_ODE}.  Then we have the representation formula: If $\varphi \in C_c^\infty(\mathbb R^D)$,
\[ 
\int_{\mathbb R^D} \varphi~d\mu_t^\e = \int_{\mathbb S_t^\e} (\varphi \cdot R_t^\e) \circ X_t^\e~d\mu_0,
\]
where we recall that $\mathbb S_t^\e$ is an open set, by Proposition \ref{prop:erz}, (ii).  
\end{lemma}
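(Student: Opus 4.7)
The plan is to carry over the representation formula from the time-discretized measures $\mu_t^{\e,n}$ constructed in the proof of Theorem~\ref{exist_def_ODE}. Those measures already satisfy
\[
\int_{\mathbb R^D} \varphi\, d\mu_t^{\e,n} = \int_{\mathbb S_t^{\e,n}} (\varphi\cdot R_t^{\e,n})\circ X_t^{\e,n}\, d\mu_0,
\]
because the flow is driven by a piecewise constant velocity field and Proposition~\ref{preserv_exp}(ii) applies on each sub-interval. Passing to the subsequential limit $n\to\infty$ along which $\mu_t^{\e,n}\rightharpoonup\mu_t^\e$ (at the fixed time $t$, as extracted in the proof of Theorem~\ref{exist_def_ODE}), the LHS tends to $\int\varphi\, d\mu_t^\e$. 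For the RHS, I would extend the integrand by zero outside $\mathbb S_t^{\e,n}$, making it defined on all of $\mathbb R^D$ and bounded in absolute value by $\|\varphi\|_\infty$; since $\mu_0$ is finite this dominating bound is integrable. The matter then reduces to $\mu_0$-a.e.\ pointwise convergence of $1_{\mathbb S_t^{\e,n}}\cdot(\varphi R_t^{\e,n})\circ X_t^{\e,n}$ to $1_{\mathbb S_t^\e}\cdot(\varphi R_t^\e)\circ X_t^\e$, which I verify in two cases.

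For $x\in\mathbb S_t^\e$, the limit trajectory $\{X_s^\e(x):s\in[0,t]\}$ is bounded in some $D_L$, placing $x$ in the set $\mathbb B_t(L)$ of Lemma~\ref{trajec_lemmanew}. For fixed $\e>0$, the hypotheses of that lemma hold with $v_t^{\e,n}\to v_t^\e$ uniformly on $K\times[0,T]$ for each compact $K$: uniform spatial convergence comes from Lemma~\ref{lemma_ham} combined with the exponential-moment bound \eqref{est_a_priori}, while uniformity in time (and control of the discretization offset $[t/h]h-t$) follows from the derivative estimates of Lemma~\ref{le:bound-on-times1} and Remark~\ref{re:bounded_moment}(iii). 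Hence $X_s^{\e,n}(x)\to X_s^\e(x)$ uniformly on $[0,t]$; for $n$ large this forces $x\in\mathbb S_t^{\e,n}$, and uniform convergence of $v_s^{\e,n}\circ X_s^{\e,n}$ to $v_s^\e\circ X_s^\e$ yields $R_t^{\e,n}(x)\to R_t^\e(x)$, so together with continuity of $\varphi$ the integrands converge.

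The subtle case is $x\notin\mathbb S_t^\e$ (i.e.\ $\tau^\e(x)\le t$): the target value is zero and the limit flow is no longer defined on all of $[0,t]$, so I must exploit the dissipation factor $R_t^{\e,n}$ together with uniform flow convergence only on subintervals $[0,s_0]$ with $s_0<\tau^\e(x)$. This is the main obstacle. Fix $Q$ with $\mathrm{supp}(\varphi)\subset D_Q$, let $\delta>0$, and choose $M$ so large that $\|\varphi\|_\infty e^{-\e(M-Q)}<\delta$. By Proposition~\ref{prop:erz}(i), $|X_s^\e(x)|\to\infty$ as $s\to\tau^\e(x)^-$, so one can pick $s_0<\tau^\e(x)$ with $|X_{s_0}^\e(x)|>M+1$; Lemma~\ref{trajec_lemmanew} applied on $[0,s_0]$ then gives $|X_{s_0}^{\e,n}(x)|>M$ for all large $n$. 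If $X_t^{\e,n}(x)\notin D_Q$ the integrand vanishes via $\varphi$; otherwise the arclength-versus-chord estimate
\[
\int_0^t |\dot X_s^{\e,n}(x)|\, ds \ \ge\ \bigl|X_t^{\e,n}(x)-X_{s_0}^{\e,n}(x)\bigr|\ \ge\ M-Q
\]
forces $R_t^{\e,n}(x)\le e^{-\e(M-Q)}$, so the integrand is at most $\delta$ for all large $n$. Dominated convergence then concludes the proof.
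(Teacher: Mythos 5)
Your proof is correct and follows essentially the same route as the paper: reduce to convergence of the pulled-back integrals via the finite-$n$ representation formula, get uniform convergence of the discretized velocity fields from the estimates of Lemma \ref{le:bound-on-times1}, apply Lemma \ref{trajec_lemmanew}, split into $x\in\mathbb S_t^\e$ and $x\notin\mathbb S_t^\e$, and conclude by dominated convergence. Your handling of the second case (chord from $X_{s_0}^{\e,n}$ to $X_t^{\e,n}$, with the compact support of $\varphi$ covering the other alternative) is a harmless variant of the paper's argument, which measures the displacement from the initial point $x$ and shows $R_t^{\e,n}\circ X_t^{\e,n}(x)\to 0$ outright; note only that Proposition \ref{prop:erz}(i) gives unboundedness of $s\mapsto|X_s^\e(x)|$ on $[0,\tau^\e(x))$ rather than divergence, which is all you actually need to choose $s_0$.
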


\begin{proof}
The result essentially follows from Lemma \ref{trajec_lemmanew} and the fact that since $\e > 0$, there is no ``discontinuity at infinity''.  We first note that by distributional convergence and by the fact that for finite $n$ we do have the representation formula, we have 
\[ \int_{\mathbb R^D} (\varphi \cdot R_t^{\e, n}) \circ X_t^{\e, n}~d\mu_0 = \int_{\mathbb R^D} \varphi~d\mu_t^{\e, n} \longrightarrow \int_{\mathbb R^D} \varphi~d\mu_t^\e, \]
and therefore it is sufficient to establish that 
\begin{equation}\label{to_show} \int_{\mathbb R^D} (\varphi \cdot R_t^{\e, n}) \circ X_t^{\e, n}~d\mu_0 \longrightarrow \int_{\mathbb S_t^{\e}} (\varphi \cdot R_t^{\e}) \circ X_t^{\e}~d\mu_0.\end{equation} 

First we claim that $v^{\e, n}_s$ uniformly converges to $v^\e_s$ on $K \times [0, t]$ for any compact set $K \subset \mathbb R^D$.  Indeed,  it is again sufficient to address the interaction term and the argument is essentially the same as the proof of Corollary \ref{lim_uc}; we remind the reader that the necessary estimates (namely Equations \eqref{vb1} and \eqref{vb2}) to ensure pre--compactness from Lemma \ref{le:bound-on-times1} are uniform in $n$.  We therefore may assume the conclusion of Lemma \ref{trajec_lemmanew} for the trajectories $X_t^{\e, n}$ and $X_t^\e$.

To establish \eqref{to_show} we will divide into two cases:  

\textbf{1.} Case: $x \in \mathbb S_t^\e$ (or $t < \tau^\e(x)$). In this case, for $L$ sufficiently large, $x \in \mathbb B_t^\e(L)$ and therefore by Lemma \ref{trajec_lemmanew} we have the pointwise limit
\[ \lim_{n \rightarrow \infty} (R_t^{\e, n} \cdot \varphi) \circ X_t^{\e, n}(x) = \chi_{_{\mathbb S_t^\e}} (R_t \cdot \varphi) \circ X_t^\e(x).\] 

\textbf{2.} Case: $ x \notin \mathbb S_t^\e$ (or $t \geq \tau^\e(x)$).   Here we claim that pointwise the corresponding portion of the integrand in the left hand side of \eqref{to_show} converges to 0: Choose $r>0$ arbitrary. Then choose $A$ large enough so that $\exp(-\e A)<r.$ Next, choose $\bar t<\tau^\e(x)$ such that $|X_{\bar t}^\e(x)-x|>2A$.  Since $\bar t < \tau^\e(x)$, by Lemma \ref{trajec_lemmanew} there exists $n_0$ such that 
\[|X_{\bar t}^{\e, n}(x)- X_{\bar t}^\e(x)| <A\]
for all $n \geq n_0.$ Hence, 
\[|X_{\bar t}^{\e, n}(x)-x| >A\] 
and so, 
\[  R_t^{\e, n}(X^{\e, n}_t(x))  \leq    \exp\Bigl(-\e \int_0^{\bar t}|\dot X^{\e, n}_s|~ds  \Bigr) 
\leq    \exp\Bigl(-\e \bigl|X^{\e, n}_{\bar t}(x)-x \bigr|~ds  \Bigr) \leq  \exp(-\e A)<   r
 \]
for these $n$. This proves that 
 \[\limsup_{n \rightarrow \infty}  R_t^{\e, n} \circ X^{\e, n}_t(x) \leq  r.
 \]
Taking $r$ to zero, we may conclude the case $x \notin \mathbb S_t^\e$. 

Integrating and invoking the dominated convergence theorem now yields Equation \eqref{to_show}.

\end{proof}

%
 
We can now extract the limiting measures by taking $\e$ to zero (along a sequence) directly:

\begin{thm}\label{ett}
For fixed $0 < T < \infty$ and $\alpha_0 > 0$ let $\mu_0 \in M_{\alpha_0}$ denote some initial Borel measure on $\mathbb R^D$ of finite total mass and let $\mathscr H$ denote the Hamiltonian in Definition \ref{defn_ham}.  Assume for simplicity that the total mass of $\mu_0$ is $1.$ Let $(\mu_t^\e)_{t \in [0, T]}$ be as constructed in Theorem \ref{exist_def_ODE}.  Then there exists a distributional limit of $(\mu_t^{\e})_{t \in [0, T]}$ along some subsequence $(\e_k)$, denoted $( \mu_t)_{t \in [0, T]}$ starting at $\mu_0$ such that
\begin{itemize}
\item[(i)] $t \rightarrow \mu_t \in \mathscr M$ is distributionally continuous and $M_0(\mu_t) \leq 1$ for $t \in [0,T]$.

\item[(ii)] $(\mu_t)_{t \in [0, T]}$ satisfies the  Hamiltonian ODE 
\begin{equation} \label{def_eqnew} 
\partial_t \mu_t + \nabla \cdot (J \nabla_{W} \mathscr H(\mu_t)\mu_t) = 0.
\end{equation}
\end{itemize}
\end{thm}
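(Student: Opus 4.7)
The plan is to reproduce the logic of Theorem \ref{exist_ham_ODE}, but with the regularization parameter $\e$ taking over the role previously played by the time-discretization parameter $n$. The key observation is that every estimate assembled in Section \ref{sec_defn-set}, in particular Lemmas \ref{lem_tightness} and \ref{le:bound-on-times1}, was proved with constants uniform in both $\e$ and $n$ separately; after a preliminary $n \to \infty$ passage these bounds therefore become available at the $\e$-level, which is what drives the whole argument.

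First I would extract a subsequential limit. Since $M_0(\mu_t^\e) \leq 1$ uniformly in $t$ and $\e$ by Theorem \ref{exist_def_ODE}(ii), the Banach--Alaoglu theorem together with a diagonal argument over $\mathscr D := (0, T) \cap \mathbb Q$ produces a sequence $\e_k \to 0$ and a Radon measure $\mu_t$ for each $t \in \mathscr D$ with $\mu_t^{\e_k} \rightharpoonup \mu_t$ distributionally. By Lemma \ref{le:bound-on-times1} the velocity fields $v_t^\e$ are bounded on compact sets by constants independent of $\e$, so Remark \ref{rem_finite} yields an $\e$-uniform Lipschitz bound on $t \mapsto \int \varphi\, d\mu_t^\e$ for every $\varphi \in C_c^\infty(\mathbb R^D)$. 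Replicating the two claims in the proof of Proposition \ref{narrow_conv} then extends $\mu_t$ to all of $[0, T]$ and gives distributional continuity of $t \mapsto \mu_t$.

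Next I would upgrade this to uniform convergence of the interaction field on compact sets. For each fixed $\e$, Lemma \ref{lem_tightness} bounds the tightness-in-$p$ quantity for $\mu_t^{\e, n}$ uniformly in $n$ and $t$; since the relevant integrand $\theta_r(p)|\nabla W(\bar q - q)|$ is continuous and bounded and $\mu_t^{\e, n} \rightharpoonup \mu_t^\e$ narrowly (using the uniform exponential moment for fixed $\e$), the same bound transfers to $\mu_t^\e$ with constants still independent of $\e$. This is exactly the hypothesis of Lemma \ref{lemma_ham}, which yields pointwise-in-$t$ uniform convergence on compact sets of $F_t^{\e_k} := \nabla W * \mu_t^{\e_k}$ to $F_t := \nabla W * \mu_t$; the uniform spatial and temporal derivative estimates of Lemma \ref{le:bound-on-times1} and the Arzel\`a--Ascoli argument of Corollary \ref{lim_uc} then promote this to uniform convergence on compact subsets of $\mathbb R^d \times [0, T]$. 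Testing Equation~(\ref{def_eq}) against $\varphi \in C_c^\infty((0, T) \times \mathbb R^D)$, the right-hand side
\[ -\e_k \int_0^T \int \varphi\, |v_t^{\e_k}|\, d\mu_t^{\e_k}\, dt \]
is $O(\e_k)$ because $|v_t^{\e_k}|$ is uniformly bounded on $\operatorname{supp}(\varphi)$ and $|\mu_t^{\e_k}| \leq 1$, while the flux term on the left converges to its natural $\e=0$ limit by distributional convergence combined with uniform convergence of $F_t^{\e_k}$ on compacts. The mass bound $M_0(\mu_t) \leq 1$ follows from $M_0(\mu_t^{\e_k}) \leq 1$ and Equation~(\ref{set_conv}) applied to a large open ball whose radius is sent to infinity.

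The main obstacle is the transfer of the tightness-in-$p$ estimate of Lemma \ref{lem_tightness} from the time-discretized $\mu_t^{\e, n}$ (for which pullback to $\mu_0$ along the smooth flow $X_t^{\e, n}$ is automatic) up to $\mu_t^\e$ itself. This is where the representation formula of Lemma \ref{rep_form} is indispensable: it expresses $\mu_t^\e$ as a damped pushforward of $\mu_0|_{\mathbb S_t^\e}$ along the limiting flow $X_t^\e$, so that, together with the no-return property preserved in the $\e \to 0$ limit (Remark \ref{NY}), the Lagrangian partition of phase space into $\mathcal S$, $\mathcal G$, $\mathcal O$ used in the proof of Lemma \ref{lem_tightness} can be rerun verbatim at the $\e$-level. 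Once this transfer is secured, every remaining step is a routine reassembly of tools already developed in the preceding subsections.
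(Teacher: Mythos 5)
Your proposal is correct and follows essentially the same route as the paper: the key step is exactly the one you identify, namely using the representation formula of Lemma \ref{rep_form} together with the $\e$- and $n$-uniform bounds of Lemmas \ref{lem_tightness} and \ref{le:bound-on-times1} to adapt Corollary \ref{lim_uc} to the measures $(\mu_t^\e)$, after which the limiting argument of Proposition \ref{narrow_conv}/Theorem \ref{exist_ham_ODE} is repeated \emph{mutatis mutandis}. Your auxiliary remark that the tightness bound can alternatively be transferred from $\mu_t^{\e,n}$ to $\mu_t^\e$ by narrow convergence is a harmless variant of the same mechanism.
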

\begin{proof}
The representation formula from Lemma \ref{rep_form} allows us to adapt the proof of Corollary \ref{lim_uc} for the measures $(\mu_t^\e)_{t \in [0, T]}$, yielding the requisite convergence of velocity fields.  We remind the reader that once the representation formula has been acquired, the key ingredients for the proof of Corollary \ref{lim_uc} are found in Lemma \ref{le:bound-on-times1}.   The conclusion of this Lemma provided derivative bounds on the velocity fields -- Equations \eqref{eq:first-time-est1} and \eqref{eq:first-time-est2} -- and these bounds are uniform in ($n$ and) $\e$.  With these preparatory results in hand, the proof of Theorem \ref{exist_ham_ODE} can be repeated \emph{mutatis mutantis}. 
\end{proof} 

We conclude this subsection with a result which turns out to be of no explicit use in the present work but may be of some independent interest: As an immediate corollary to the preceding ideas we can also deduce a closeness of trajectories result as $\e$ tends to zero.

\begin{cor}\label{trajec_lemma}
Let $L > 0, t > 0$ and consider $\mu_t^\e$ and trajectories given by the dynamics in Theorem \ref{exist_def_ODE}.  For $\e \geq 0$ we define
\[\mathbb B_t^\e(L) = \{ x \in \mathbb R^{D} \mid X^\e(\cdot, x)[0, t] \subset D_L\}, \]
where $D_L$ denotes the phase space ball of radius $L$ centered around the origin.
Suppose $\mu_t^{\e_k} \rightharpoonup \mu_t$.  Then given any $\alpha > 0$, there exists $\e_0(t, L)$ such that if $\e \in (\e_{k})$ and $\e < \e_0$, then 
\[ \sup_{x \in \mathbb B_t^{0}(L)} \sup_{s \in [0,t]} |X_s^\e(x) - X_s(x)| < \delta.\] 
In particular, 
\[\mathbb B_t^\e(L) \subset \mathbb B_t^0(L+ \alpha). \]
\end{cor}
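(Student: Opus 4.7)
The plan is to repeat the Gronwall/continuation argument of Lemma \ref{trajec_lemmanew} with the parameter $\e \to 0$ playing the role of $n \to \infty$, using the machinery assembled earlier in this section to supply the two ingredients that argument requires: uniform convergence of the velocity fields on compact subsets of phase space and a uniform-in-$\e$ local Lipschitz/sup bound for them. (I will treat the $\delta$ in the displayed inequality as a typo for $\alpha$ since the ``in particular'' clause makes the intended statement unambiguous.)

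First, I would verify that $v^{\e_k}_s \to v_s$ uniformly on $D_R \times [0,t]$ for every $R>0$. Since $v^{\e_k}_s$ and $v_s$ differ only through the interaction term $\nabla W \ast \mu^{\e_k}_s$ versus $\nabla W \ast \mu_s$, this reduces to showing uniform convergence of the mean-field forces $F^{\e_k}_s$. This is exactly the strategy used in the proof of Theorem \ref{ett}: Lemma \ref{le:bound-on-times1} produces a priori bounds on $\nabla F^{\e_k}_s$ and $\partial_t F^{\e_k}_s$ that are uniform in $\e$, so by Arzelà--Ascoli the family is precompact in $C(D_R \times [0,t])$, and any cluster point must coincide with $F_s$ by Lemma \ref{lemma_ham}, whose tightness-in-$p$ hypothesis is supplied uniformly in $\e$ by Lemma \ref{lem_tightness} (together with Remark \ref{e_tight}). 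This pins down the cluster point and yields uniform convergence on compact sets along the full sequence $(\e_k)$. Lemma \ref{le:bound-on-times1} (and the explicit form of $v^\e$) simultaneously furnishes the uniform bound $f_K := \sup_{\e} \|f^\e_K\|_{L^\infty(0,t)} < \infty$ on every compact $K \subset \mathbb R^D$ that the Gronwall step needs.

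Second, with these two inputs in hand I would run the argument of Lemma \ref{trajec_lemmanew} \emph{verbatim}, substituting $\e$ for $n$. Given $\alpha>0$, choose $\sigma>0$ with $\sigma \exp(f_{D_{L+\alpha}}\, t) < f_{D_{L+\alpha}}\, \alpha$, then pick $\e_0$ small enough so that $\sup_{D_{L+\alpha}\times [0,t]} |v^\e_s - v_s| < \sigma$ for all $\e \in (\e_k)$ with $\e<\e_0$. For $x \in \mathbb B^0_t(L)$ the limit trajectory $X_s(x)$ lies in $D_L$ on $[0,t]$, and defining $\vartheta := \sup\{\tau \in [0,t] : |X^\e_s(x) - X_s(x)| \leq \alpha \text{ on } [0,\tau]\}$ both trajectories remain in $D_{L+\alpha}$ up to $\vartheta$; the standard differential inequality plus Gronwall then gives $|X^\e_s(x) - X_s(x)| < \alpha$ strictly on $[0,\vartheta]$, whence continuity and maximality force $\vartheta \geq t$. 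The same continuation argument, run with the roles of $X^\e$ and $X$ swapped starting from $x \in \mathbb B^\e_t(L)$, yields $|X_s(x) - X^\e_s(x)| < \alpha$ on $[0,t]$ and therefore $X_s(x) \in D_{L+\alpha}$, i.e., $x \in \mathbb B^0_t(L+\alpha)$; this is the ``in particular'' clause.

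The main obstacle is not the Gronwall step itself, which is entirely routine, but the upstream task of obtaining uniform-in-$\e$ convergence of the mean-field force $F^\e_s$ on compact sets while $\mu^\e_s$ may be shedding mass to infinity. That difficulty has already been resolved upstream via the representation formula of Lemma \ref{rep_form} combined with the tightness of Lemma \ref{lem_tightness} and the derivative bounds of Lemma \ref{le:bound-on-times1}, and once those are invoked the corollary follows as a soft continuation consequence.
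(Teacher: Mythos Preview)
Your proposal is correct and follows essentially the same route as the paper: reduce to Lemma \ref{trajec_lemmanew} by verifying its hypotheses via the uniform-in-$\e$ derivative bounds of Lemma \ref{le:bound-on-times1} (transferred to $\mu_t^\e$ through the representation formula of Lemma \ref{rep_form}), with the limit of the forces pinned down by Lemma \ref{lemma_ham} and the tightness of Lemma \ref{lem_tightness} so that no further subsequence is needed. You are simply more explicit than the paper about the Gronwall/continuation step and about the role-swap needed for the ``in particular'' inclusion, and you correctly flag the $\delta/\alpha$ discrepancy in the statement.
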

\begin{proof}
It is sufficient to verify the hypothesis of Lemma \ref{trajec_lemmanew}, which is immediate from Lemma \ref{le:bound-on-times1} (here we reiterate that the relevant estimates in Equations \eqref{vb1} and \eqref{vb2} are uniform in $\e$; c.f.,~proof of Theorem \ref{ett}).  We note that a further subsequence in $\e$ is not required as the limit is uniquely specified by Lemma \ref{lemma_ham} (as was the case in the proof of Corollary \ref{lim_uc}). 
\end{proof}

\subsection{Convergence of Mass}

Here, we are in the setting of Theorem \ref{ett}, and we wish to establish statements concerning convergence of (total) mass.  In particular, for $\e_k \rightarrow 0$ and $\mu_t^{\e_k} \rightharpoonup \mu_t$ we will show that a limit exists for the finite $\e$--masses and in particular it agrees, a.e., with the mass of the limiting measure.

Since $\mu^\e_t$ has been constructed in Theorem \ref{exist_def_ODE} as the limit for the \emph{narrow} convergence of a sequence $(\mu^{\e,n}_t)$ such that $t \rightarrow \mu_t^{\e, n}(\mathbb R^D)$ is monotone nonincreasing, $t \rightarrow \mu_t^{\e}(\mathbb R^D)$ is monotone nonincreasing (which also follows from the fact that $\mu_t^\e$ satisfies Equation \eqref{partial_cont}). Unfortunately, in Theorem \ref{ett}, $\mu_t$ is  obtained as a limit of a subsequence of $ (\mu_t^{\e})$ only for the \emph{vague} topology and so, the above simple argument does not apply.  What is however obvious is that as $\mu_0^{\e}$ is independent of $\e$, $\mu_t(\mathbb R^D)\leq \mu_0(\mathbb R^D)$. 

In light of the previous comments, we plan to first demonstrate that, at least under the dynamical condition, in the limiting measure the mass can only decrease in time. (That is, there cannot be particles returning from infinity.)  

First, for expository ease, we will introduce \emph{compact} regions of no return in phase space.  

\begin{prop}\label{prop_psb}
Let $t > \bar t \geq 0$ and consider a trajectory $X_t = (p_t, q_t)$ satisfying the dynamics as in Theorem \ref{exist_def_ODE} such that $X_{\bar t}$ is in some $B_{L'} \times B_{L^\star}$ for some $L' > 0$ and where  $L^\star$ is as in Hypothesis \ref{dyn_hyp} \emph{and} remain in $\mathbb R^d \times B_{L^\star} = \Omega_{L^\star}$ up to time $t$.  Then there is a $L^\star$--dependent constant $a_\star$ such that uniformly in $\e$, for all $\tau \in [\bar t, t]$, 
\[ |p_\tau| \leq a_\star (t-\bar t) + |p_{\bar t}|.\]
\end{prop}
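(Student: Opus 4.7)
The plan is to integrate the momentum equation directly over $[\bar t, \tau]$ and exploit the fact that confinement of $q_\tau$ in $B_{L^\star}$ gives a uniform pointwise bound on the force. Specifically, from the definition of the Hamiltonian dynamics (Equations \eqref{grad_ham} and \eqref{dhamode}) the characteristics satisfy
\[
\dot q_\tau = p_\tau, \qquad \dot p_\tau = -\nabla(\Phi + W\ast \mu^\e_\tau)(q_\tau).
\]
So the whole business reduces to controlling $|\nabla(\Phi + W\ast\mu^\e_\tau)(q_\tau)|$ on the time interval in question.

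First I would bound the two pieces separately. Since $\Phi \in C^2(\mathbb R^d)$ and $q_\tau \in B_{L^\star}$ throughout $[\bar t, t]$, the continuous function $|\nabla \Phi|$ attains a maximum on the compact set $B_{L^\star}$; call it $M_\Phi := \sup_{|q|\le L^\star}|\nabla\Phi(q)|$. For the interaction term, Theorem \ref{exist_def_ODE} (ii) yields $M_0(\mu^\e_\tau)\le 1$ for every $\tau$ and every $\e$, and by hypothesis $|\nabla W|\le B$, so
\[
|\nabla(W\ast\mu^\e_\tau)(q_\tau)| \;\le\; \|\nabla W\|_\infty\,\mu^\e_\tau(\mathbb R^D) \;\le\; B.
\]
Setting $a_\star := M_\Phi + B$ (which depends only on $L^\star$, $\Phi$ and $W$, not on $\e$ or on the trajectory) gives $|\dot p_\tau|\le a_\star$ for all $\tau\in[\bar t, t]$.

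Then I would integrate: for any $\tau\in[\bar t,t]$,
\[
|p_\tau| \;\le\; |p_{\bar t}| + \int_{\bar t}^{\tau}|\dot p_s|\,ds \;\le\; |p_{\bar t}| + a_\star(\tau-\bar t) \;\le\; |p_{\bar t}| + a_\star(t-\bar t),
\]
which is precisely the claimed estimate. Uniformity in $\e$ is automatic since $a_\star$ was built only from $L^\star$ and the fixed data $(\Phi, W)$ and the uniform mass bound $M_0(\mu^\e_\tau)\le 1$.

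There is no real obstacle here; the only thing to be careful about is that the argument only needs the one-sided confinement assumption $|q_\tau|\le L^\star$ on $[\bar t, t]$ and makes no use of the initial momentum bound $|p_{\bar t}|\le L'$ (which serves only to localise the trajectory in phase space for later application). In particular, the constant $a_\star$ is genuinely independent of $L'$ and $\e$, which is what will matter when this bound is invoked to feed tightness arguments in subsequent mass-conservation estimates.
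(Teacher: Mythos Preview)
Your proof is correct and essentially identical to the paper's: both bound the force $|\dot p_\tau| = |\nabla(\Phi + W\ast\mu^\e_\tau)(q_\tau)|$ by $a_\star := \sup_{|q|\le L^\star}|\nabla\Phi| + \|\nabla W\|_\infty$ using the confinement $q_\tau\in B_{L^\star}$ and the uniform mass bound $M_0(\mu^\e_\tau)\le 1$, then integrate. Your closing observation that $L'$ plays no role in the bound is accurate and matches the paper's subsequent use of the result.
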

\begin{proof}
Let us write $\tilde F_t^\e(q_t) = \nabla \Phi(q_t) + \nabla W * \mu_t^\e(q_t)$ and observe that since $|\nabla W|$ and $|\nabla \Phi|$ are both bounded in $\Omega_{L^\star}$, there is some $L^\star$--dependent constant $a_\star$ such that $|F_t(q_t)| \leq a_\star$.  Taking into account the fact that, for all $\e$, the total mass of $ \mu_t^\e$ can, without loss of generality, be assumed to be less than or equal to $1$, we may explicitly  choose 
\[ a_\star =  \sup_{q \in B_{L^\star}}|\nabla \Phi|+  |\nabla W|. \]
We have by the canonical equations that (uniformly in $\e$)
\[\frac{d}{ds}|p_s| \leq |\dot p_s| = |\tilde F_s(q_s)| \leq a_\star,\]
where we have used that $|q_s| \leq |q_\star| = L^\star$ for all $0 \leq s \leq t.$ The result follows by integration.
\end{proof}

It is now clear that we may define phase space regions of no return:

\begin{defn}\label{defn_psb}
Let $T > 0$ and let $(\mu_t^\e)_{t \in [0, T]}$ be given as in Theorem \ref{exist_def_ODE} and let $L^\star \rightarrow \infty$ be the sequence as in Hypothesis \ref{dyn_hyp}.  Let $\eta > 0$ be an arbitrary (small) number.  Then we define
\[ \bar \Omega_{L^\star}(t) = B_{L^\star + (a_\star + \eta) t} \times B_{L^\star}, \]
where $a_\star$ is as in Proposition \ref{prop_psb}.
\end{defn}

We then have 
\begin{lemma}\label{lemma_psb}
Let $\bar t \in  [0, T]$ be fixed.  Under Hypothesis \ref{dyn_hyp}, for any trajectory $(p_{\bar t}, q_{\bar t}) \in \bar \Omega_{L^\star}(\bar t)$ satisfying the dynamics as given in Theorem \ref{exist_def_ODE}, it is the case that either 
\[ (p_t, q_t) \in \bar \Omega_{L^\star}(t), ~~~\mbox{for all $t \geq \bar t$}, \]
or 
\[ |q_t| \rightarrow \infty\]
-- either at some finite time or as $t\to\infty$.  In particular, there is some time
$t_{*}$ after which the radial speed is uniformly bounded away from zero:
i.e., there exists some $\alpha$ and some $t_* > \bar t$ such that 
\[ \frac{d|q_t|}{dt} \geq \alpha > 0, ~~~\mbox{for all } t \geq t_*.\]
Moreover, $(p_t, q_t)$ exits $\bar \Omega_{L^\star}(t)$ on the $|q| = |q^\star|$ boundary.

\end{lemma}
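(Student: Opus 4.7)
The plan is to combine the momentum growth bound of Proposition \ref{prop_psb} with the radial monotonicity machinery of Lemma \ref{rad_outward} (and the more general Remark \ref{re:rad_outward}). The relevant single-particle Hamiltonian is that of Section \ref{d} with the time-dependent perturbation $\Psi(t,q) = (W \ast \mu_t^\e)(q)$, and the crucial uniform bound
\[
|\nabla \Psi(t,q)| \leq \|\nabla W\|_\infty \cdot \mu_t^\e(\mathbb R^D) \leq B
\]
holds uniformly in $\e$ by Theorem \ref{exist_def_ODE}(ii); Hypothesis \ref{dyn_hyp} then supplies a $u$ (hence a $\Upsilon$) dominating the combined radial force, so the hypotheses of Lemma \ref{rad_outward} are in force along every trajectory, uniformly in $\e$.

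I will dichotomize on whether the position marginal ever escapes $B_{L^\star}$. If $|q_t| \leq L^\star$ for all $t \geq \bar t$, Proposition \ref{prop_psb} (with initial time $\bar t$) gives, for every such $t$,
\[
|p_t| \leq a_\star(t-\bar t) + |p_{\bar t}| \leq a_\star(t-\bar t) + L^\star + (a_\star + \eta)\bar t = L^\star + a_\star t + \eta \bar t \leq L^\star + (a_\star + \eta) t,
\]
the middle step using $(p_{\bar t}, q_{\bar t}) \in \bar\Omega_{L^\star}(\bar t)$. Hence $(p_t, q_t) \in \bar\Omega_{L^\star}(t)$ for every $t \geq \bar t$, which is the first branch of the dichotomy.

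Otherwise, set $t_\# := \inf\{ t \geq \bar t : |q_t| > L^\star \}$. By continuity $|q_{t_\#}| = L^\star$, and since the previous case's argument is valid on $[\bar t, t_\#]$, the momentum bound there yields $|p_{t_\#}| \leq L^\star + a_\star t_\# + \eta \bar t < L^\star + (a_\star + \eta) t_\#$ whenever $t_\# > \bar t$, so the trajectory strictly avoids the $|p|$-face of $\partial \bar\Omega_{L^\star}(t_\#)$ and must exit through $\{|q| = L^\star\}$, establishing the ``moreover'' clause. By the definition of $t_\#$, $d|q|/dt(t_\#) \geq 0$; the second-derivative calculation of Equation (\ref{e:second-der-of-|q|3}) in Lemma \ref{rad_outward}, combined with $u'(L^\star) \leq 0$ coming from Hypothesis \ref{dyn_hyp}, rules out equality, so $d|q|/dt(t_\#) > 0$. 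Remark \ref{re:rad_outward} (applied at $t_* = t_\#$) then yields $d|q|/dt(t) \geq d|q|/dt(t_*) =: \alpha > 0$ for all $t \geq t_*$, whence $|q_t| \to \infty$ in finite or infinite time (see Remark \ref{DSN}).

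The main obstacle, insofar as there is one, is verifying that the hypotheses of Lemma \ref{rad_outward} hold for the $\e$-dependent effective potential \emph{uniformly} in $\e$, and that the buffer $\eta > 0$ in Definition \ref{defn_psb} provides the strict momentum-face inequality used above to pin the exit to the position face. Both are immediate from the total-mass bound of Theorem \ref{exist_def_ODE}(ii) together with the explicit product shape of $\bar\Omega_{L^\star}(t)$. The degenerate case $t_\# = \bar t$ (starting precisely at $|q_{\bar t}| = L^\star$ with outward motion) requires no new idea: Remark \ref{re:rad_outward} is formulated so as to apply directly at $t_* = \bar t$.
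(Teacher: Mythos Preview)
Your proposal is correct and follows essentially the same approach as the paper: both combine the momentum growth bound of Proposition \ref{prop_psb} with the radial-escape mechanism of Lemma \ref{rad_outward}, and your momentum computation $|p_t| \leq L^\star + a_\star t + \eta\bar t < L^\star + (a_\star+\eta)t$ is exactly the paper's. Your version is simply more explicit---you spell out the verification that $\Psi(t,q)=(W\ast\mu_t^\e)(q)$ satisfies the bound \eqref{condition-on-Lip(Psi)}, you set up the dichotomy and the time $t_\#$ carefully, and you invoke Remark \ref{re:rad_outward} (together with the second-derivative argument and $u'(L^\star)\le 0$) to handle the exit step, whereas the paper compresses all of this into ``follows immediately from Lemma \ref{rad_outward} and Proposition \ref{prop_psb}.''
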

\begin{proof}
This follows immediately from Lemma \ref{rad_outward} and Proposition \ref{prop_psb}. Indeed if the space marginal never exceeds $L^\star$, then by Proposition \ref{prop_psb} the momentum marginal remains within the stated bounds: Explicitly,  as long as $q_{s'} \in B_{L^\star}$ for all $s' \in [\bar t,s]$, the corresponding momentum satisfies
\[ |p_s| \leq L^\star + \bar t(a_\star + \eta) + (s - \bar t) a_\star < L^\star + s(a_\star + \eta)\]
and hence $(p_s, q_s) \in \bar \Omega_{L^\star}(s)$.  It follows that the only available exit is via the \emph{position} space marginal and hence no possibility of return, by Lemma 2.1.
\end{proof}

\begin{prop} \label{mmm}
Let $T > 0$ and let $(\mu_t)_{t \in [0, T]}$ be as obtained in Theorem \ref{ett} and let us denote by $\mathbb M_t$ the total mass at time $t$:  $\mathbb M_t = \mu_t(\mathbb R^D)$.  Then $\mathbb M_t$ is monotone non--increasing in $t$.
\end{prop}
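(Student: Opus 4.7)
The plan is to transfer mass--monotonicity from the $\e>0$ systems (where it is immediate from the representation formula) to the vague limit $\mu_t$, with the main hurdle being that vague convergence only guarantees $\mu_t(\mathbb R^D) \leq \liminf_k \mu_t^{\e_k}(\mathbb R^D)$, which goes the wrong way for a direct argument on total mass.  The remedy is to localize everything to the compact no--return regions $\bar\Omega_{L^\star}(\cdot)$ of Definition \ref{defn_psb}, on which the dynamical hypothesis explicitly forbids returns from infinity.

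First I would fix $0\leq s<t\leq T$ and $\delta>0$, and, since $\mu_0\in\mathcal M_{\alpha_0}$, pick $L_0^\star$ in the no--return sequence of Hypothesis \ref{dyn_hyp} large enough that $\mu_0(\bar\Omega_{L_0^\star}(0)^c)<\delta$.  For any $L^\star\geq L_0^\star$ in that sequence and any $\e>0$ I would establish
\[ \mu_t^\e(\bar\Omega_{L^\star}(t)) \;\leq\; \mu_s^\e(\bar\Omega_{L^\star}(s)) + \delta. \]
The starting point is the $s$--to--$t$ version of the pullback of Lemma \ref{rep_form}, which follows from the flow semigroup property:
\[ \int\varphi\,d\mu_t^\e \;=\; \int \varphi(Y_t^\e(y))\,R_{s,t}^\e(y)\,d\mu_s^\e(y), \]
with $Y_t^\e$ the flow from $s$ to $t$ and $R_{s,t}^\e\leq 1$ the residual depreciation.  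This yields $\mu_t^\e(\bar\Omega_{L^\star}(t))\leq \mu_s^\e(E)$ for $E:=(Y_t^\e)^{-1}(\bar\Omega_{L^\star}(t))$, and I would split $E$ as $(E\cap\bar\Omega_{L^\star}(s))\cup(E\setminus\bar\Omega_{L^\star}(s))$: the first piece is bounded by $\mu_s^\e(\bar\Omega_{L^\star}(s))$, and the ``invading mass'' piece $E\setminus\bar\Omega_{L^\star}(s)$ is the technical crux.

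For the invading piece, writing any $y\in E\setminus\bar\Omega_{L^\star}(s)$ as $y=X_s^\e(x_0)$ through the $\mu_0$--representation, I would show that $x_0\notin\bar\Omega_{L_0^\star}(0)$.  Indeed, if $|p_0|,|q_0|\leq L_0^\star\leq L^\star$, then since $|q_t|\leq L^\star$ by $(p_t,q_t)\in\bar\Omega_{L^\star}(t)$, Lemma \ref{rad_outward} applied at the ring $|q^\star_L|=L^\star$ forbids any excursion past $L^\star$, so $|q_\tau|\leq L^\star$ on $[0,t]$; Proposition \ref{prop_psb} then gives $|p_s|\leq L_0^\star+a_\star s\leq L^\star+(a_\star+\eta)s$ and $|q_s|\leq L^\star$, i.e., $(p_s,q_s)\in\bar\Omega_{L^\star}(s)$, contradicting $y\notin\bar\Omega_{L^\star}(s)$.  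Consequently $E\setminus\bar\Omega_{L^\star}(s)\subset X_s^\e(\bar\Omega_{L_0^\star}(0)^c)$ and the representation formula yields $\mu_s^\e(E\setminus\bar\Omega_{L^\star}(s))\leq \mu_0(\bar\Omega_{L_0^\star}(0)^c)<\delta$.

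With the finite--$\e$ estimate in hand, I would pass to $\e_k\to 0$ along the subsequence giving $\mu_t^{\e_k}\rightharpoonup\mu_t$, using the Portmanteau inequalities \eqref{set_conv}: since $\bar\Omega_{L^\star}(t)^\circ$ is open and $\bar\Omega_{L^\star}(s)$ is compact,
\[ \mu_t(\bar\Omega_{L^\star}(t)^\circ) \leq \liminf_k \mu_t^{\e_k}(\bar\Omega_{L^\star}(t)) \leq \limsup_k \mu_s^{\e_k}(\bar\Omega_{L^\star}(s)) + \delta \leq \mu_s(\bar\Omega_{L^\star}(s)) + \delta \leq \mathbb M_s + \delta. \]
Letting $L^\star\to\infty$ along the sequence of Hypothesis \ref{dyn_hyp} (so $\bar\Omega_{L^\star}(t)^\circ\uparrow\mathbb R^D$) and invoking monotone convergence gives $\mathbb M_t\leq\mathbb M_s+\delta$; sending $\delta\to 0$ concludes.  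The principal difficulty is the ``no invasion'' step, which simultaneously needs the position no--return from Lemma \ref{rad_outward} and the uniform--in--$\e$ momentum growth from Proposition \ref{prop_psb}, and it is precisely the $\eta>0$ slack built into $\bar\Omega_{L^\star}$ that allows a trajectory starting in the smaller box $\bar\Omega_{L_0^\star}(0)$ to remain inside $\bar\Omega_{L^\star}(\tau)$ for every $\tau\in[0,t]$.
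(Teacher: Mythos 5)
Your proposal is correct and follows essentially the same route as the paper: you bound the mass in the compact no--return regions $\bar\Omega_{L^\star}(\cdot)$ at the later time by the mass at the earlier time plus a $\delta$ of ``invading'' mass controlled by $\mu_0$, using the representation formula of Lemma \ref{rep_form}, the no--return mechanism of Lemma \ref{rad_outward}/Proposition \ref{prop_psb}, the monotonicity $R^\e \leq 1$, and then the two--sided portmanteau inequalities \eqref{set_conv}, exactly as in the paper's proof (its set containment \eqref{set_cont} is your ``no invasion'' step). The only differences are cosmetic: you pull back from time $s$ rather than all the way to $\mu_0$, and you send $L^\star\to\infty$ at the end instead of choosing $L^\star$ so that $\mu_{t_2}$ has mass less than $\delta$ outside $\bar\Omega_{L^\star}(t_2)$.
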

\begin{proof}
Suppose $0 \leq t_1 < t_2 \leq T$.  Let $\delta > 0$ and let us choose $L^\star > 0$ sufficiently large where $L^\star$ is as described before so that 
\[ 
\mu_0((D_{L^\star})^c) < \delta
\]
(where $D_{L^\star}$ denotes the \emph{phase space} ball of radius $L^\star$) and 
\[
\mu_{t_2}\left(     \overline{(  \bar \Omega_{L^\star}(t_2)   )^c}\right) < \delta.
\]
Now we claim that for all $\e$, 
\begin{equation}\label{mmmon}
\mu_{t_1}^\e(\bar \Omega_{L^\star}(t_1)) \geq \mu_{t_2}^\e (\bar \Omega_{L^\star}(t_2)) - \delta.
\end{equation}
In broad strokes, the argument proceeds as follows: By the representation formula in Lemma \ref{rep_form}, we can decompose (at time $t_2$) the mass in $\bar \Omega_{L^\star}(t_2)$ into that which initiated, at $t = 0$, from $\bar \Omega_{L^\star}(0) (= D_{L^\star})$ and that which did not.  The latter clearly has $\mu_0$ mass bounded by $\delta$, while the former, path--wise, must be in the set $\bar \Omega_{L^\star}(t_1)$ at time $t_1$, by the no--return condition stated in Lemma \ref{lemma_psb}.  

More explicitly, we claim that
\begin{equation}\label{set_cont}
D_{L^\star} \cap (X_{t_2}^\e)^{-1}(\bar\Omega_{L^\star}(t_2))  \subset D_{L^\star} \cap (X_{t_1}^\e)^{-1}(\bar \Omega_{L^\star}(t_1)).
\end{equation}
Indeed, suppose $(p_0, q_0) \in D_{L^\star}$ and $(p_{t_2}, q_{t_2}) \in \bar\Omega_{L^\star}(t_2)$. If $|q_t|>L^\star$ occurs for some $t \in (0,t_2)$, the no return condition would yield that $|q_{t_2}|>L^\star$, which contradicts the fact that $(p_{t_2}, q_{t_2}) \in \bar\Omega_{L^\star}(t_2)$. Consequently, $|q_t|\leq L^\star$ for all $t \in [0,t_2]$ and so, $|d p_t/dt| \leq a_\star$ for $t \in [0,t_1]$ which yields $(p_0, q_0) \in (X_{t_1}^\e)^{-1}(\bar \Omega_{L^\star}(t_1)).$ 

The claimed inequality Equation \eqref{mmmon} now basically follows from the above set containment along with the fact that $R_t^\e$ is decreasing in $t$ along each trajectory.  More precisely, let $\varphi_k \in C_0$ with values in $[0, 1]$ and support in $\pc{L^\star}{t_2}$ be functions that satisfy 
\[\mu_{t_2}^\e((\pc{L^\star}{t_2})^\circ) = \lim_{k \rightarrow \infty} \int_{\pc{L^\star}{t_2}} \varphi_k~d\mu_{t_2}^\e.\]  
(Such functions are readily constructed.)   Then 
\[\begin{split} \int_{\pc{L^\star}{t_2}} \varphi_k~d\mu_{t_2}^\e &= \int_{(X_{t_2}^\e)^{-1} (\pc{L^\star}{t_2})} (\varphi_k ~\cdot ~R^\e_{t_2}) ~\circ~ X_{t_2}^\e~d\mu_0\\
&\leq \int_{D_{L^\star} \cap (X_{t_2}^\e)^{-1} (\pc{L^\star}{t_2}) } (\varphi_k~ \cdot ~R_{t_2}^\e) ~\circ~ X_{t_2}^\e~d\mu_0 + \delta\end{split}\]
by our choice of $D_{L^\star}$.  Invoking the set containment in Equation \eqref{set_cont}, the above inequality can be continued as 
\[ \int_{\pc{L^\star}{t_2}} \varphi_k~d\mu_{t_2}^\e\leq \int_{D_{L^\star} \cap (X_{t_1}^\e)^{-1} (\bar \Omega_{L^\star}(t_1))} (\varphi_k ~\cdot~ R_{t_2}^\e)~ \circ ~X_{t_2}^\e~d\mu_0 + \delta.\]
Since $R_{t_2}^\e \circ X_{t_2}^\e \leq R_{t_1}^\e \circ X_{t_1}^\e$ the above becomes 
\[ \int_{\pc{L^\star}{t_2}} \varphi_k~d\mu_{t_2}^\e\leq \int_{D_{L^\star} \cap (X_{t_1}^\e)^{-1} (\bar \Omega_{L^\star}(t_1))} (\varphi_k \circ X_{t_2}^\e)  ~\cdot~ (R_{t_1}^\e\circ X_{t_1}^\e)~d\mu_0 + \delta.\]
Next we observe that pushing forward to time $t_1$, the second term in the product in the integrand together with $d\mu_0$ becomes $d\mu_{t_1}^\e$, the set of integration becomes $\pc{L^\star}{t_1} \cap X_{t_1}^\e(D_{L^\star})$, and the integrand becomes $\varphi_k \circ X_{t_2 - t_1}$:
\[ \int_{\pc{L^\star}{t_2}} \varphi_k~d\mu_{t_2}^\e\leq \int_{\pc{L^\star}{t_1} \cap X_{t_1}^\e(D_{L^\star})} (\varphi_k \circ X_{t_2 - t_1})~d\mu_{t_1}^\e + \delta  \leq \mu_{t_1}^\e(\pc{L^\star}{t_1}) + \delta,\] 
where the last inequality follows from the fact that $\varphi_k \leq 1$.  Taking $k$ to infinity, we conclude 
\[ \mu_{t_2}^\e((\pc{L^\star}{t_2})^\circ) \leq \mu_{t_1}^\e(\pc{L^\star}{t_1}) + \delta. \]

Since the above holds for all $\e$, we have by Equation \eqref{set_conv} and the choice of $L^\star$ that 
\[\begin{split}  \mathbb M_{t_1} &\geq \mu_{t_1} (\bar \Omega_{L^\star}(t_1)) \geq \limsup_\e \mu_{t_1}^\e(\pc{L^\star}{t_1})
\geq \limsup_\e \mu_{t_2}^\e((\pc{L^\star}{t_2})^\circ) - \delta\\
&\geq \liminf_\e \mu_{t_2}^\e((\pc{L^\star}{t_2})^\circ) - \delta \geq \mu_{t_2}((\pc{L^\star}{t_2})^\circ) - \delta \geq \mathbb M_{t_2} - 2\delta \end{split}\]
and the desired monotonicity follows by taking $\delta$ to zero.
\end{proof}


Now we introduce a more quantitative version of the dynamical condition which can be understood as a requirement that the external potential diverges sufficiently fast near infinity:

\begin{hyp}\label{quant_dyn_cond} 
As in Section \ref{d}, we consider dynamics given by the Hamiltonian 
\begin{equation}\label{ham_recall}H(p, q,t) = \frac{1}{2} |p|^2 + \Phi(q)+ \Psi(t,q),\end{equation}
where $|\nabla \Psi(t, q)| < B, \forall t \geq 0, q \in \mathbb R^d$ (so that in particular we may take, e.g., $\Psi (t, q) = (W * \mu_t^\e)(q_t)$).  Recall that Hypothesis \ref{dyn_hyp} concerned the existence of spherically symmetric bounding potentials $\Upsilon$ such that $\Upsilon (q)< \Upsilon(q_L^\star)$ for all $|q|> |q^\star_L|$.  

Here we are concerned with \emph{pairs} of position space rings $L$ and $\ell(L)$ such that $\ell(L) < L$ (not necessarily adjacent). Let $t > 0$ be essentially arbitrary and let us consider trajectories of particles operating under $\Upsilon$--dynamics which 
exit $B_L$ at time $t$, having at some earlier time exited $B_\ell$
$$
\tilde E_{L}(t) :=  \{q_s\mid q_t \in \partial B_L,
\hspace{.1 cm}
 q_{t'} \in B_{\ell(L)} \mbox{~~for some~~} t' < t.
\}
$$
Obviously, for some $t$'s, the sets $\tilde E_{L}(t)$ are non--empty.  For $\tilde E_{L}(t) \neq \emptyset$ we may define
$$
\tilde \vartheta_{L}(t)  =  \sup\{\tau\mid
q(s)\in \tilde E_{L}(t), |q_{t + \tau}| < \infty
\}.
$$
And, if $\tilde E_{L}(t) = \emptyset$ -- e.g., if $\Upsilon$ is very repulsive and $t$ is too large -- then we may, for connivence, define $\tilde \vartheta_{L}(t) = 0$.  Finally we define
$$
\tilde \tau_L   =  \sup_{t}\tilde \vartheta_{L}(t).
$$


\emph{We take as a hypothesis the existence of a sequence $(L, \ell(L))$ with $\ell(L) \rightarrow \infty$ such that}
\[ \lim_{L \rightarrow \infty} \tilde \tau_L = 0.\]
\end{hyp}

\begin{remark}
Following along the lines of the discussions in Remark \ref{DSN}, it is readily derived that if the bounding potential $\Upsilon$ satisfies power law upper and lower bounds of the form 
\[ D_2 |q|^{d_2} \leq |\Upsilon| \leq D_1 |q|^{d_1}\]
with $d_{1}$, $d_{2}$ larger than 2
then, if Hypothesis \ref{dyn_hyp} is satisfied, then the stronger Hypothesis \ref{quant_dyn_cond} also holds. 
\end{remark}

\begin{prop}\label{poof}
Let $t > 0$ be arbitrary and consider Hamiltonian dynamics according to Equation \eqref{ham_recall}.  Suppose Hypothesis \ref{quant_dyn_cond} holds.  Let us define the \emph{phase space} ``escape times'' $\tau_{L}$
by analogy to the above with $B_{L}$'s etc., replaced by the appropriate $\bar{\Omega}_{L}$'s providing us with (untilded) versions of $\tilde E_{L}(t)$ and $\tilde \vartheta_{L}(t)$.

Then we have
\[ \lim \tau_L \rightarrow 0 ~~~\mbox{as~~~} L \rightarrow \infty.\]
\end{prop}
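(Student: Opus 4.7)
The plan is to reduce the claim to the position-space statement already postulated in Hypothesis~\ref{quant_dyn_cond}. The key geometric observation is that the phase-space regions $\bar\Omega_L(t) = B_{L+(a_\star+\eta)t} \times B_L$ are designed precisely so that Lemma~\ref{lemma_psb} applies: a trajectory operating under the dynamics of Theorem~\ref{exist_def_ODE} can exit $\bar\Omega_L(t)$ only through the position-space boundary $\{|q|=|q^\star_L|\}$, since the momentum radius is growing at exactly the maximal rate at which momenta can accumulate while the position component stays in $B_{L}$ (Proposition~\ref{prop_psb}). Thus the momentum marginal plays no role in the exit, and the problem is effectively one-dimensional in $|q|$.

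Concretely, I would take a trajectory $X_s=(p_s,q_s)$ contributing to $\vartheta_L(t)$, i.e.\ one satisfying $X_t \in \partial \bar\Omega_L(t)$ with $X_{t'} \in \bar\Omega_{\ell(L)}$ for some $t'<t$. By the previous paragraph the first condition forces $|q_t| = |q^\star_L|$, while the containment $\bar\Omega_{\ell(L)} \subset \mathbb R^d \times B_{|q^\star_{\ell(L)}|}$ forces $|q_{t'}| \leq |q^\star_{\ell(L)}|$. Moreover, $s\mapsto q_s$ evolves under the single-particle Hamiltonian $H(p,q,s) = \tfrac{1}{2}|p|^2 + \Phi(q) + \Psi(s,q)$ with $\Psi(s,q) := (W\ast\mu_s)(q)$, whose gradient is uniformly bounded because $W \in C^2_c$ and the $\mu_s$ have total mass at most one; in particular Equation~\eqref{condition-on-Lip(Psi)} is satisfied. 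Hence, after identifying $|q^\star_L|$ and $|q^\star_{\ell(L)}|$ with the exit/entrance radii used in Hypothesis~\ref{quant_dyn_cond}, the position marginal $q_\cdot$ lies in $\tilde E_L(t)$.

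Since the blow-up time of $|q_{t+\tau}|$ for the phase-space trajectory coincides with that of its position marginal, this set inclusion gives $\vartheta_L(t) \leq \tilde\vartheta_L(t)$ for each admissible $t$. Taking suprema yields $\tau_L \leq \tilde\tau_L$, and invoking Hypothesis~\ref{quant_dyn_cond} completes the proof. The only obstacle I anticipate is bookkeeping: one must ensure that the sequence of no-return radii from Hypothesis~\ref{dyn_hyp} used to define the $\bar\Omega_L$'s can be matched with the pair $(L,\ell(L))$ from Hypothesis~\ref{quant_dyn_cond}. Since the position marginal of $\bar\Omega_{L}$ is exactly $B_{|q^\star_L|}$ and $|q^\star_L| \to \infty$, this is a routine identification rather than a genuine difficulty, and the hypothesis itself is what provides the existence of such a sequence.
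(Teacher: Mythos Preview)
The first half of your argument --- that by Lemma~\ref{lemma_psb} any trajectory exiting $\bar\Omega_L(t)$ does so through the position-space face $\{|q|=|q^\star_L|\}$ --- is exactly what the paper uses. However, there is a genuine gap in the second half. The sets $\tilde E_L(t)$ and the quantity $\tilde\vartheta_L(t)$ in Hypothesis~\ref{quant_dyn_cond} are defined for trajectories evolving under the \emph{bounding} potential $\Upsilon$, not under the actual Hamiltonian $H(p,q,t)=\tfrac12|p|^2+\Phi(q)+\Psi(t,q)$. Thus the position marginal $q_\cdot$ of an actual trajectory is \emph{not} an element of $\tilde E_L(t)$, and the inequality $\vartheta_L(t)\le\tilde\vartheta_L(t)$ cannot be obtained as a set inclusion in the way you describe.

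What is missing --- and what the paper invokes --- is the differential comparison in Equation~\eqref{e:second-der-of-|q|}, namely
\[
\frac{d^2|q|}{dt^2} > -u'(|q|),
\]
which says that the radial acceleration of the actual trajectory dominates that of the $\Upsilon$-dynamics. Once the actual trajectory has exited $B_{|q^\star_L|}$ with positive radial speed (this is what Lemma~\ref{lemma_psb} and Remark~\ref{re:rad_outward} provide), this inequality forces its subsequent radial speed --- and hence its escape to infinity --- to be at least as fast as that of the $\Upsilon$-trajectory launched from the same radius with the same radial velocity. That comparison is precisely what converts the hypothesis $\tilde\tau_L\to 0$ into the conclusion $\tau_L\to 0$. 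Your argument accounts for the geometry of the exit but not for this dynamical comparison between the two potentials; without it the link between $\vartheta_L(t)$ and $\tilde\vartheta_L(t)$ is not established.
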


\begin{proof}
This is immediate from Lemma \ref{lemma_psb} which ensures that when particles exit $\bar \Omega_L(t)$, they do so via the position space barrier and the fact (as can be seen from e.g., Equation \eqref{e:second-der-of-|q|}) that the actual radial momentum is bounded by that provided by the $\Upsilon$--dynamics.
\end{proof}

\begin{remark}
We remark that for finite $\varepsilon$, the quantity $\tau_{L}$ is a universal bound for the non--existence of trajectories that are in $\bar\Omega_{\ell}(t')$ at time $t'$ and exiting $\bar\Omega_{L}(t)$ at time $t$.  Indeed, all dynamics 
are bounded by the dynamics driven by the potential $\Upsilon$ which is, by fiat, uniform in $\varepsilon$.
\end{remark}

\begin{thm}\label{mff}
Suppose Hypothesis \ref{quant_dyn_cond} holds and suppose that $\mu_t^{\e_k} \rightharpoonup \mu_t$ as in Theorem \ref{ett}.  Then it is the case that for almost every $t \in [0, T]$,
\[\mathbb  M_t = \lim_{\e_k \rightarrow 0} \mathbb M_t^{\e_k}.\]  
More specifically the above convergence holds at all points of continuity of $\mathbb M_t$.
\end{thm}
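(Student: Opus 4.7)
The plan is to establish both $\mathbb M_t \leq \liminf_k \mathbb M_t^{\e_k}$ (for every $t$) and $\limsup_k \mathbb M_t^{\e_k} \leq \mathbb M_{t^-}$ (also for every $t$). By Proposition \ref{mmm} the map $t \mapsto \mathbb M_t$ is monotone non--increasing, so its continuity points form a set of full Lebesgue measure; at any such point $t$ one has $\mathbb M_{t^-} = \mathbb M_t$, and the two inequalities close to give the desired equality. The lower bound is routine: for $R > 0$ and $\psi_R \in C_c^\infty(\mathbb R^D)$ with $\chi_{B_R} \leq \psi_R \leq 1$, distributional convergence yields $\mu_t(B_R) \leq \int \psi_R\, d\mu_t = \lim_k \int \psi_R\, d\mu_t^{\e_k} \leq \liminf_k \mathbb M_t^{\e_k}$; sending $R \to \infty$ then gives the bound since $\mu_t$ is a finite Radon measure.

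The upper bound is the crux. Appealing to the representation formula of Lemma \ref{rep_form} (extended to $\varphi \equiv 1$ by monotone approximation with $C_c^\infty$ cutoffs), split the initial conditions according to whether they lie in $\bar\Omega_\ell(0) = B_\ell \times B_\ell$ or its complement:
\[ \mathbb M_t^{\e_k} \leq \int_{\bar\Omega_\ell(0) \cap \mathbb S_t^{\e_k}} R_t^{\e_k} \circ X_t^{\e_k}\, d\mu_0 + \mu_0\bigl(\bar\Omega_\ell(0)^c\bigr). \]
The second term is uniform in $\e_k$ and tends to $0$ as $\ell \to \infty$ since $\mu_0$ is finite. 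For the first, combine two dynamical ingredients enjoyed by trajectories starting in $\bar\Omega_\ell(0) \subset \bar\Omega_L(0)$: by Lemma \ref{lemma_psb}, such a trajectory either remains in $\bar\Omega_L(t')$ for all $t' \geq 0$ or exits via $\partial\bar\Omega_L$ at some time $\rho$ and never returns; by Hypothesis \ref{quant_dyn_cond} together with Proposition \ref{poof}, any such escaper reaches infinity within time $\tau_L$ of $\rho$, where $\tau_L \to 0$ as $L \to \infty$. Hence any trajectory originating in $\bar\Omega_\ell(0)$ that remains alive at time $t$ must satisfy $\rho \geq t - \tau_L$, and therefore lies in $\bar\Omega_L(t - \tau_L)$ at the earlier time $t - \tau_L$. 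Combined with the monotonicity $R_t^{\e_k} \leq R_{t - \tau_L}^{\e_k}$ along trajectories, one obtains
\[ \int_{\bar\Omega_\ell(0) \cap \mathbb S_t^{\e_k}} R_t^{\e_k} \circ X_t^{\e_k}\, d\mu_0 \leq \int_{(X_{t - \tau_L}^{\e_k})^{-1}(\bar\Omega_L(t - \tau_L))} R_{t - \tau_L}^{\e_k} \circ X_{t - \tau_L}^{\e_k}\, d\mu_0 = \mu_{t - \tau_L}^{\e_k}\bigl(\bar\Omega_L(t - \tau_L)\bigr), \]
the final equality being the representation formula at the earlier time.

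Taking $\limsup_k$ and using the upper semicontinuity in Equation \eqref{set_conv} (since $\bar\Omega_L(t - \tau_L)$ is closed and compact), one finds $\limsup_k \mu_{t - \tau_L}^{\e_k}(\bar\Omega_L(t - \tau_L)) \leq \mu_{t - \tau_L}(\bar\Omega_L(t - \tau_L)) \leq \mathbb M_{t - \tau_L}$; consequently $\limsup_k \mathbb M_t^{\e_k} \leq \mathbb M_{t - \tau_L} + \mu_0(\bar\Omega_\ell(0)^c)$. Letting $L \to \infty$ (so that simultaneously $\tau_L \to 0$ and $\ell = \ell(L) \to \infty$) yields $\limsup_k \mathbb M_t^{\e_k} \leq \mathbb M_{t^-}$, which at a continuity point of $\mathbb M_\bullet$ is exactly $\mathbb M_t$. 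The principal technical hurdle is the geometric step that trajectories exiting $\bar\Omega_L$ strictly before $t - \tau_L$ cannot persist to time $t$: this is precisely what the strengthened Hypothesis \ref{quant_dyn_cond} (as opposed to the qualitative Hypothesis \ref{dyn_hyp}) provides, and it is what allows the domain of integration in the pullback to be restricted to $(X_{t - \tau_L}^{\e_k})^{-1}(\bar\Omega_L(t - \tau_L))$.
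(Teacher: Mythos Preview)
Your proof is correct and follows essentially the same route as the paper's: the lower bound via weak$^\ast$ lower semicontinuity on compact sets, and the upper bound by pulling back through the representation formula of Lemma \ref{rep_form}, using the set containment
\[
\bar\Omega_\ell(0)\cap \mathbb S_t^{\e_k}\;\subset\;(X_{t-\tau_L}^{\e_k})^{-1}\bigl(\bar\Omega_L(t-\tau_L)\bigr)
\]
driven by Hypothesis \ref{quant_dyn_cond} and Proposition \ref{poof}, then invoking Equation \eqref{set_conv} and sending $L\to\infty$. The only cosmetic difference is that the paper introduces an auxiliary $\e$--dependent scale $L_\e$ (so as to stay with $C_c^\infty$ test functions throughout), whereas you extend the representation formula to $\varphi\equiv 1$ by monotone approximation and thereby avoid $L_\e$ altogether; this is a harmless streamlining and the two arguments are otherwise identical.
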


\begin{proof}
Let us denote 
\[ \mathbb M_{t}^- = \lim_{t' \nearrow t} \mathbb M_{t'},~~~\mathbb M_{t}^+ = \lim_{t' \searrow t} \mathbb M_{t'}\]
and 
\[ \mathbb M_t^\bullet = \limsup_{\e_k \rightarrow 0}~ \mathbb M_t^{\e_k},~~~\mathbb M_t^\circ = \liminf_{\e_k \rightarrow 0}~\mathbb M_t^{\e_k}. \]
It is clear that $\mathbb M_t^\circ \leq \mathbb M_t^\bullet$ and the monotonicity result of $\mathbb M_t$ established in Proposition \ref{mmm} gives that $\mathbb M_t^+ \leq \mathbb M_t^-$.  We will establish that in fact
\[ \mathbb M_t^+ \leq \mathbb M_t^\circ \leq \mathbb M_t^\bullet \leq \mathbb M_t^-\]
from which the result follows.  In particular (although (sub)subsequential limits are already guaranteed by the monotonicity of the $\mathbb M_{t}^{\varepsilon_k}$) this establishes, a.e., the existence of the limit for $\mathbb M_t^{\e_k}$ at points of continuity of $\mathbb M_t$.  We will separate the proof into two claims.

\noindent\textbf{Claim.}
$\mathbb M_t^+ \leq  \mathbb M_t^\circ$:

Let $\delta > 0$ be arbitrary, and let $L > 0$ be such that $\mu_t(\overline{D_L^c}) < \delta$, where $D_L$ denotes a \emph{phase space} ball of radius $L$.  Then, from weak$^*$ convergence (see Equation \eqref{set_conv})
\[ \mathbb M_t \leq \mu_t(D_L^\circ) + \delta \leq \liminf_{\e_k \rightarrow 0} \mu_t^{\e_k}(D_L) + \delta \leq M_t^\circ + \delta\]
and the claim follows.  
(Note that this shows that a mass convergence result is immediate in the absence of the interaction $W$, since then all trajectories $X_t^{\e}$'s are the same and the masses $\mathbb M_t^{\e}$ are monotonically \emph{increasing} as $\e \rightarrow 0$.)

\noindent\textbf{Claim.}
$\mathbb M_t^\bullet \leq \mathbb M_t^-$:

Let $\delta > 0$ and let $\ell := \ell(L)$ be from the hypothesized sequence in Hypothesis \ref{quant_dyn_cond} such that 
\[ \mu_0\left(\overline{(\pc{\ell}{0})^c}\right) < \delta. \]
Next given any $\e > 0$ we let $L_\e > 0$ be such that 
\[ \mu_t^\e\left(\overline{(\pc{L_\e}{t})^c}\right) < \delta.\]
Let us now consider the time $t - \tau_L$ with $\tau_L$ as in Proposition \ref{poof}.  Then we claim that 
\[ \mu_t^\e(\pc{L_\e}{t}) \leq \mu_{t - \tau_L}^\e(\pc{L}{t - \tau_L}) + \delta.\]
On the level of heuristics (and neglecting for the time being the (beneficial) effect of reduction in mass afforded by $R_t^\e$) the above display can be understood as follows: First we observe that by our choice of $\ell$ we may restrict attention to mass that \emph{initiated} in $D_\ell$.  Now all of this mass that is in $\pc{L}{t}$ at time $t$ is certainly in $\pc{L}{t'}$ for $t' < t$ since $L$ is a ring of no return (this is the same reasoning as used in the proof of Proposition \ref{mmm}) and in particular this applies to $t' = t - \tau_L$.  As for the mass in $\pc{L_\e}{t} \setminus \pc{L}{t}$, we note that if the representative particles had already left $\pc{L}{t - \tau_L}$ before time $t - \tau_L$ then by time $t$ they would be (well) beyond $\pc{L_\e}{t}$, by the definition of $\tau_L$; here we are specifically employing the hypothesized properties of $(\ell, L)$.  

The actual proof proceeds as follows: Let $\eta > 0$ and let $\varphi_\eta$ be a continuous function such 
$\varphi_\eta = 1$ on $\pc{L_\e}{t}$ and $\varphi_\eta  = 0$ on $\overline{\left(\pc{L_\e + \eta}{t}\right)^c}$.  We have then by the representation formula in Lemma \ref{rep_form}
\[ \begin{split}
\mu_t^\e(\pc{L_\e}{t}) & \leq \int_{\mathbb R^D} \varphi_\eta~d\mu_t^\e = \int_{\mathbb R^D} (\varphi_\eta ~\cdot~ R_t^\e)~\circ~X_t^\e~d\mu_0\\
&\leq \int_{(X_t^\e)^{-1}(\pc{L_\e + \eta}{t}) \cap D_\ell}~~ (\varphi_\eta ~\cdot~ R_t^\e)~\circ~X_t^\e~d\mu_0 ~~+~~ \delta.
\end{split}\]

Now, we claim, we have the set containment 
\[ D_\ell \cap (X_t^\e)^{-1} (\pc{L_\e + \eta}{t}) \subset D_\ell \cap (X_{t - \tau_L}^\e)^{-1} (\pc{L}{t - \tau_L}).\]
Indeed, following the reasoning in Equation \eqref{set_cont} we certainly have that the left hand side is contained in $D_\ell \cap (X_{t - \tau_L}^\e)^{-1} (\pc{L_\e + \eta}{t - \tau_L})$, so it remains to establish the stronger statement that we can shrink down to spatial scale $L$.  Suppose then that $(p_0, q_0) \in D_\ell$ and 
\[(p_{t - \tau_L}, q_{t - \tau_L}) \in \pc{L_\e + \eta}{t - \tau_L} \setminus \pc{L}{t - \tau_L}. \] 
Then the trajectory was, initially, in $\bar\Omega_{\ell}(0)$ and at some time $s'$ which is earlier than
$t - \tau_{L}$ had exited $\bar\Omega_{L}(s')$.  It follows by the definition of $\tau_{L}$ that at some point  before 
time $t$, the trajectory had ceased to exist (gone to infinity) and therefore it is certainly not in $\pc{L_\e + \eta}{t}$.

Continuing and using the representation formula from Lemma \ref{rep_form} again, we now have 
\[\begin{split} \mu_t^\e(\pc{L_\e}{t}) &\leq \int_{(X_{t - \tau_L}^\e)^{-1} (\pc{L}{t - \tau_L}) \cap D_\ell} ~~ (\varphi_\eta ~\cdot~R_t^\e)~\circ~X_t^\e~d\mu_0 + \delta\\
&\leq \int_{\pc{L}{t - \tau_L}}~~ \varphi_\eta ~d\mu_{t - \tau_L}^\e + \delta\\
&\leq \mu_{t - \tau_L}^\e(\pc{L}{t - \tau_L}) + \delta
\end{split}\]
as claimed.

Using the inequality established above and the choice of $L_{\e_k}$ we have 
\[ \mathbb M_t^{\e_k} \leq \mu_t^{\e_k}(\pc{L_{\e_k}}{t}) + \delta \leq \mu_{t - \tau_L}^{\e_k}(\pc{L}{t - \tau_L}) + 2\delta.\]
Taking the $\limsup$ and recalling again Equation \eqref{set_conv}, we  arrive at 
\[ \mathbb M_t^\bullet \leq \mu_{t - \tau_L} (\pc{L}{t - \tau_L}) + 2\delta \leq \mathbb M_{t - \tau_L} + 2\delta.\]
The result follows by taking $L$ to infinity.

\end{proof}

\section*{Acknowledgements}

The authors would like to thank IPAM for its hospitality during the Optimal Transport Workshop (Spring `08) wherein this project was conceived.  L.~C.~was supported in part by the NSF under the grant DMS--08--05486.  W.~G.~was supported in part by the NSF under the grants DMS--06--00791 and DMS--0901070.  H.~K.~L.~was supported in part by the Graduate Research Fellowship Program and the Dissertation Year Fellowship Program at UCLA and by the NSF under the grants DMS--10--04735 and DMS--08--05486.


\begin{thebibliography}{aaa}

\bibitem{wilfrid_ambrosio}
L.~Ambrosio and W.~Gangbo.
\textit{Hamiltonian ODE's in the Wasserstein Space of Probability Measures.}
Comm.~in Pure and Applied Math., \textbf{61}, 18--53 (2007). 

\bibitem{AGN}
L.~Ambrosio, N.~Gigli, and G.~Savar\'e.
\textit{Gradient Flows in Metric Spaces and the Wasserstein Spaces of Probability Measures.}
Lectures in Mathematics, ETH Z\"urich, Birkhauser, (2005).

\bibitem{EG}
L.~C.~Evans and R.~F.~Gariepy.
\textit{Measure Theory and Fine Properties of Functions.}
Studies in Advanced Mathematics, CRC Press, (1992).

\bibitem{villani_old} 
C.~Villani, 
\textit{Topics in Optimal Transportation.} Graduate Studies
in Mathematics, Volume 58, AMS (2003).

\bibitem{villani_new}
C.~Villani,
\textit{Optimal Transport, Old and New.} Springer--Verlag Berlin Heidelberg, A Series of Comprehensive Studies in Mathematics, 338 (2009).

\end{thebibliography}
\end{document}